\newtheorem{theorem}{Theorem}[section]
\newtheorem{proposition}[theorem]{Proposition}
\newtheorem{lemma}[theorem]{Lemma}
\newtheorem{cor}[theorem]{Corollary}
\newtheorem{defn}[theorem]{Definition}
\numberwithin{equation}{section}
\theoremstyle{remark}
\newtheorem{remark}{Remark}
\newcommand{\ep}{\varepsilon}
\newcommand{\al}{\alpha}
\newcommand{\pl}{\psi_\lambda}
\newcommand{\ld}{\lambda}
\newcommand{\sai}{S_\al(I)}
\newcommand{\xai}{X_\al(I)}
\begin{document}

\title{Energy concentration of the focusing energy-critical FNLS}

\author[Y. Cho]{Yonggeun Cho}
\address{Department of Mathematics, and Institute of Pure and Applied Mathematics, Chonbuk National University, Jeonju 561-756, Republic of Korea}
\email{changocho@jbnu.ac.kr}

\author[G. Hwang]{Gyeongha Hwang}
\address{Department of Mathematical Sciences, Ulsan National Institute of Science and Technology, Ulsan, 689-798, Republic of Korea}
\email{ghhwang@unist.ac.kr}

\author[Y.-S. Shim]{Yong-Sun Shim}
\address{Department of Mathematics, POSTECH, Pohang 790-784, Republic of Korea}
\email{shim@postech.ac.kr}

\thanks{2010 {\it Mathematics Subject Classification.} M35Q55, 35Q40. }
\thanks{{\it Key words and phrases.} blowup, energy concentration, focusing energy-critical FNLS, Strichartz estimate, profile decomposition, virial argument, Sobolev inequality for radial functions}
\thanks{This research was supported by Overseas Research Grants of LG Yonam Foundation.}

\begin{abstract}
We consider the fractional nonlinear Schr\"odinger equation (FNLS) with general dispersion $|\nabla|^\al$ and focusing energy-critical nonlinearities $-|u|^\frac{2\al}{d-\al}u$ and $-(|x|^{-2\al} * |u|^2) u$. By adopting Kenig-Tsutsumi \cite{mets}, Kenig-Merle \cite{keme} and Killip-Visan \cite{kv} arguments, we show the energy concentration of radial solutions near the maximal existence time. For this purpose we use Sobolev inequalities for radial functions and establish strong energy decoupling of profiles. And we also show that when the kinetic energy is confined the maximal existence time is finite for some large class of initial data satisfying the initial energy $E(\varphi)$ is less than energy of ground state $E(W_\al)$ but $\||\nabla|^\frac\al2 \varphi\|_{L^2} \ge \||\nabla|^\frac\al2 W_\al\|_{L^2}$.
\end{abstract}

\maketitle

\vspace{1cm}

\section{Introduction}

 In this paper we consider the Cauchy problem of the focusing fractional nonlinear Schr\"odinger equations:
\begin{align}\label{main eqn}\left\{\begin{array}{l}
i\partial_tu = |\nabla|^\al u - V(u) u,\;\;
\mbox{in}\;\;\mathbb{R}^{1+d} \;\;d \ge 2,\\
u(x,0) = \varphi(x)\;\; \mbox{in}\;\;\mathbb{R}^d,
\end{array}\right.
\end{align}
where $$V(u) = \left\{\begin{array}{l} |u|^\frac{2\al}{d-\al}\;\; (1 < \al < 2),\\(|x|^{-2\al}*|u|^2)\;\; (1 < \al < \min(2, \frac d2)).\end{array}\right.$$ The equation \eqref{main eqn} is of $\dot{H}^\frac\al2$-scaling invariance (so-called energy-critical). That is, if $u$ is a solution
of \eqref{main eqn}, then for any $\lambda > 0$ the scaled function $u_\lambda$, given by
$$
u_\lambda (t, x) = \lambda^{\frac d2-\frac\al2} u(\lambda^\al \,t, \lambda x ),
$$
is also a solution to \eqref{main eqn}.

The problem \eqref{main eqn} can be easily shown to be well-posedness in $C((-T_*, T^*); H_{rad}^\frac\al2)$ for $\al \in [\frac{2d}{2d-1}, 2)$ and $d > \al$ in the case of power type ($d > 2\al$ in the case of Hartree type) through the radial Strichartz estimate. See Lemma \ref{str} below for Strichartz estimate, and also see Theorem 4.10 of \cite{guwa} and Theorem 5.2 of \cite{chho} for LWP and small data GWP. Here $-T_*, T^* \in [-\infty, +\infty]$ are the maximal existence times and $X_{rad}$ denotes the Banach space $X$ of radially symmetric functions. The solution $u$ satisfies the mass and energy conservation laws: for $t \in (-T_*, T^*)$
\begin{align}\begin{aligned}\label{consv}
&\qquad m(u(t)) = \|u(t)\|^2_{L^2} = m(\varphi), \\  
&E(u(t)) =  \mathcal{K}(u(t)) + \mathcal{V}(u(t)) = E(\varphi),
\end{aligned}\end{align}
where $$\mathcal K(u) = \frac12 \int | |\nabla|^\frac\al2 u(x)|^2\,dx, \;\;\mathcal V(u) = - \frac1{\mu} \int V(u)|u|^2 dx,$$
$\mu = \frac{2d}{d-\al}$ for power type and $\mu = 4$ for Hartree type.

At this point due to the dependency on the profile which is the critical nature of \eqref{main eqn}  we do not know that $\limsup_{t \to T^*}\||\nabla|^\frac\al2 u(t)\|_{L^2}$ is infinity or not. In this paper we want to address some energy concentration phenomena for both cases. We first consider the concentration in the case of unconfined kinetic energy.
\begin{theorem}[Unconfined case]\label{conc-infty}
Let $d > \al+1$ in the case of power type ($d > 2\al$ in the case of Hartree type). Assume that $\varphi \in H^\frac\al2_{rad}$ and $u$ is the unique solution to \eqref{main eqn} in $C([0, T^*); H_{rad}^\frac\al2)$ with
$$
\limsup_{t \to T^*}\||\nabla|^\frac\al2 u(t)\|_{L^2} = +\infty.
$$
Then for any $R > 0$ we have
$$
\limsup_{t\to T^*}\||\nabla|^\frac\al2 u(t)\|_{L^2(|x| \le R)} = +\infty.
$$
Moreover, if for $t < T^*$ $u(t) \in L^\infty$, then
$$
\limsup_{t \to T^*}\|u(t)\|_{L^\infty(|x|\le R)} = +\infty.
$$
The same result holds near $-T_*$.
\end{theorem}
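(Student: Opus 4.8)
The plan is to argue by contradiction: suppose there is some $R_0 > 0$ with $\limsup_{t\to T^*}\||\nabla|^{\frac\al2}u(t)\|_{L^2(|x|\le R_0)} =: M < +\infty$, and derive a uniform bound on $\||\nabla|^{\frac\al2}u(t)\|_{L^2}$ on all of $\R^d$, contradicting the hypothesis. The mechanism is a localization estimate together with the radial Sobolev embedding mentioned in the abstract. First I would fix a smooth radial cutoff $\chi$ equal to $1$ on $|x|\le 1$ and supported in $|x|\le 2$, and set $\chi_R(x) = \chi(x/R)$. The interior contribution $\||\nabla|^{\frac\al2}(\chi_{R_0}u)\|_{L^2}$ is controlled by $M$ plus a commutator term; since $|\nabla|^{\frac\al2}$ is nonlocal for $1<\al<2$ one cannot localize it perfectly, but the commutator $[|\nabla|^{\frac\al2},\chi_{R_0}]$ maps $L^2\to L^2$ with norm $O(R_0^{-\al/2})$, so the error is controlled by the conserved mass $m(\varphi)$. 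The exterior piece is where the radial structure enters: for $|x|\ge R_0$, radial functions in $\dot H^{\frac\al2}$ satisfy the decay/Sobolev estimate $\|u\|_{L^\infty(|x|\ge R_0)} \lesssim R_0^{-(d-\al)/2}\||\nabla|^{\frac\al2}u\|_{L^2}$, and more usefully a weighted estimate that lets the exterior kinetic energy be dominated by the potential energy.

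The key step is to bound the potential energy $\mathcal V(u(t))$ — equivalently $\int V(u)|u|^2\,dx$ — in terms of the interior kinetic energy and conserved quantities. In the power case $\int |u|^{\mu}\,dx$ with $\mu = \frac{2d}{d-\al}$: split into $|x|\le R_0$ and $|x|> R_0$. On the interior, use a (sub)critical Sobolev/Gagliardo–Nirenberg inequality on the ball to bound $\int_{|x|\le R_0}|u|^{\mu}$ by $M$ and $m(\varphi)$. On the exterior, use the radial $L^\infty$ decay bound: $\int_{|x|>R_0}|u|^{\mu} \le \|u\|_{L^\infty(|x|>R_0)}^{\mu-2}\,m(\varphi) \lesssim R_0^{-(d-\al)(\mu-2)/2}\,(\sup_{t}\||\nabla|^{\frac\al2}u(t)\|_{L^2})^{\mu-2}\,m(\varphi)$, which at first glance reintroduces the full kinetic energy — so instead I would run this as a bootstrap/continuity argument in $t$, or choose $R_0$ large and absorb. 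Then from energy conservation $\||\nabla|^{\frac\al2}u(t)\|_{L^2}^2 = 2E(\varphi) - 2\mathcal V(u(t)) = 2E(\varphi) + \frac{2}{\mu}\int V(u)|u|^2$, and the right side is now bounded by a constant plus a small multiple (for $R_0$ large) of $\sup_{s\le t}\||\nabla|^{\frac\al2}u(s)\|_{L^2}^2$; taking the sup over $s\le t$ and absorbing gives a uniform bound, the contradiction. The Hartree case is analogous, estimating $(|x|^{-2\al}*|u|^2)(x)$ pointwise: for $|x|$ large the convolution is $\lesssim \||u|^2\|_{L^1} |x|^{-2\al} = m(\varphi)|x|^{-2\al}$ plus a near-diagonal piece handled by Hardy–Littlewood–Sobolev and the radial bound, again yielding smallness in $R_0$.

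For the $L^\infty$ statement, assuming $u(t)\in L^\infty$ for $t<T^*$: if $\|u(t)\|_{L^\infty(|x|\le R)}$ stayed bounded near $T^*$ for some $R>0$, then combined with the radial decay estimate $\|u(t)\|_{L^\infty(|x|\ge R)}\lesssim R^{-(d-\al)/2}\||\nabla|^{\frac\al2}u(t)\|_{L^2}$ — wait, that still involves kinetic energy, so instead I would feed the global $L^\infty$ bound back into the potential-energy estimate: $\int V(u)|u|^2 \lesssim \|u\|_{L^\infty}^{\mu-2} m(\varphi)$ (power) or $\lesssim \|u\|_{L^\infty}^2\, m(\varphi)^2$-type bounds via Young (Hartree), so a uniform $L^\infty$ bound would force a uniform kinetic energy bound through energy conservation, contradicting the hypothesis; the splitting into $|x|\le R$ and $|x|>R$ with the radial decay handling the exterior is what reduces a global $L^\infty$ bound to the interior one. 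The statement near $-T_*$ follows by time reversal $u(t)\mapsto \overline{u(-t)}$, which preserves \eqref{main eqn}.

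The main obstacle I anticipate is the nonlocality of $|\nabla|^{\frac\al2}$: unlike the $\al=2$ case one cannot cleanly localize the kinetic energy, so the commutator estimates and the precise dependence of the exterior tail on $R_0$ must be tracked carefully to ensure the absorption argument closes — in particular that the exponent $(d-\al)(\mu-2)/2$ (equivalently the restriction $d>\al+1$ in the power case, $d>2\al$ for Hartree) is exactly what is needed for the radial Sobolev inequality to give a summable/small tail. I would cite the radial Sobolev inequalities referenced in the abstract for the borderline estimates rather than re-derive them.
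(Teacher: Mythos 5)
Your overall architecture --- an interior/exterior splitting with a cutoff, a commutator bound to localize $|\nabla|^{\frac\al2}$ with error controlled by the conserved mass, a radial Sobolev bound for the tail, and energy conservation to convert the potential-energy bound back into a kinetic-energy bound --- is exactly the paper's. The genuine gap is in your exterior estimate for the power nonlinearity. You invoke the scaling-critical radial bound $\|u\|_{L^\infty(|x|\ge R_0)}\lesssim R_0^{-(d-\al)/2}\||\nabla|^{\frac\al2}u\|_{L^2}$, which yields
$$
\int_{|x|>R_0}|u|^{\mu}\,dx \;\lesssim\; R_0^{-\al}\,m(\varphi)\,\||\nabla|^{\frac\al2}u\|_{L^2}^{\frac{2\al}{d-\al}},
$$
i.e.\ a power $y^{\al/(d-\al)}$ of the full kinetic energy $y=\||\nabla|^{\frac\al2}u\|_{L^2}^2$. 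That exponent is $\ge 1$ precisely when $d\le 2\al$, which is a nonempty part of the theorem's range $d>\al+1$ (e.g.\ $\al=1.8$, $d=3$). In that regime neither of your proposed fixes works: you cannot ``choose $R_0$ large'' because $R_0$ is dictated by the contradiction hypothesis (boundedness of the localized kinetic energy on a small ball does not imply it on a larger one), and a self-bound $y\le C+\ep\, y^{\theta}$ with $\theta>1$ and no smallness in $\ep$ does not yield boundedness. Your parenthetical guess that $d>\al+1$ is ``exactly what is needed'' is not what your inequality delivers; it delivers $d>2\al$.

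The paper closes this step with a different radial inequality, the mass-weighted endpoint bound (Proposition 2 of \cite{chooz-ccm} together with real interpolation)
$$
|x|^{\frac{d-1}{2}}|u(x)|\;\le\; C\,\|u\|_{L^2}^{\frac{\al-1}{\al}}\,\||\nabla|^{\frac{\al}{2}}u\|_{L^2}^{\frac{1}{\al}},
$$
whose faster spatial decay $|x|^{-(d-1)/2}$ and reduced kinetic exponent $\frac1\al$ (paid for with conserved mass) turn the tail into $A(R,\|\varphi\|_{L^2})\,y^{\frac{1}{d-\al}}$, which is sublinear exactly when $d>\al+1$; this is where the hypothesis comes from, and the absorption then closes for every fixed $R>0$ with no bootstrap and no smallness requirement. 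Your interior treatment (Sobolev plus the $O(R^{-\al/2})$ commutator), the Hartree case (where $d>2\al$ is assumed anyway, so the powers of $y$ that arise are sublinear), the reduction of the $L^\infty$ statement to the interior ball via the same tail bound, and time reversal for $-T_*$ all match the paper once the tail estimate is repaired.
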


Next we deal with the confined case for which it is necessary to implement subtle estimate associated with ground state. The ground state of \eqref{main eqn} plays an important role. It is a unique positive radial solution of
\begin{align}\label{ell eqn}
|\nabla|^\al W - V(W) W.
\end{align}
%
%
In \cite{chelo, lieb, dass} the authors showed that any solution of the elliptic equation \eqref{ell eqn} is a constant multiple, dilation and translation of the function $W_\al(x) = C_1(1 + C_2|x|^2)^{-\frac{d-\al}2}$ which is  in $\dot H^\frac\al2$ for $0 < \al < \frac d2$, where $C_1, C_2$ depend on $d, \al$. The solution $W_\al$ is closely related to the best constant problem of the inequality
$$
\int V(u)|u|^2\,dx \le C_{d, \al} \||\nabla|^\frac\al2 u\|_{L^2}^\mu.
$$
Indeed, the maximizer $u \neq 0$ of the above inequality, that is,
\begin{align}\label{bc}
\int V(u)|u|^2\,dx = C_{d, \al} \||\nabla|^\frac\al2 u\|_{L^2}^\mu,
\end{align}
is characterized as $u = e^{i\theta}\lambda^\frac{d-\al}{2}W_\al(\lambda(x-x_0))$ for some $\theta \in [-\pi, \pi]$, $\lambda > 0$ and $x_0 \in \mathbb R^d$. See \cite{au, tal, cota} for power type. We will treat this problem for Hartree type in the appendix. Since $W_\al$ is a solution of \eqref{ell eqn}, $\||\nabla|^\frac\al2 W_\al\|_{L^2}^2 = \int V(W_\al)|W_\al|^2\,dx$. Thus $\||\nabla|^\frac\al2 W_\al\|_{L^2}^2 = C_{d,\al}^{-\frac2{\mu-2}}$.

Let us denote by $\sai$ for an interval $I$ the spaces $L_{I, x}^\frac{2(d+\al)}{d-\al}$ for power type and $L_{I}^6L_x^\frac{2d}{d-\frac{4\al}{3}}$ for Hartree type.
Then Lemma \ref{loc} below states that LWP of \eqref{main eqn} in $\dot H^\frac\al2$ holds for some $\al$ and $d$ and also shows the blowup criterion that $\|u\|_{S_\al((-T_*, 0])} = +\infty$ and $\|u\|_{S_\al([0, T^*))} = +\infty$ when $T_*, T^* < +\infty$.
Furthermore by following the arguments in \cite{keme, kv, mgz, gswz} with profile decomposition developed in Section 3 below one can readily get the following theorem.
\begin{theorem}\label{gwp}
Let $d \ge 2$, $\frac{2d}{2d-1} < \al < 2$, $\al < d \le 2\al$ for power type ($d > 2\al$  for Hartree type) and let $\varphi \in \dot H^\frac\al2_{rad}$. Assume that
$$
\sup_{t \in (-T_*, T^*)}\||\nabla|^\frac\al2 u(t)\|_{L^2} < \||\nabla|^\frac\al2 W_\al\|_{L^2}.
$$
Then $T_*, T^* = +\infty$ and $\|u\|_{S_\al(\mathbb R)} < +\infty$.
\end{theorem}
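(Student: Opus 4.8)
The plan is to run the Kenig--Merle concentration--compactness/rigidity scheme exactly as in \cite{keme, kv, mgz, gswz}, the only genuinely new inputs being the radial Strichartz estimates of Lemma \ref{str}, the radial linear profile decomposition with energy decoupling built in Section 3, and the radial Sobolev inequalities needed for the rigidity step. First, by the blowup criterion in Lemma \ref{loc} it suffices to prove $\|u\|_{S_\al(\R)}<\infty$; finiteness of this norm forces $T_*=T^*=+\infty$ and, together with the Strichartz estimates, scattering. A preliminary observation: combining the sharp inequality \eqref{bc} with $\||\nabla|^\frac\al2 W_\al\|_{L^2}^2=C_{d,\al}^{-2/(\mu-2)}$ one gets, for any $v$ in the trapped region $\theta:=\||\nabla|^\frac\al2 v\|_{L^2}/\||\nabla|^\frac\al2 W_\al\|_{L^2}<1$, the two-sided bound
\[
\Big(\tfrac12-\tfrac1\mu\Big)\||\nabla|^\frac\al2 v\|_{L^2}^2\ \le\ E(v)\ \le\ \tfrac12\||\nabla|^\frac\al2 v\|_{L^2}^2 ,
\]
so on the trapped set the (conserved) energy is strictly positive and comparable to the kinetic energy; in particular trapped data with small energy have small $\dot H^\frac\al2$ norm and hence scatter by the small-data theory in Lemma \ref{loc}.

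I would then argue by contradiction. If the theorem fails, then (using the blowup criterion to pass from a finite maximal time to an infinite $S_\al$ norm on a half-line) there exists a radial solution, trapped in the above sense, whose $S_\al$ norm is infinite on half of its lifespan; such a solution has finite energy because \eqref{bc} makes $E$ finite on $\dot H^\frac\al2_{rad}$. Hence
\[
E_c:=\inf\big\{\,E(\varphi):\ \varphi\in\dot H^\frac\al2_{rad},\ \textstyle\sup_t\||\nabla|^\frac\al2 u(t)\|_{L^2}<\||\nabla|^\frac\al2 W_\al\|_{L^2},\ \|u\|_{\sai}=\infty\text{ on a half }I\text{ of its lifespan}\,\big\}
\]
satisfies $0<E_c<\infty$. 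Choose trapped solutions $u_n$ with $\|u_n\|_{\sai}=\infty$ and $E(u_n)\to E_c$, and apply the linear profile decomposition of Section 3 to the bounded sequence $u_n(0)$ in $\dot H^\frac\al2_{rad}$: one writes $u_n(0)=\sum_{j=1}^J g_n^j\phi^j+w_n^J$ where each $g_n^j$ combines the scaling and time-translation symmetries (in the radial class there is no space-translation parameter, which is exactly why radiality is imposed), the parameters are pairwise asymptotically orthogonal, $\|e^{-it|\nabla|^\al}w_n^J\|_{S_\al(\R)}\to 0$ as $J\to\infty$ uniformly in $n$, and --- the crucial point --- both the energy and the kinetic energy decouple: $E(u_n(0))=\sum_{j}E(g_n^j\phi^j)+E(w_n^J)+o(1)$ and $\||\nabla|^\frac\al2 u_n(0)\|_{L^2}^2=\sum_{j}\||\nabla|^\frac\al2 g_n^j\phi^j\|_{L^2}^2+\||\nabla|^\frac\al2 w_n^J\|_{L^2}^2+o(1)$.

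The standard dichotomy then applies. If two or more profiles are nontrivial, kinetic-energy decoupling forces every profile to be trapped and (since by the preliminary bound every nontrivial piece carries positive energy) to have energy strictly below $E_c$; hence each associated nonlinear profile has finite $S_\al$ norm, and plugging the nonlinear profile decomposition into the perturbation/stability lemma (itself a consequence of the Strichartz estimates) gives $\|u_n\|_{\sai}<\infty$ for large $n$ --- a contradiction. So exactly one profile survives, the remainder vanishes in $\dot H^\frac\al2$, and modulo symmetries and an asymptotically vanishing error $u_n$ converges to a single nonlinear solution $v$: a critical element with $E(v)=E_c$, $\sup_t\||\nabla|^\frac\al2 v(t)\|_{L^2}\le\||\nabla|^\frac\al2 W_\al\|_{L^2}$ and $\|v\|_{S_\al}=\infty$ on a half-line. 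Running the same decomposition along a time sequence shows the orbit of $v$ is precompact in $\dot H^\frac\al2_{rad}$ modulo scaling: there is $\lambda(t)>0$ with $\{\lambda(t)^{-(d-\al)/2}v(t,\lambda(t)^{-1}\cdot)\}$ precompact.

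Finally I would rule out such a $v$ by the rigidity argument, splitting into the finite-time blowup case and the global case. If the maximal time is finite, compactness concentrates $v(t)$ in a ball of radius $\sim\lambda(t)^{-1}\to 0$, and combining this with the radial Sobolev inequality and a local-mass/truncated-virial estimate (in the spirit of Theorem \ref{conc-infty} and \cite{mets}) forces $\|v(t)\|_{L^2(|x|\le R)}\to 0$ for every $R$, hence $v\equiv 0$ --- impossible. If $v$ is global, a truncated virial/Morawetz functional adapted to $|\nabla|^\al$ is monotone up to a boundary error controlled by compactness; integrating its derivative in time forces the kinetic energy to vanish, so again $v\equiv 0$. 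Either way $\|v\|_{S_\al}=\infty$ is contradicted, and the theorem follows. I expect the main obstacle to be the \emph{strong energy decoupling} of the profiles: for the fractional operator $|\nabla|^\al$ the distinct dyadic scales do not separate as cleanly as for the Laplacian, and for the Hartree nonlinearity the nonlocal convolution makes decoupling of $\int V(u)|u|^2$ delicate; a secondary difficulty is making the fractional virial identity quantitative enough to close the global rigidity case.
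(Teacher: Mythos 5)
Your proposal is correct and follows essentially the same route the paper intends: the paper does not write out a proof of Theorem \ref{gwp} but explicitly reduces it to the Kenig--Merle/Killip--Visan concentration--compactness and rigidity scheme of \cite{keme,kv,mgz,gswz}, powered by the radial profile decomposition, the kinetic- and nonlinear-energy decoupling, and the stability lemma developed in Section 2. Your outline (coercivity of $E$ on the trapped set via \eqref{bc}, induction on energy, extraction of a critical element, precompactness modulo scaling, and rigidity via localized mass/virial arguments) is exactly that argument, and the difficulties you flag --- strong energy decoupling for the nonlocal terms and the fractional virial identity --- are indeed the points the paper singles out as the new technical inputs.
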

As a corollary one can show that $T_* = T^* = +\infty$ and $\|u\|_{S_\al(\mathbb R)} < +\infty$ if $E(\varphi) < E(W_\al)$ and $\||\nabla|^\frac\al2 \varphi\|_{L^2} < \||\nabla|^\frac\al2 W_\al\|_{L^2}$. The same result also holds for the defocusing case. The restriction $\al \in (\frac{2d}{2d-1}, 2)$ comes from the optimal range of Strichartz estimates (see Lemma \ref{str}). The condition $\al \le 2\al$ for power type is necessary to estimate perturbation like $\||\sum_j^J f_j|^\frac{2\al}{d-\al}(\sum_j^J f_j) - \sum_j^J |f_j|^\frac{2\al}{d-\al}f_j\|_{\dot H^\frac\al2}$. For this see the arguments below \eqref{app-norm}.

 At this point one may expect the sharpness of Theorem \ref{gwp} and the blowup ($\|u\|_{S_\al((-T_*, T^*))} = +\infty$) when $E(\varphi) < E(W_\al)$ and $\||\nabla|^\frac\al2 \varphi\|_{L^2} \ge \||\nabla|^\frac\al2 W_\al\|_{L^2}$. Unfortunately we do not know the complete answers. We think this is just a technical problem due to non-locality arising when treating $|\nabla|^\al$. However, in case when kinetic energy is confined we can show the energy concentration near the maximal existence time and also find some class of initial data guaranteeing the finite time blowup. We first introduce the energy concentration.
\begin{theorem}[Confined case]\label{energy-conc}
Let $d \ge 2$, $\frac{2d}{2d-1} < \al < 2$, $\al < d \le 2\al$ for power type ($d > 2\al$  for Hartree type) and let $\varphi \in \dot H^\frac\al2_{rad}$. Assume that
$$
\|u\|_{S_\al([0, T^*))} = +\infty,\quad \sup_{t \in [0, T^*)}\||\nabla|^\frac\al2 u(t)\|_{L^2} < +\infty.
$$
If $T^*$ is finite, then there exists a sequence $t_n \to T^*$ such that for any sequence $R_n \in (0, \infty)$ obeying $(T^*-t_n)^{-\frac1\al}R_n \to \infty$,
$$
\limsup_{n \to \infty} \int_{|x| \le R_n} ||\nabla|^\frac\al2 u(t_n, x)|^2\,dx \ge \||\nabla|^\frac\al2W_\al\|_{L^2}^2.
$$
The same result also holds near $-T_*$ if $T_* < +\infty$.
\end{theorem}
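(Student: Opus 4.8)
The plan is to argue by contradiction using the blow-up criterion together with the profile decomposition of Section 3, following the Kenig–Merle/Killip–Visan scheme adapted to the radial fractional setting. Suppose $T^* < \infty$. Since $\|u\|_{S_\al([0,T^*))} = +\infty$, one can select $t_n \uparrow T^*$ so that $\|u\|_{S_\al([t_n, T^*))}$ stays bounded below (indeed diverges). Consider the rescaled sequence $v_n(x) = (T^*-t_n)^{\frac{d-\al}{2\al}} u(t_n, (T^*-t_n)^{\frac1\al} x)$; this normalizes the natural scale so that the remaining lifespan becomes of order one. By hypothesis $\{v_n\}$ is bounded in $\dot H^{\al/2}_{rad}$, so after passing to a subsequence the profile decomposition gives
\begin{align*}
v_n = \sum_{j=1}^J \lambda_{j,n}^{-\frac{d-\al}{2}} \phi^j\!\big(\lambda_{j,n}^{-1} \cdot\big) + r_n^J,
\end{align*}
with $\limsup_J \limsup_n \|e^{it|\nabla|^\al} r_n^J\|_{S_\al(\mathbb R)} = 0$ and the Pythagorean decoupling of the $\dot H^{\al/2}$ norms (and, crucially, the \emph{strong energy decoupling} established in Section 3, which handles the nonlocal nonlinearity). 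The point of the rescaling is that if all profiles $\phi^j$ had kinetic energy strictly below $\|\,|\nabla|^{\al/2} W_\al\|_{L^2}$, then by Theorem \ref{gwp} each nonlinear profile is global with finite $S_\al$ norm, and the standard stability/perturbation argument (the nonlinear superposition estimate referenced below \eqref{app-norm}, whose validity is exactly why $d \le 2\al$ is imposed in the power case) would force $\|u\|_{S_\al([t_n,T^*))} < \infty$ for $n$ large — contradicting the choice of $t_n$. Hence at least one profile, say $\phi^1$, satisfies $\|\,|\nabla|^{\al/2}\phi^1\|_{L^2}^2 \ge \|\,|\nabla|^{\al/2} W_\al\|_{L^2}^2$.

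Next I would translate this lower bound on a single profile back into a concentration statement for $u(t_n)$ at physical scales. Write $\mu_n = (T^*-t_n)^{1/\al}$, so that in the original variables the first profile lives at spatial scale $\mu_n \lambda_{1,n}$. Because $u$ is radial, every profile is (essentially) centered at the origin — this is the role of radial symmetry, which eliminates the space-translation parameter in the decomposition and is what makes the radial Sobolev inequality usable. One shows $\lambda_{1,n} \lesssim 1$: if $\lambda_{1,n} \to \infty$ the profile would spread out and could not contribute to a finite-time singularity (one checks that the corresponding nonlinear profile is global), again contradicting the blow-up of $\|u\|_{S_\al}$ near $T^*$. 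With $\lambda_{1,n}$ bounded, the kinetic energy carried by $\phi^1$ sits, in the original frame, inside a ball of radius $C\mu_n\lambda_{1,n} \le C\mu_n = C(T^*-t_n)^{1/\al}$. Therefore, for any $R_n$ with $(T^*-t_n)^{-1/\al} R_n \to \infty$, the ball $\{|x| \le R_n\}$ eventually contains the bulk of $\phi^1$'s kinetic energy, and by the $\dot H^{\al/2}$-decoupling
\begin{align*}
\liminf_{n\to\infty}\int_{|x|\le R_n} \big|\,|\nabla|^{\al/2} u(t_n,x)\big|^2\,dx \ge \|\,|\nabla|^{\al/2}\phi^1\|_{L^2}^2 \ge \|\,|\nabla|^{\al/2} W_\al\|_{L^2}^2,
\end{align*}
which is the claim (with $\limsup$ in place of the stronger $\liminf$). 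The statement near $-T_*$ follows by applying the same argument to $u(-t)$, which solves an equation of the same form.

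The main obstacle is the passage from the abstract profile decomposition to the conclusion that a single profile has kinetic energy $\ge \|\,|\nabla|^{\al/2}W_\al\|_{L^2}^2$, and in particular making the perturbation/stability step rigorous: one must verify that the sum of the nonlinear evolutions of the low-energy profiles, plus the linear flow of the remainder, is a good approximate solution in $S_\al$ on $[t_n, T^*)$. This requires (i) the nonlinear estimates controlling $\big\|\,|\sum_j f_j|^{\frac{2\al}{d-\al}}(\sum_j f_j) - \sum_j |f_j|^{\frac{2\al}{d-\al}} f_j\,\big\|_{\dot H^{\al/2}}$ — the source of the $d \le 2\al$ constraint — and their Hartree analogue, and (ii) controlling the nonlocal operator $|\nabla|^{\al/2}$ acting on products, where the lack of the Leibniz rule is delicate; this is precisely where the strong energy decoupling of profiles proved in Section 3 is essential. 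A secondary technical point is the localization of $|\nabla|^{\al/2} u$ to the ball $\{|x|\le R_n\}$: since $|\nabla|^{\al/2}$ is nonlocal, truncating in space produces tails, so one must use that the profile $\phi^1$ (and $W_\al \sim (1+|x|^2)^{-(d-\al)/2}$) has enough spatial decay — together with the radial Sobolev inequality — to absorb the error, and this is where the restriction $d > \al + 1$ (for the $L^\infty$ part of Theorem \ref{conc-infty}) and the summability of the profile energies come into play.
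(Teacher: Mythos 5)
Your overall architecture (profile decomposition, isolate a bad profile via stability, localize its kinetic energy at scale $(T^*-t_n)^{1/\al}$) matches the paper's, but there is a genuine gap at the central step. You claim that if every linear profile satisfied $\||\nabla|^{\al/2}\phi^j\|_{L^2}<\||\nabla|^{\al/2}W_\al\|_{L^2}$, then Theorem \ref{gwp} would make each nonlinear profile global with finite $S_\al$ norm, and hence that some profile $\phi^1$ must have kinetic energy at least $\||\nabla|^{\al/2}W_\al\|_{L^2}^2$. But the hypothesis of Theorem \ref{gwp} is $\sup_{t\in(-T_*,T^*)}\||\nabla|^{\al/2}u(t)\|_{L^2}<\||\nabla|^{\al/2}W_\al\|_{L^2}$, a bound on the kinetic energy over the \emph{entire lifespan}, not a condition on the initial datum; a datum with kinetic energy below the threshold can perfectly well have its kinetic energy cross the threshold later (absent the variational trapping that requires $E<E(W_\al)$ as well). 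So your dichotomy does not close, and the conclusion ``some linear profile $\phi^1$ has $\||\nabla|^{\al/2}\phi^1\|^2\ge\||\nabla|^{\al/2}W_\al\|^2$'' is not obtained. The paper's route is different in exactly this respect: it first produces a bad \emph{nonlinear} profile $v_n^1$ whose $S_\al$ norm on $[0,T^*-t_n)$ diverges (via the stability Lemma \ref{stab} and the error estimates for $e_1,e_2$), and then applies the \emph{contrapositive} of Theorem \ref{gwp} to the evolution $v_n^1$ itself to extract interior times $\tau_n^{m}$ with $\||\nabla|^{\al/2}v_n^1(\tau_n^{m})\|_{L^2}\ge\||\nabla|^{\al/2}W_\al\|_{L^2}-\ep$. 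Accordingly the concentration times in the theorem are not your original $t_n$ but shifted times $t_n'=t_n+s_n+\tau_n^{m_0}$ — which is fine, since the theorem only asserts existence of some sequence.

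A second, smaller gap is the passage from the profile to the solution on a ball. The Pythagorean decoupling \eqref{profilenormortho} is a global identity and gives no lower bound for $\int_{|x|\le R_n}||\nabla|^{\al/2}u(t_n)|^2\,dx$; moreover the $j$-th piece of the decomposition is $U(t_n^j)[(h_n^j)^{-d/2+\al/2}\phi^j(\cdot/h_n^j)]$, and when $t_n^j\to\pm\infty$ the linear flow disperses the kinetic energy, so ``the profile sits in a ball of radius $C\mu_n\lambda_{1,n}$'' is unjustified as stated (as is the heuristic that $\lambda_{1,n}\lesssim 1$). The paper handles localization quantitatively: the inverse Strichartz estimate (Lemma \ref{inverse str}) applied on the short interval where $\|v_n^1\|_{S_\al}=\eta$ places a fixed amount of kinetic energy in a ball of radius $\widetilde C(T^*-t_n')^{1/\al}$, the tightness Lemma \ref{tightness} upgrades this to capture essentially all of $\||\nabla|^{\al/2}v_n^1(\tau_n^{m_0})\|_{L^2}^2$ on $|x|\le R_n$, and the transfer from $v_n^1$ to $u$ is done by the weak-convergence statement of Corollary \ref{strong dec} together with Cauchy--Schwarz, which is what produces the lower bound $(\lim\|\,|\nabla|^{\al/2}v_n^1(\tau_n^{m_0})\|_{L^2}^2-\eta^{D'})^2/\lim\|\,|\nabla|^{\al/2}v_n^1(\tau_n^{m_0})\|_{L^2}^2$. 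You would need some version of these three ingredients to make your final display rigorous.
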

The Sch\"{o}dinger case was treated by Killip and Visan in \cite{kv}. Here we adapt their arguments to fractional case with nonlinear profile approximation.
We want to mention that due to the lack of pseudo-conformal symmetry of the equation \eqref{main eqn} we could not get the similar result when the solution blowup at time infinity.

From now on we try to demonstrate some evidence of the finite time blowup. Based on the virial argument the finite time blowup was shown for mass-critical Hartree type fractional Schr\"{o}dinger equations in \cite{chkl} and for fourth order power type NLS \cite{cow}, where the mass-critical nature and radial symmetry are playing a crucial role in the proof of blowup. Those arguments cannot be applied to the power type mass-critical fractional NLS because of the lack of enough cancelation property of nonlinearity for virial argument to proceed. Since we do not know whether the kinetic energy is confined, it is hard to apply them to energy subcritical and mass supercritical or energy critical problem. However, if we are involved in energy critical problem and the energy is confined, then by using Sobolev inequality for radial functions \cite{chooz-ccm} it is plausible to establish the virial argument to get finite time blowup for both power type and Hartree type. The following theorem leads us off the finite time blowup.
\begin{theorem}\label{blow}
Let $\varphi \in H_{rad}^\frac\al2$ and $u$ be the unique solution of \eqref{main eqn} in $C([0,T^*); H_{rad}^\frac\al2)$ for the maximal existence time $T^* \in (0, +\infty]$. Suppose that $d \ge 2$, $\frac{4}{3} \le \al < 2$, $\al < d < 3\al$ for power type  and $d > 2\al+2$, $\frac{2d}{2d-1} < \al < 2$ for Hartree type. Then for any $\varphi$ satisfying that
\begin{align}
&\quad\; \||x|\sqrt{1-\Delta} \varphi\|_{L^2} + \||x|^2\varphi\|_{L^2} < + \infty,\label{ass-mo}\\
&E(\varphi) < E(W_\al),\quad \||\nabla|^\frac\al2\varphi\|_{L^2} \ge \||\nabla|^\frac\al2 W_\al\|_{L^2},\label{ass}
\end{align}
if $\sup\limits_{0 \le t < T^*}\||\nabla|^\frac\al2u(t)\|_{L^2} < +\infty$, then $T^* < \infty$.
\end{theorem}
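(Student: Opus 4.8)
The plan is to run a virial/localized-variance argument, which for a dispersion of order $\al$ requires controlling the evolution of a suitably localized second moment. Since $\al < 2$, the naive quantity $\frac{d^2}{dt^2}\int |x|^2 |u|^2\,dx$ is not directly useful (the symbol $|\xi|^\al$ is not a polynomial), so instead I would work with the truncated virial functional $M_\phi(t) = 2\,\mathrm{Im}\int \overline{u}\, \nabla\phi\cdot\nabla u\,dx$ for a radial cutoff $\phi$ that agrees with $\tfrac12|x|^2$ on a large ball and is bounded outside, and compute $\frac{d}{dt}M_\phi$. The assumption \eqref{ass-mo} guarantees that the relevant moments are finite initially, and a standard propagation-of-regularity/moment argument (differentiating the equation, using the local well-posedness in $H^{\al/2}_{rad}$) shows they stay finite on $[0,T^*)$ so the computation is legitimate. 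The point of the truncation is that all error terms are supported in $|x|\gtrsim R$, where one uses the radial Sobolev-type decay estimate from \cite{chooz-ccm} together with the \emph{confined} kinetic energy hypothesis $\sup_t \||\nabla|^{\al/2}u(t)\|_{L^2}<\infty$ and conserved mass to bound the tails uniformly; this is the step where radiality is essential.

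The main term produced by $\frac{d}{dt}M_\phi$ is (a multiple of) the Pohozaev-type quantity
\[
P(u(t)) = \al\,\mathcal K(u(t)) + \frac{d-\al}{2}\,\mu\,\mathcal V(u(t)),
\]
or more precisely its energy-normalized form. Next I would show that under the hypotheses \eqref{ass} this quantity stays strictly negative, bounded away from $0$: this is the variational part. Using the conserved energy $E(\varphi) < E(W_\al)$ together with the sharp constant $\||\nabla|^{\al/2}W_\al\|_{L^2}^2 = C_{d,\al}^{-2/(\mu-2)}$ and the relation $E(W_\al) = (\tfrac12 - \tfrac1\mu)\||\nabla|^{\al/2}W_\al\|_{L^2}^2$, one gets via the sharp Sobolev inequality \eqref{bc} an inequality of the form $f(\|\,|\nabla|^{\al/2}u(t)\|_{L^2}^2)\le E(\varphi) < E(W_\al)$ where $f$ is the scalar function whose maximum is attained exactly at $\||\nabla|^{\al/2}W_\al\|_{L^2}^2$; since the trajectory starts on the side $\||\nabla|^{\al/2}\varphi\|_{L^2}\ge\||\nabla|^{\al/2}W_\al\|_{L^2}$ and $\||\nabla|^{\al/2}u(t)\|_{L^2}$ varies continuously, it is trapped in that region, and there $P(u(t))\le -\delta < 0$ with $\delta$ depending on $E(W_\al)-E(\varphi)$ and the confinement bound. (This is the exact analogue of the Kenig--Merle trapping dichotomy, here in the confined setting.)

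Combining the two, on $[0,T^*)$ one obtains $\frac{d}{dt}M_\phi(t) \le -c\,\delta + (\text{tail errors in }|x|\gtrsim R)$. Choosing $R$ large — using the uniform tail bounds from the previous paragraph — the error is absorbed, so $\frac{d}{dt}M_\phi(t)\le -c\delta/2$, hence $M_\phi(t)\le M_\phi(0) - (c\delta/2)\,t$. But $|M_\phi(t)| \lesssim \||x|\,u(t)\|_{L^2}\,\||\nabla|^{\al/2}u(t)\|_{\dot H^{1-\al/2}}$-type bounds, or more simply $|M_\phi(t)|\lesssim_\phi \|u(t)\|_{L^2}\|\nabla u(t)\|_{L^2}$; the cleanest route is to note $M_\phi$ is bounded in terms of the conserved mass and the (assumed bounded) localized variance, so $M_\phi(t)$ cannot go to $-\infty$ if $T^*=+\infty$ while its variance-type majorant stays controlled — forcing $T^*<\infty$. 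I would organize this so that $T^*=+\infty$ directly contradicts positivity of a nonnegative quantity that $M_\phi$ controls from below.

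The step I expect to be the main obstacle is the \emph{tail control of the truncated virial error terms}: because $|\nabla|^\al$ is nonlocal, $\frac{d}{dt}M_\phi$ contains a commutator $[\,|\nabla|^\al,\phi\,]$ that does not localize cleanly the way $[\Delta,\phi]$ does, so one must estimate a nonlocal commutator and show its contribution is $o(1)$ as $R\to\infty$ uniformly in $t$. This is where the dimension restrictions ($\al<d<3\al$ for power type, $d>2\al+2$ for Hartree type) and the radial Sobolev inequality of \cite{chooz-ccm} enter: radial decay plus confined kinetic energy must beat the nonlocal kernel's tail. Making this rigorous — a Stein--Weiss / fractional Leibniz estimate for $[\,|\nabla|^\al,\phi\,]$ applied to radial functions with the kinetic norm confined — is the technical heart of the proof; everything else is the by-now-standard focusing virial/variational machinery adapted to the order-$\al$ setting.
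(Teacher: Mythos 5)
Your overall architecture (a virial functional whose time derivative is a negative-definite Pohozaev quantity under the Kenig--Merle trapping condition, plus a boundedness constraint that forces $T^*<\infty$) is the right one, and your variational step is essentially the paper's. However, there is a genuine gap at exactly the point you flag as the ``main obstacle,'' and it is not a routine verification that can be deferred: for the \emph{truncated} functional $M_\phi$ the derivative $\frac{d}{dt}M_\phi$ does not split into ``Pohozaev main term plus errors supported in $|x|\gtrsim R$.'' Because $|\nabla|^\al$ is nonlocal, the commutator $[\,|\nabla|^\al,\nabla\phi\cdot\nabla+\tfrac12\Delta\phi\,]$ produces terms involving $u$ globally --- including near the origin, where the radial Sobolev bound of \cite{chooz-ccm} gives no decay --- so ``radial decay beats the kernel tail'' does not close the estimate; extracting $\al\||\nabla|^{\al/2}u\|_{L^2}^2$ with controllable remainders from a localized virial identity for a fractional Laplacian requires substantial additional machinery (e.g.\ a Balakrishnan-type representation of $|\nabla|^\al$ and a delicate commutator analysis), none of which is supplied. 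A telltale sign that this is not the intended route is that your proposal makes no essential use of the hypothesis \eqref{ass-mo}: if one truncates, finite initial moments are unnecessary, whereas the theorem explicitly assumes them.

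The paper avoids truncation altogether. It first propagates the moments $\||x|u(t)\|_{L^2}$, $\||x||\nabla|^{\al-1}u(t)\|_{L^2}$ and $\||x|^2u(t)\|_{L^2}$ from \eqref{ass-mo} via the commutator bounds of Lemma \ref{com-lem}, and then works with the \emph{exact} weights, using the identity $x|\nabla|^\beta=|\nabla|^\beta x+\beta|\nabla|^{\beta-2}\nabla$ to compute $\frac{d}{dt}\mathcal A(u)$ for the dilation quantity $\mathcal A(u)=-\mathrm{Im}\langle u,x\cdot\nabla u\rangle$ with no truncation error at all; the trapping lemma then gives $\mathcal A(u(t))\le\mathcal A(\varphi)-\al\epsilon_0 t$. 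The contradiction comes from a second, nonnegative functional $\mathcal M(u)=\||\nabla|^{1-\al/2}(xu)\|_{L^2}^2$ whose derivative equals $2\al\mathcal A(u)$ plus error terms that are genuinely controllable (by the radial Sobolev inequality, Stein--Weiss, and fractional integration, using the confined kinetic energy and the conserved mass), yielding $\mathcal M(u(t))\le-\al^2\epsilon_0t^2+O(t^{3-\al})+O(t)+\mathcal M(\varphi)$ and hence $T^*<\infty$. If you want to salvage your plan, either carry out the nonlocal localized-virial estimate in full (a serious undertaking), or switch to the untruncated two-tier argument, which is what the moment hypothesis \eqref{ass-mo} is there for.
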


The rest of paper is organized as follows: In Section 2 we gather some preliminary lemmas necessary for the proof of confined energy concentration including the profile decomposition in energy space. In Section 3 we show the energy concentration, Theorems \ref{conc-infty} and \ref{energy-conc}. Section 4 is devoted to proving finite time blowup under energy confinement. In the last section we consider the best constant problem \eqref{bc} for Hartree equation.

\subsection*{Notations}
We will use the notations:\\
$\bullet$ $|\nabla| = \sqrt{-\Delta}$, $\dot H_r^s =
|\nabla|^{-s}L^r$, $\dot H^s=\dot H_2^s$, $H_r^s = (1 -
\Delta)^{-s/2} L^r$, $H^s = H_2^s$, $L^r = L_x^r(\mathbb R^d)$ for some  $s \in \mathbb R$ and $1 \le r \le \infty$.\\
$\bullet$ We use the following mixed-norm notations $L_I^qL^r = L_t^q(I; L_x^r(\mathbb R^d))$, $L_{I, x}^q = L_I^qL^q$ and $L_t^qL^r = L_{\mathbb R}^qL^r$.\\
$\bullet$ $\widehat f(\xi) = \int_{\mathbb R^d} e^{-ix\cdot \xi}f(x)\,dx$.\\
$\bullet$ For any dyadic number $N$ we denote frequency localization of function $f$ by $f_N$, which is defined by $\widehat{f_N}(\xi) = \widehat{P_Nf}(\xi) = \beta(\xi/N)\widehat{f}$ for a fixed Littlewood-Paley function $\beta \in C_{0, rad}^\infty$ with $\beta \widetilde \beta = \beta$ and $P_N\widetilde P_N = P_N$, where $\widetilde \beta(\xi) = \beta(\xi/2) + \beta(\xi) + \beta(2\xi)$ and $\widetilde P_N = P_{N/2} + P_N + P_{2N}$.\\
$\bullet$ As usual different positive
constants are denoted by the same letter $C$, if not specified. \\
$\bullet$ $[A, B]$ denotes the commutator $AB - BA$ for any operators $A$ and $B$ defined on suitable Banach spaces.\\
$\bullet$ $\big<u, v\big> = \int_{\mathbb R^d} u \,\overline v\, dx$ and $\big<f\,;\, g \big> = \sum_{1 \le j \le d}\big<f_j, g_j \big>$ for $f = (f_1,\cdots, f_d), g = (g_1, \cdots, g_d)$.

\section{Preliminary lemmas}
We define the linear propagator $U(t)$ of the linear equation $iu_t = |\nabla|^\al u$ with initial datum $f$. Then it is
formally given  by
\begin{align}\label{int eqn}
U (t)f = e^{-it|\nabla|^\al} f = \frac1{(2\pi)^d} \int_{\mathbb{R}^d} e^{i(x\cdot \xi - t|\xi|^\alpha)}\widehat{f}(\xi)\,d\xi.
\end{align}
We have Strichartz estimates for radial functions (see \cite{cholee} and \cite{guwa, ke}) as follows.
\begin{lemma}\label{str}
Suppose that $d \ge 2$, $\frac{2d}{2d-1} \le \al < 2$ and $f, F$ are radial. Then there hold
$$
\|U(t)f\|_{L_I^qL_x^r} \le C\|f\|_{L^2}, \quad \|\int_0^t U(t-t')F(t')\,dt'\|_{L_I^qL_x^r} \le C\|F\|_{L_I^{\widetilde q'}L_x^{\widetilde r'}}
$$
for the pairs $(q, r)$ and $(\widetilde q, \widetilde r)$ such that
$$
\frac\al{q} + \frac{d}r = \frac d2,\;\; 2 \le q, r \le \infty,\;\; (q, r) \neq (2 , \frac{4d-2}{2d-3}).
$$
\end{lemma}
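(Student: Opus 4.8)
The plan is to obtain both inequalities from the Keel--Tao machinery, with the radial hypothesis entering only through a sharpened dispersive bound, and to reduce everything to a single frequency-localized estimate. First I would note that the homogeneous inequality implies the inhomogeneous one: by the usual $TT^*$ duality the dual-admissible case is immediate, the off-diagonal pairs $(\widetilde q,\widetilde r)$ follow from the Christ--Kiselev lemma whenever $q>\widetilde{q}'$, and the remaining boundary overlap is handled by the endpoint bilinear form of \cite{ke}. For the homogeneous estimate, since the relation $\frac\al q+\frac dr=\frac d2$ is exactly the scaling condition that makes $\|U(t)f\|_{L^q_IL^r_x}\lesssim\|f\|_{L^2}$ invariant under $f\mapsto\lambda^{d/2}f(\lambda\,\cdot)$, it suffices to prove the frequency-localized bound $\|U(t)P_Nf\|_{L^q_IL^r_x}\lesssim\|P_Nf\|_{L^2}$ with a constant independent of the dyadic $N$: transferring the $N=1$ case by scaling is lossless, and the square-function estimate together with Minkowski's inequality sums the blocks back to $\|f\|_{L^2}$ precisely because $q,r\ge2$.

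For the frequency-localized bound the radial structure is decisive. Writing a radial $g$ of frequency $\sim N$ through the Fourier--Bessel representation gives
$$
U(t)P_Ng(r)=c_d\int_0^\infty e^{-it\rho^\al}\,\beta(\rho/N)\,\widehat g(\rho)\,(r\rho)^{-\frac{d-2}2}J_{\frac{d-2}2}(r\rho)\,\rho^{d-1}\,d\rho,
$$
so the kernel carries the normalized Bessel factor $(r\rho)^{-\frac{d-2}2}J_{\frac{d-2}2}(r\rho)$. Inserting the uniform bound $|J_\nu(s)|\lesssim\min(s^\nu,s^{-1/2})$ lends the kernel the extra spatial decay $(r\rho)^{-\frac{d-1}2}$ in the oscillatory regime, and a stationary-phase analysis of the remaining one-dimensional integral with phase $r\rho-t\rho^\al$, whose critical point sits at $r\sim|t|N^{\al-1}$, converts this Bessel gain into an improved radial dispersive estimate for $\|U(t)P_N\|_{L^1_{rad}\to L^\infty_{rad}}$. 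Feeding the resulting decay rate into the radial Keel--Tao interpolation yields the full claimed range $2\le q,r\le\infty$, $\frac\al q+\frac dr=\frac d2$; I would borrow the precise packaging of this step from the weighted-norm arguments of \cite{cholee,guwa}.

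The hard part, and the source of both the restriction $\al\ge\frac{2d}{2d-1}$ and the single excluded pair, is the borderline case $q=2$. A direct computation shows that $(q,r)=(2,\frac{4d-2}{2d-3})$ is exactly the pair produced by the scaling relation at $q=2$ when $\al=\frac{2d}{2d-1}$, so it is the radial analogue of the forbidden Keel--Tao endpoint: there the dispersive decay is critically slow and the summation over dyadic frequencies fails, so this pair must be excluded and the neighboring $q=2$ estimates obtained through a separate bilinear/duality argument rather than by interpolation. The other technical obstacle I expect is uniformity of the oscillatory-integral estimate across the three regimes $r\rho\lesssim1$, $r\rho\sim|t|N^\al$, and the intermediate transition, compounded by the fact that the weak convexity of $\rho\mapsto\rho^\al$ for $\al$ near $1$ degrades the stationary-phase gain; keeping the Bessel decay and the phase curvature in balance simultaneously, rather than proving each single oscillatory bound, is where the real work lies.
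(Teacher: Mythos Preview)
The paper does not give a proof of this lemma at all: it simply records the estimate and cites \cite{cholee}, \cite{guwa} and \cite{ke} for the result. Your sketch is a reasonable outline of the strategy actually carried out in those references --- frequency localize, exploit the Bessel-function representation of a radial $U(t)P_Nf$ to upgrade the dispersive decay, and feed the improved decay into the Keel--Tao machine --- so in that sense there is nothing to compare; you are reproducing the argument the paper outsources.

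One point deserves a cleaner statement, though. You say the excluded pair $(2,\tfrac{4d-2}{2d-3})$ is ``the radial analogue of the forbidden Keel--Tao endpoint'' and arises because ``the dispersive decay is critically slow and the summation over dyadic frequencies fails.'' The computation you give is right --- this pair lies on the scaling line $\frac{\al}{q}+\frac{d}{r}=\frac d2$ exactly when $\al=\frac{2d}{2d-1}$, so the exclusion is only live at the bottom of the allowed $\al$-range --- but the mechanism of failure is not a dyadic summation issue. For frequency-localized radial data the $q=2$ estimate at that pair is the genuine $L^2\to L^2$ endpoint in the abstract Keel--Tao theorem with decay exponent $\sigma=\frac{2d-1}{2}\cdot\frac{1}{\al}$, and it fails for the same structural reason the $(2,\infty)$ endpoint fails for the $2d$ Schr\"odinger equation: the bilinear form cannot be closed at the sharp corner. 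In \cite{guwa,ke} this is handled (and the necessity of the exclusion shown) directly at the level of the oscillatory integral, not by analyzing a Littlewood--Paley sum. This does not affect the correctness of your overall plan, but the heuristic you give for the obstruction is off.
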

\noindent Such pairs are said to be $\al$-admissible.

Then we have the following inverse Strichartz estimate.
\begin{lemma}\label{inverse str}
Fix $d \ge 2$ for power type and $d > 2\al$ for Hartree type. Let $f \in \dot H_{rad}^\frac\al2$ and $\eta > 0$ such that
$$
\|U(t)f\|_{\sai} \ge \eta
$$
for some interval $I \subset \mathbb R$. Then there exists $\widetilde C = \widetilde C(\||\nabla|^\frac\al2 f\|_{L^2}, \eta)$, and $J \subset I$ so that
$$
\int_{|x| \le \widetilde C|J|^\frac1\al} \Big|U(t)|\nabla|^\frac\al2 f \Big|^2\,dx \ge \widetilde{C}^{-1}\;\;\mbox{for all}\;\;t \in J.
$$
Here $\widetilde C$ does not depend on $I$ or $J$.
\end{lemma}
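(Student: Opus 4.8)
The plan is to run a Bourgain-type refined/inverse Strichartz argument for the fractional propagator, combined with the radial Sobolev inequality of \cite{chooz-ccm}. It is enough to treat the power type, $\sai=L^{\frac{2(d+\al)}{d-\al}}_{I,x}$; the Hartree type, $\sai=L^6_IL^{\frac{2d}{d-\frac{4\al}{3}}}_x$, is handled the same way, the hypothesis $d>2\al$ being exactly what keeps the auxiliary admissible pairs and the radial Sobolev exponent in their valid ranges. Write $A:=\||\nabla|^\frac\al2 f\|_{L^2}$. Since the whole statement is invariant under $f(x)\mapsto\lambda^{\frac{\al-d}{2}}f(x/\lambda)$, $I\mapsto\lambda^\al I$, it suffices to find $t_0\in I$, a radius $\rho>0$, and a subinterval $J\subset I$ with $t_0\in\overline J$, such that $\rho$ and $|J|$ are bounded above and below by positive constants depending only on $(A,\eta)$ and
$$
\inf_{t\in J}\int_{|x|\le\rho}\big||\nabla|^\frac\al2 U(t)f(x)\big|^2\,dx\ \ge\ c(A,\eta)>0;
$$
then any sufficiently large $\widetilde C=\widetilde C(A,\eta)$ (so that $\rho\le\widetilde C|J|^{1/\al}$ and $c(A,\eta)\ge\widetilde C^{-1}$) does the job, and visibly $\widetilde C$ depends on neither $I$ nor $J$. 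The case near $-T_*$ is identical.

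First I would reduce to a single frequency. Using the Littlewood--Paley square function estimate in $\sai$, Bernstein's inequality, H\"older in $t$ on $I$ (so that very low and very high frequencies contribute negligibly to $\|U(t)f\|_{\sai}$, as in Bourgain's refined Strichartz inequality), and the bound $\|U(t)f\|_{\sai}\lesssim A$ coming from Lemma~\ref{str} and Sobolev embedding, one extracts a dyadic frequency $N$ with $\|U(t)P_N f\|_{\sai}\gtrsim\delta(A,\eta)>0$; a short computation — using that at the diagonal critical exponent $\frac\al q+\frac dr=\frac d2$ one has $q=r=\frac{2(d+\al)}{d-\al}$, so that the Bernstein gain and the power of $|I|$ match — shows in addition that $N^\al|I|$ is bounded below by a constant depending on $(A,\eta)$. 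Rescaling so that $N=1$, we have $\|P_1 f\|_{L^2}\lesssim A$ and, for the rescaled interval, $|I|$ bounded below by a constant depending on $(A,\eta)$. Because $P_1 f$ has frequency $\sim1$, all $\al$-admissible Strichartz pairs become available through Bernstein, and interpolating $\|U(t)P_1 f\|_{\sai}\gtrsim\delta$ against the bounds $\|U(t)P_1 f\|_{L^q_I L^\infty_x}\lesssim A$ $(q\ge2)$ and $\|U(t)P_1 f\|_{L^\infty_{t,x}}\lesssim A$ produces $\|U(t)P_1 f\|_{L^\infty_{t,x}}\gtrsim\delta_1(A,\eta)$, hence a space-time point $(t_0,x_0)$ with $|U(t_0)P_1 f(x_0)|\ge\delta_1/2$. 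Now radiality enters: $P_1 f$, hence $U(t)P_1 f$, is radial, so by the radial Sobolev inequality (in force for our $d,\al$) and Bernstein, $|U(t)P_1 f(x)|\lesssim A|x|^{-\frac{d-1}{2}}$, forcing $|x_0|\le R_0:=C(A/\delta_1)^{\frac{2}{d-1}}$; by radiality $|U(t_0)P_1 f|\ge\delta_1/2$ on the entire sphere $|x|=|x_0|$, and since $\|\partial_t U(t)P_1 f\|_{L^\infty_x}=\||\nabla|^\al U(t)P_1 f\|_{L^\infty_x}\lesssim A$ and $\|\nabla U(t)P_1 f\|_{L^\infty_x}\lesssim A$ (Bernstein), $U(t)P_1 f$ is Lipschitz in $(t,x)$ with constant $\lesssim A$. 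Therefore, on a shell of width $r\sim\delta_1/A$ about $|x|=|x_0|$, contained in $B_{2R_0}$, and for $t$ in $J:=(t_0-r,\,t_0+r)\cap I$ (whose length is bounded below by a constant depending on $(A,\eta)$, since $|I|$ is), we get $|U(t)P_1 f|\ge\delta_1/4$, whence $\int_{B_{2R_0}}||\nabla|^\frac\al2 U(t)P_1 f|^2\,dx\gtrsim c'(A,\eta)$ for all $t\in J$ (using that $|\nabla|^\frac\al2$ acts boundedly and boundedly below on frequency-$1$ functions).

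The main obstacle is the \emph{decoupling} step: passing from the bump of $P_1 f$ to $f$ itself, i.e.\ showing $\int_{B_{2R_0}}||\nabla|^\frac\al2 U(t)f|^2\,dx\gtrsim c(A,\eta)$ on $J$. Writing $|\nabla|^\frac\al2 f=|\nabla|^\frac\al2 P_1 f+|\nabla|^\frac\al2(1-P_1)f$, the cross term on the ball cannot be discarded by a crude Cauchy--Schwarz, since $\||\nabla|^\frac\al2(1-P_1)f\|_{L^2}$ is only bounded by $A$; this is precisely the point which, in the Schr\"odinger case, is handled in Kenig--Merle \cite{keme} and Killip--Visan \cite{kv} by replacing pointwise reasoning with a weak-limit extraction. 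Concretely I would pass to a sequence realizing the above, extract the weak $\dot H^\frac\al2$-limit $\phi$ of the radial, frequency-$1$ functions $U(t_0)f(\cdot+x_0)$, and use the frequency localization together with a Rellich-type local compactness to upgrade this to strong $L^2_{\mathrm{loc}}$-convergence, which annihilates the cross term in the limit and transfers the lower bound to the genuine $|\nabla|^\frac\al2 U(t)f$; equivalently, one argues by contradiction — were no $\widetilde C=\widetilde C(A,\eta)$ to work, a limiting profile would carry vanishing local kinetic energy at every scale yet still produce a nontrivial $\sai$ norm, which is impossible. Granting this, the previous paragraph gives $\int_{|x|\le 2R_0}||\nabla|^\frac\al2 U(t)f|^2\,dx\gtrsim c(A,\eta)$ for all $t\in J$, and the choice of $\widetilde C$ finishes the proof. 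I expect essentially all of the genuine work to be in this decoupling/weak-limit step, the rest being routine fractional-Strichartz-and-Bernstein bookkeeping; a secondary technical nuisance is the non-locality of $|\nabla|^\al$, which must be accommodated if one insists on the sharp cutoff $\{|x|\le\rho\}$ rather than a smooth one.
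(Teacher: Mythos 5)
Your overall skeleton matches the paper's: a refined Strichartz argument isolates a dyadic frequency $N$ with $\|U(t)P_Nf\|_{\sai}\gtrsim\delta(A,\eta)$ and $N^\al|I|$ bounded below; interpolation against Bernstein/Strichartz bounds upgrades this to a pointwise lower bound $|U(t_0)|\nabla|^{\frac\al2}P_Nf(x_0)|\gtrsim A_0N^{d/2}$; the radial Sobolev inequality \eqref{sobo-radial} forces $|x_0|\lesssim N^{-1}$; and Lipschitz continuity of the frequency-localized evolution in $(t,x)$ spreads the lower bound over a ball of radius $\sim N^{-1}$ and a time window of length $\sim N^{-\al}$. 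All of that is consistent with the paper.

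The gap is in what you yourself flag as ``the main obstacle'': transferring the local kinetic-energy lower bound from $P_Nf$ to $f$. You assert that this requires a Kenig--Merle/Killip--Visan style weak-limit extraction and leave it at ``granting this''; but your sketch does not actually close. A contradiction-compactness argument on a sequence of putative counterexamples $f_n$ would have to produce a radius and a constant depending only on $(A,\eta)$, whereas the radius on which the limiting profile $\phi$ carries kinetic energy depends on $\phi$, i.e.\ on the sequence; moreover your appeal to ``Rellich-type local compactness from frequency localization'' does not apply to $f_n$ itself (only $P_1f_n$ is frequency-localized), so the cross term is not actually annihilated by your argument as written. More importantly, you have misidentified where the difficulty lives: no weak-limit machinery is needed. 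The paper (following \cite{kv}) disposes of this step with the elementary pointwise convexity inequality
\begin{equation*}
\big|U(t)|\nabla|^{\frac\al2}f_M\big|^2 \;\le\; C_0\,\big|U(t)|\nabla|^{\frac\al2}f\big|^2 * \beta^*_M,
\qquad \beta^*_M(x)=M^d|\widehat\beta(Mx)|,
\end{equation*}
which is just Cauchy--Schwarz against the kernel of $P_M$. Integrating over the ball $\{|x-x_0|\le A_1M^{-1}\}$ and splitting $\beta_M^**\chi_{\{|x-x_0|\le A_1M^{-1}\}}$ into the region $|x-x_0|\le A_2A_1M^{-1}$ and its complement (where the tail of $\beta^*$ makes the contribution an arbitrarily small multiple of $\|f\|_{\dot H^{\al/2}}^2$), one obtains the lower bound for $|\nabla|^{\frac\al2}U(t)f$ on a dilated ball with explicit constants depending only on $(A,\eta)$. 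You should replace your decoupling step by this convolution bound; as it stands, the step your proof hinges on is neither proved nor provable by the route you describe without substantial additional work.
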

\begin{proof}[Proof of Lemma \ref{inverse str}]
For simplicity we only consider the Hartree type, the power type can be treated similarly to \cite{kv}. We will show that
\begin{align}\label{choice m}
\|U(t)f_M\|_{\sai} \ge C^{-1}\eta^\frac{3d}{4\al}\||\nabla|^{\frac\al2}f\|_{L^2}^{1-\frac{3d}{4\al}}
\end{align}
for some dyadic $M \ge A|I|^{-\frac1\al}$ and some (depending on $d$ and $\al$). Here $A = C\||\nabla|^\frac\al2 f\|_{L^2}^{-D_1}\eta^{D_2}$ and the constants $D_1, D_2$ will be specified later.

We assume that \eqref{choice m} is true. By Strichartz estimate we have
$$
\|U(t)f_M\|_{L_I^\frac{6(d-\al)}{d} L^\frac{2(d-\al)}{d-\frac{4\al}{3}}} \le C M^{-\frac\al2}\||\nabla|^\frac\al2 f\|_{L^2}.
$$
Combining this with \eqref{choice m}, we get by H\"{o}lder's inequality that
$$
\|U(t)f_M\|_{L_{I,x}^\infty}  \ge  C^{-1}\||\nabla|^\frac\al2 f\|_{L^2}^{1-\frac{3d^2}{4\al^2}} \eta^{\frac{3d^2}{4\al^2}} M^{\frac{d-\al}{2}}.
$$
From this with the fact that the kernel of $M^\frac\al2 |\nabla|^{-\frac\al2} \widetilde P_M$ is integrable and its value is independent of $M$ we deduce that
$$
C^{-1}\||\nabla|^\frac\al2 f\|_{L^2}^{1-\frac{3d^2}{4\al^2}}\eta^{\frac{3d^2}{4\al^2}} M^{\frac{d}{2}} \le \|(M^\frac\al2|\nabla|^{-\frac\al2}\widetilde P_M)(U(t)|\nabla|^\frac\al2 f_M)\|_{L_{I, x}^\infty} \le C\|U(t)|\nabla|^\frac\al2 f_M\|_{L_{I, x}^\infty}.
$$
Thus there exist $(t_0, x_0) \in I \times \mathbb R^d$ so that
\begin{align}\label{lower}
|(U(t_0)|\nabla|^\frac\al2f_M(x_0)| \ge A_0M^\frac d2,
\end{align}
where $A_0 = C^{-1}\||\nabla|^\frac\al2 f\|_{L^2}^{1-\frac{3d^2}{4\al^2}}\eta^{\frac{3d^2}{4\al^2}}$. Let $A_1 = \frac{A_0}{2C\||\nabla|^\frac\al2 f\|_{L^2}}$. Then
for $|x-x_0| \le A_1M^{-1} $ and $|t-t_0| \le A_1M^{-\al}$ we have
\begin{align*}
|U(t_0)|\nabla|^\frac\al2 f_M(x_0) - U(t) |\nabla|^\frac\al2  f_M(x)| \le \frac12A_0 M^\frac d2
\end{align*}
and thus
\begin{align*}
|U(t) |\nabla|^\frac\al2  f_M(x)| \ge \frac12A_0 M^\frac d2.
\end{align*}
This yields for all $t \in J = \{t \in I : |t-t_0| \le A_1M^{-\al}\}$
$$
\int_{|x-x_0| \le A_1M^{-1}} |U(t) |\nabla|^\frac\al2 f_M(x)|^2 \,dx \ge \frac{s_d}4 A_0^2A_1^d,
$$
where $s_d$ is the measure of the unit ball.
By convexity we have $$|U(t)|\nabla|^\frac\al2 f_M|^2 \le C_0|U(t)|\nabla|^\frac\al2 f|^2 * \beta^*_M,$$ where $\beta^*_M(x) = M^d |\widehat \beta(Mx)|$ and $C_0 = \int \beta^* \,dx$. And also
\begin{align*}
\int_{|x-x_0| \le A_1M^{-1}} |U(t) |\nabla|^\frac\al2 f_M(x)|^2 \,dx \le C_0\big\langle |U(t)|\nabla|^\frac\al2 f|^2, \beta_M^* * \chi_{\{|x-x_0| \le A_1M^{-1}\}} \big\rangle.
\end{align*}
We divide inner product into two parts as follows:
$$
\big\langle |U(t)|\nabla|^\frac\al2 f|^2, \beta_M^* * \chi_{\{|x-x_0| \le A_1M^{-1}\}} \big\rangle \le I + II,
$$
where
\begin{align*}
&I = \big\langle |U(t)|\nabla|^\frac\al2 f|^2, \chi_{\{|x-x_0| \le A_2A_1M^{-1}\}}\beta_M^* * \chi_{\{|x-x_0| \le A_1M^{-1}\}} \big\rangle,\\
&II = \big\langle |U(t)|\nabla|^\frac\al2 f|^2, \chi_{\{|x-x_0| > A_2A_1M^{-1}\}}\beta_M^* * \chi_{\{|x-x_0| \le A_1M^{-1}\}} \big\rangle.
\end{align*}
Now we can find a constant $A_2 = A_2(\||\nabla|^\frac\al2f\|_{L^2}, \eta) > 1$\footnote{We may choose $A_2$ as $A_2 > 1 + \max(A_1^{-1}, \frac{C_0C_\beta A_1^{d-2}}{8A_0^2})$, where $C_\beta$ is the constant satisfying $\beta^*(x) \le C_\beta(1 + |x|)^{-1}$.} such that $C_0II \le \frac{s_d}{8}A_0^2A_1^d$. Then $I \le C_0\int_{|x-x_0| \le A_2 A_1M^{-1}} |U(t) |\nabla|^\frac\al2 f(x)|^2 \,dx$ and thus
we obtain
\begin{align}\label{lower1}
\int_{|x-x_0| \le A_2 A_1M^{-1}} |U(t) |\nabla|^\frac\al2 f(x)|^2 \,dx \ge \frac{s_d}{8C_0^2}A_0^2A_1^d.
\end{align}

On the other hand, since $f$ is radial, we use the Sobolev inequality \cite{chooz-ccm} that
\begin{align}\label{sobo-radial}
\sup_{x\in \mathbb R^d}|x|^{\frac{d-\al}2}|f(x)| \le C\||\nabla|^\frac\al2 f\|_{L^2}\;\;\mbox{a.e.}
\end{align}
 together with \eqref{lower} to get
$$
A_0(M|x_0|)^\frac{d-\al}2 \le |x_0|^\frac{d-\al}2M^{-\frac\al2}|U(t_0)|\nabla|^{\frac\al2}f_M(x_0)| \le C\||\nabla|^{\frac\al2} f\|_{L^2},
$$
which means that
\begin{align}\label{radius}
|x_0| \le A_0^{-\frac{2}{d-\al}}M^{-1}.
\end{align}
Combining \eqref{radius} with \eqref{lower1}, we have
$$
\int_{|x| \le (A_0^{-\frac2{d-\al}} + A_2A_1)M^{-1}} |U(t) |\nabla|^\frac\al2 f(x)|^2 \,dx \ge \frac{s_d}{8C_0^2}A_0^2A_1^d.
$$
Since $|I| \ge A^\al M^{-\al}$, we deduce that $|J| \ge \frac1{2}\min(A^{\al}, A_1)M^{-\al}$. By letting $\widetilde{C} = \max(2(A_0^{-\frac2{d-\al}} + A_1A_2)\max(A^{-1}, A_1^{-\frac1\al}), \frac{8C_0^2}{s_d A_0^2A_1^d})$ we get the desired result.

Now we show \eqref{choice m}. 
By Littlewood-Paley theory and H\"older's inequality we have
\begin{align*}
\eta^6 &\le \|U(t)f\|_{\sai}^6\\
& \le C\int_I \left(\int(\sum_N |U(t)f_N|^2)^\frac{d}{d-4\al/3}\,dx\right)^\frac{3d-4\al}{d}\,dt \\
&\le C\sum_{M\le N}\int_I\left( \int|U(t)f_M|^\frac{d}{d-4\al/3} |U(t)f_N|^\frac{d}{d-4\al/3}\,dx\right)^\frac{3d-4\al}{d}\,dt\\
&\le C\sum_{M\le N}\int_I \|U(t)f_M\|_{L^\frac{2d}{d-2\al}}^\frac{3d-4\al}{d}\|U(t)f_M\|_{L^\frac{2d}{d-4\al/3}}^\frac{4\al}{d}
\|U(t)f_N\|_{L^\frac{2d}{d-4\al/3}}^\frac{4\al}{d}\|U(t)f_N\|_{L^\frac{2d}{d-2\al/3}}^\frac{3d-4\al}{d}\,dt\\
&\le C\sum_{M\le N} (\frac{M}{N})^\frac{\al(3d-4\al)}{3d}(\||\nabla|^\frac\al2 f_M\|_{L^2}\||\nabla|^\frac\al2f_N\|_{L^2})^\frac{3d-4\al}{d}(\|U(t)f_M\|_{\sai} \|U(t)f_N\|_{\sai})^\frac{4\al}{d}\\
&\le C\sup_{M}\|U(t)f_M\|_{\sai}^\frac{8\al}{d}\left(\sum_{M\le N} (\frac{M}{N})^\frac\al3 \||\nabla|^\frac\al2f_M\|_{L^2}\||\nabla|^\frac\al2f_N\|_{L^2}\right)^\frac{3d-4\al}{d}\\
&\le C\sup_{M}\|U(t)f_M\|_{\sai}^\frac{8\al}{d}\||\nabla|^\frac\al2f\|_{L^2}^\frac{6d-8\al}{d}.
\end{align*}

From the Sobolev inequality it follows that
$$
\|U(t)f_M\|_{\sai} \le C(|I|M^\al)^\frac16\||\nabla|^\frac\al2f\|_{L^2}.
$$
Thus we conclude that there exists $M \ge A |I|^{-\frac1\al}$ so that \eqref{choice m} holds with $A = C\||\nabla|^\frac\al2 f\|_{L^2}^{-\frac{9d}{2\al^2}}\eta^\frac{9d}{2\al^2}$.

\end{proof}

Next we introduce the tightness of trajectories of solution. The proof is almost same as the one of Proposition 2.13 in \cite{kv} and so we omit it.
\begin{lemma}\label{tightness}
Let $v : I \times \mathbb R^d \to \mathbb C$ be a radial solution to \eqref{main eqn} with $\|v\|_{\sai} < \infty$. Suppose that
$$
\int_{|x| \le r_k} |U(t_k)|\nabla|^\frac\al2 v(\tau_k)|^2\,dx \ge \ep
$$
for some $\ep > 0$, $r_k > 0$, and bounded sequences $t_k \in \mathbb R$ and $\tau_k \in I$. Then
$$
\Big| \||\nabla|^\frac\al2 v(\tau_k)\|_{L^2}^2 - \int_{|x| \le a_k r_k}|U(t_k)|\nabla|^\frac\al2v(\tau_k)|^2\,dx \Big| \to 0
$$
for any sequence $a_k \to +\infty$.
\end{lemma}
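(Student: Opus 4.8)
The plan is to reduce the assertion to the tightness of a norm‑convergent sequence in $L^2$, after upgrading the two hypotheses into convergence statements. Put $g_k := U(t_k)|\nabla|^{\frac\al2}v(\tau_k)\in L^2_{rad}(\R^d)$; since $U(t_k)$ is unitary on $L^2$ one has $\|g_k\|_{L^2}^2=\||\nabla|^{\frac\al2}v(\tau_k)\|_{L^2}^2$, so
\[
\Big|\,\||\nabla|^{\frac\al2}v(\tau_k)\|_{L^2}^2-\int_{|x|\le a_kr_k}|g_k|^2\,dx\,\Big|=\int_{|x|>a_kr_k}|g_k|^2\,dx,
\]
and it remains to prove the right-hand side tends to $0$. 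By the standard subsequence argument it suffices to produce, from an arbitrary subsequence, a further subsequence along which this convergence holds; since $\{t_k\}$ and $\{\tau_k\}$ are bounded we may therefore assume $t_k\to t_\infty\in\R$ and $\tau_k\to\tau_\infty\in\overline I$.

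The first real step is to show $g_k\to g_\infty:=U(t_\infty)|\nabla|^{\frac\al2}v(\tau_\infty)$ strongly in $L^2$. Because $\|v\|_{\sai}<\infty$, the local theory and the blowup criterion (Lemma \ref{loc}) guarantee that $v$ extends to a $\dot H_{rad}^{\frac\al2}$-continuous function on a neighbourhood of $\tau_\infty$; in particular $v(\tau_k)\to v(\tau_\infty)$ in $\dot H^{\frac\al2}$, hence $|\nabla|^{\frac\al2}v(\tau_k)\to|\nabla|^{\frac\al2}v(\tau_\infty)$ in $L^2$. Combining this with the unitarity of $U(t_k)$ and the strong continuity of $t\mapsto U(t)$ on $L^2$ yields $g_k\to g_\infty$ in $L^2$.

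The remainder is measure theory. A norm-convergent sequence in $L^2$ is tight, i.e. for every $\eta>0$ there is $R_\eta$ with $\sup_k\int_{|x|>R_\eta}|g_k|^2\,dx\le\eta$ (split $|g_k|^2\le 2|g_\infty|^2+2|g_k-g_\infty|^2$ and choose $R_\eta$ for $g_\infty$ and for the finitely many indices with $\|g_k-g_\infty\|_{L^2}$ not yet small). This gives two things. First, $\liminf_k r_k>0$: otherwise along a further subsequence $r_k\to0$, and then $\int_{|x|\le r_k}|g_k|^2\le 2\|g_k-g_\infty\|_{L^2}^2+2\int_{|x|\le r_k}|g_\infty|^2\to0$, contradicting the hypothesis $\int_{|x|\le r_k}|g_k|^2\ge\ep$. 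Second, since $a_k\to\infty$ and $\liminf_k r_k>0$ we get $a_kr_k\to\infty$, so for $k$ large $a_kr_k>R_\eta$ and hence $\int_{|x|>a_kr_k}|g_k|^2\le\eta$; as $\eta>0$ was arbitrary, $\int_{|x|>a_kr_k}|g_k|^2\to0$, as required.

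The argument is soft and there is no genuine obstacle; the only place where $\|v\|_{\sai}<\infty$ is essential is in securing the strong $\dot H^{\frac\al2}$-continuity of $v$ up to the (possibly boundary) point $\tau_\infty$, which is precisely what keeps the kinetic energy from escaping to spatial infinity and delivers a strong $L^2$-limit for $g_k$. The one slightly delicate point to spell out is the continuous extension of $v$ past $\tau_\infty$ when $\tau_\infty\in\partial I\setminus I$: this comes from the Duhamel representation on $I$ together with the finiteness (a consequence of $\|v\|_{\sai}<\infty$) of the dual-Strichartz norm of the nonlinearity $V(v)v$ on $I$.
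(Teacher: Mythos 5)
Your proof is correct and follows essentially the same route as the paper's (which omits the argument, deferring to Proposition 2.13 of \cite{kv}): the boundedness of $t_k$ and $\tau_k$ together with the $\dot H^{\al/2}$-continuity of $v$ up to $\overline{I}$ (supplied by $\|v\|_{\sai}<\infty$ via Duhamel) makes the family $U(t_k)|\nabla|^{\al/2}v(\tau_k)$ precompact, hence tight, in $L^2$, while the concentration hypothesis forces $\liminf_k r_k>0$ so that $a_kr_k\to\infty$. Both the reduction of the conclusion to $\int_{|x|>a_kr_k}|g_k|^2\,dx\to 0$ via unitarity and the subsequence bookkeeping are handled correctly.
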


We close this section by introducing local well-posedness and stability. Since the proof is quite standard, we omit the details (for instance see \cite{keme, chho}).
\begin{lemma}\label{loc}
Let $\al \in (\frac{2d}{2d-1}, 2)$ and $\al < d < 3\al$ for power type ($d > 2\al$ for Hartree type), and let $\varphi \in \dot H_{rad}^\frac\al2$, $\|\varphi\|_{\dot H^\frac\al2} \le A$. Then there exists $\delta = \delta(A)$ such that if $\|U(t-t_0)\varphi\|_{\sai} \le \delta$, $t_0 \in I$, there exists a unique solution $u \in C(I; \dot H_{rad}^\frac\al2)$ to \eqref{main eqn} with
$$
\sup_I\|u(t)\|_{\dot H^\frac\al2} + \||\nabla|^\frac\al2 u\|_{\xai} \le C(A),\quad  \|u\|_{\sai} \le 2\delta.
$$
Here $\xai = L_I^\frac{2(d+\al)}{d-\al}L^\frac{2d(d+\al)}{d^2+\al^2}$ for power type and $L_I^6L^\frac{2d}{d-\al/3}$ for Hartree type. Moreover, $\varphi \mapsto u \in C(I; \dot H^\frac\al2)$ is Lipschitz. If $A$ is sufficiently small, then $I = \mathbb R$.
\end{lemma}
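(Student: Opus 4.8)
The plan is to solve the Duhamel integral equation
\begin{align*}
u(t) = U(t-t_0)\varphi + i\int_{t_0}^t U(t-t')\,\big[V(u)u\big](t')\,dt'
\end{align*}
by a fixed point argument in the scaling-critical Strichartz spaces, the standard Cazenave--Weissler/Kenig--Merle scheme adapted to the fractional dispersion. Writing $\Phi(u)$ for the right-hand side, I would work on the complete metric space
\begin{align*}
B = \big\{\, u : \|u\|_{\sai} \le 2\delta,\ \||\nabla|^\frac\al2 u\|_{\xai} \le C(A) \,\big\}
\end{align*}
with the \emph{weak} metric $\rho(u,v) = \|u-v\|_{\sai}$. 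The pair $(q,r)$ defining $\xai$ is $\al$-admissible, so Lemma \ref{str} gives $\||\nabla|^\frac\al2 U(t-t_0)\varphi\|_{\xai} = \|U(t-t_0)|\nabla|^\frac\al2\varphi\|_{\xai} \le C\||\nabla|^\frac\al2\varphi\|_{L^2} \le CA$, while Sobolev embedding yields $\|\cdot\|_{\sai} \le C\||\nabla|^\frac\al2\cdot\|_{\xai}$ and the hypothesis supplies $\|U(t-t_0)\varphi\|_{\sai} \le \delta$ directly.

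The heart of the argument is a pair of nonlinear estimates. Applying the dual Strichartz inequality of Lemma \ref{str} to the Duhamel term, it suffices to bound the nonlinearity in a dual norm $L_I^{\widetilde q'}L^{\widetilde r'}$. For the power type one writes $V(u)u = |u|^\frac{2\al}{d-\al}u$ and combines the fractional Leibniz (product) rule with H\"older's inequality to obtain
\begin{align*}
\big\||\nabla|^\frac\al2\big(|u|^\frac{2\al}{d-\al}u\big)\big\|_{L_I^{\widetilde q'}L^{\widetilde r'}} \le C\,\|u\|_{\sai}^\frac{2\al}{d-\al}\,\||\nabla|^\frac\al2 u\|_{\xai},
\end{align*}
the $\sai$-norm absorbing all the critical scaling and the $\xai$-norm carrying the half-derivative. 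Here the hypothesis $d < 3\al$ is exactly what forces the power $\frac{2\al}{d-\al} > 1$, so the nonlinearity is differentiable enough for the fractional Leibniz rule to apply. For the Hartree type one instead uses the Hardy--Littlewood--Sobolev inequality to control the convolution $|x|^{-2\al}*|u|^2$; the hypothesis $d > 2\al$ keeps the Riesz potential in the admissible range and yields the analogous bound with $\|u\|_{\sai}^2$ in place of $\|u\|_{\sai}^\frac{2\al}{d-\al}$.

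Granted these, choosing $\delta = \delta(A)$ small makes $\Phi$ map $B$ into itself (the constant $C(A)$ absorbing the linear contribution $CA$), and the same estimates applied to $V(u)u - V(v)v$, using the pointwise bound $\big||u|^pu-|v|^pv\big| \le C(|u|^p+|v|^p)|u-v|$ with $p = \frac{2\al}{d-\al}$, show $\Phi$ is a contraction in the metric $\rho$. The Banach fixed point theorem then gives the unique solution with the stated bounds; a standard continuity argument upgrades this to $u \in C(I;\dot H^\frac\al2)$, and radiality persists because $U(t)$ commutes with rotations and $V$ is rotation-invariant. The Lipschitz map $\varphi \mapsto u$ follows from the difference estimate run on two different data. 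Finally, when $A$ is small, Strichartz together with Sobolev gives $\|U(t)\varphi\|_{S_\al(\R)} \le C\||\nabla|^\frac\al2\varphi\|_{L^2} \le CA \le \delta$, so the smallness hypothesis is met on $I = \R$ and the solution is global.

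I expect the main obstacle to be the nonlinear estimate for the power type when $\frac{2\al}{d-\al}$ is not an even integer: the nonlinearity is then only H\"older continuous, so it cannot be differentiated directly and one must invoke a fractional chain rule valid for half-derivatives, checking that the resulting H\"older exponents and the Strichartz pairs $(\widetilde q,\widetilde r)$ are simultaneously admissible. Threading this through the admissibility constraint $\frac\al q + \frac dr = \frac d2$ together with the endpoint exclusion $(q,r)\neq(2,\frac{4d-2}{2d-3})$ is precisely the bookkeeping that the range conditions $\al\in(\frac{2d}{2d-1},2)$, $\al<d<3\al$ (respectively $d>2\al$) are engineered to permit.
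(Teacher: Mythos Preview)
Your proposal is correct and is precisely the standard contraction-mapping argument in critical Strichartz spaces that the paper has in mind; indeed, the paper does not give a proof at all but simply writes ``Since the proof is quite standard, we omit the details (for instance see \cite{keme, chho}).'' Your sketch of the fractional Leibniz/chain rule for the power case and HLS for the Hartree case, together with the bookkeeping remarks on the admissibility ranges, matches the approach in those references.
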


\begin{lemma}\label{stab}
Assume that $\al \in (\frac{2d}{2d-1}, 2)$ and $\al < d < 3\al$ for power type ($d > 2\al$ for Hartree type).
Let $I = [0, L), L \le +\infty$, and let $\widetilde u$ be radial and defined on $\mathbb R^d \times I$ be such that
$$
\sup_{t \in I}\|\widetilde u(t)\|_{\dot H^\frac\al2} \le A, \;\;\|\widetilde u\|_{\sai} \le M,\;\; \||\nabla|^\frac\al2 \widetilde u\|_{\xai} < \infty
$$
for some constants $A$ and $M$, and $\widetilde u$ verifies in the sense of integral equation
$$
i\widetilde u_t = |\nabla|^\al \widetilde u - V(\widetilde u)\widetilde u + e
$$
for some function $e$. Let $\varphi \in \dot H_{rad}^\frac\al2$ be such that $\|\varphi - \widetilde u(0)\|_{\dot H^\frac\al2} \le A'$. Suppose there exists $\varepsilon_0 = \varepsilon_0(M, A, A')$ such that if $0 < \varepsilon \le \varepsilon_0$ and
$$
\||\nabla|^\frac\al2 e\|_{Y_\al(I)} \le \varepsilon, \;\;\|U(t)(\varphi - \widetilde u(0))\|_{\sai} \le \varepsilon,
$$
then there exists a unique radial solution $u$ on $\mathbb R^d \times I$ to \eqref{main eqn} such that
$$
\|u\|_{\sai} + \sup_I\|u(t) - \widetilde u(t)\|_{\dot H^\frac\al2} \le C(M, A, A').
$$
Here $Y_\al(I) = L_{I}^2L_x^\frac{2d}{d+\al}$ for both power type and for Hartree type.
\end{lemma}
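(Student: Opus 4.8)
The plan is the long-time perturbation scheme of Kenig--Merle and Killip--Visan \cite{keme, kv}, transplanted to the dispersion $|\nabla|^\al$: I construct the genuine solution $u$ by controlling the difference $w := u - \widetilde u$ through a Strichartz bootstrap on finitely many subintervals on which $\widetilde u$ is small. Writing $\mathcal N(v) = V(v)v$ and subtracting the two integral equations, $w$ satisfies on any subinterval $I' = [t_0, t_1) \subset I$
\[
w(t) = U(t - t_0)w(t_0) + i\int_{t_0}^{t} U(t-s)\big[\mathcal N(u) - \mathcal N(\widetilde u)\big](s)\,ds + i\int_{t_0}^{t} U(t-s)e(s)\,ds,
\]
with $w(0) = \varphi - \widetilde u(0)$. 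As a preliminary reduction I would first bound $\||\nabla|^{\frac\al2}\widetilde u\|_{X_\al(I)} \le C(M, A)$: feeding the approximate equation into Duhamel and applying the radial Strichartz estimates of Lemma \ref{str} together with the nonlinear estimate below, on a subdivision of $I$ dictated by $M$, yields such a bound once $\varepsilon \le 1$. I would then split $I = [0, L)$ into $J = J(M, A, \delta)$ consecutive intervals $I_j = [t_j, t_{j+1})$ on which both $\|\widetilde u\|_{S_\al(I_j)} \le \delta$ and $\||\nabla|^{\frac\al2}\widetilde u\|_{X_\al(I_j)} \le \delta$; this is possible with $J$ finite because both norms are finite and bounded by $C(M,A)$. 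The small parameter $\delta$ is fixed at the very end, depending on $A'$.

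On a single $I_j$ I would run a contraction/continuity argument for $w$. Applying Lemma \ref{str} to the Duhamel identity gives, for the scaling-critical norm,
\[
\|w\|_{S_\al(I_j)} \lesssim b_j + \big\||\nabla|^{\frac\al2}(\mathcal N(u) - \mathcal N(\widetilde u))\big\|_{Y_\al(I_j)} + \varepsilon, \qquad b_j := \|U(t - t_j)w(t_j)\|_{S_\al(I_j)},
\]
where the inhomogeneous term is estimated through the Sobolev embedding $\|g\|_{S_\al} \lesssim \||\nabla|^{\frac\al2}g\|_{X_\al}$ (so that $\|U(t)g\|_{S_\al} \lesssim \|g\|_{\dot H^{\frac\al2}}$), and the last term uses $\||\nabla|^{\frac\al2}e\|_{Y_\al} \le \varepsilon$; the same estimates bound $\||\nabla|^{\frac\al2}w\|_{X_\al(I_j)} + \sup_{t \in I_j}\|w(t)\|_{\dot H^{\frac\al2}}$ by $\|w(t_j)\|_{\dot H^{\frac\al2}}$ plus the same nonlinear and error contributions. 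The point, as in \cite{kv}, is to separate roles: the undifferentiated quantity $b_j$ is small — it starts at $b_0 \le \|U(t)(\varphi - \widetilde u(0))\|_{S_\al(I)} \le \varepsilon$ and is propagated via Duhamel — and it is this smallness that the contraction requires, whereas the energy-level quantity $\sup_{I_j}\|w\|_{\dot H^{\frac\al2}}$ is merely bounded (by $A'$ up to controlled growth) and enters only through constants.

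The heart of the matter is the difference estimate for the nonlinearity, which for power type takes the form
\[
\big\||\nabla|^{\frac\al2}(\mathcal N(u) - \mathcal N(\widetilde u))\big\|_{Y_\al(I_j)} \lesssim \big(\|u\|_{S_\al(I_j)} + \|\widetilde u\|_{S_\al(I_j)}\big)^{\frac{2\al}{d-\al}}\||\nabla|^{\frac\al2}w\|_{X_\al(I_j)} + \big(\cdots\big)\|w\|_{S_\al(I_j)},
\]
the omitted terms carrying $|\nabla|^{\frac\al2}$ on $u$ or $\widetilde u$; for Hartree type one argues analogously after the Hardy--Littlewood--Sobolev inequality, valid since $0 < 2\al < d$. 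The H\"older book-keeping is exactly balanced: distributing the homogeneity $\frac{d+\al}{d-\al}$ of $\mathcal N$ over one factor in $X_\al$ and $\frac{2\al}{d-\al}$ factors in $S_\al$ matches both the spatial exponent $\frac{d+\al}{2d}$ and the temporal exponent $\frac12$ defining $Y_\al$. I expect this estimate to be the principal obstacle. Moving $|\nabla|^{\frac\al2}$ onto the non-polynomial nonlinearity $|u|^{\frac{2\al}{d-\al}}u$ demands the fractional Leibniz and chain rules, and the pointwise Lipschitz bound for $\mathcal N(u) - \mathcal N(\widetilde u)$ must be upgraded to the derivative level; the hypothesis $d < 3\al$ for power type, which forces the exponent $\frac{2\al}{d-\al} > 1$ and hence $\mathcal N \in C^{1,\gamma}$ with $\gamma > 0$, is precisely what keeps the difference amenable to these tools without recourse to exotic function spaces, as indicated in the introduction.

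Finally I would close the bootstrap. Fixing $\delta$ small enough (depending on $A'$) that each nonlinear term on $I_j$ carries an absorbable prefactor $(\delta + \|w\|_{S_\al(I_j)})^{\frac{2\al}{d-\al}}$, a continuity argument gives $\|w\|_{S_\al(I_j)} + \||\nabla|^{\frac\al2}w\|_{X_\al(I_j)} \lesssim b_j + \|w(t_j)\|_{\dot H^{\frac\al2}} + \varepsilon$ together with $\sup_{I_j}\|w\|_{\dot H^{\frac\al2}} \lesssim \|w(t_j)\|_{\dot H^{\frac\al2}} + \varepsilon$, so the energy difference grows by only a bounded additive amount per step, while, applying Lemma \ref{str} to the Duhamel identity once more, one derives a recursion $b_{j+1} \le C_0(b_j + \varepsilon)$ with $C_0 = C_0(M, A, A')$. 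Iterating over the finitely many $j = 0, \dots, J-1$ bounds the energy difference additively by $A' + CJ\varepsilon$ and the free-evolution errors geometrically by $b_j \le (C_0+1)^{j}\varepsilon$, so choosing $\varepsilon_0 = \varepsilon_0(M, A, A')$ small enough that $(C_0+1)^{J}\varepsilon_0$ stays below the continuity threshold validates the bootstrap at every step; the delicate part here is precisely that the accumulation of the nonlinear contributions into the propagated $S_\al$-quantity must be controlled in terms of $J$ and $\delta$. This produces $\|u\|_{S_\al(I)} \le \|\widetilde u\|_{S_\al(I)} + \|w\|_{S_\al(I)} \le C(M, A, A')$ and $\sup_I \|u - \widetilde u\|_{\dot H^{\frac\al2}} \le C(M, A, A')$. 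Existence of $u$ on all of $I$, its uniqueness, and the fact that it genuinely solves \eqref{main eqn} then follow from the local well-posedness and the blowup criterion of Lemma \ref{loc}.
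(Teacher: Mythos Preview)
The paper does not give its own proof of this lemma: it states that ``the proof is quite standard'' and omits the details, referring the reader to \cite{keme, chho}. Your proposal is precisely the standard long-time perturbation argument from those references (subdivision of $I$ into intervals on which $\widetilde u$ is small, Strichartz/bootstrap control of $w = u - \widetilde u$ via the fractional Leibniz and chain rules, and geometric iteration of the free-evolution errors $b_j$), so your approach coincides with the one the paper implicitly invokes.
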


%
Now we consider the profile decomposition in energy space. Most of them are standard and thus we only show the energy decoupling of Hartree case.

\begin{lemma}[see Theorem 1 of \cite{cox}]\label{dispest}
Let $\{t_n\}$ be sequence in $\mathbb{R}$. Suppose $\lim_{n \rightarrow \infty} |t_n| = \infty$, then for any $f \in C^\infty_0$
\[\|U(t_n)f\|_{L^p} \rightarrow 0 \text{\ as\ }n \rightarrow \infty,\]
when $p >2$.
\end{lemma}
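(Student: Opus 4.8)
The plan is to deduce the statement for every $p>2$ from the single pointwise decay
\[
\|U(t)f\|_{L^\infty}\longrightarrow 0\quad\text{as }|t|\to\infty,\qquad f\in C_0^\infty .
\]
Granting this, note that the Fourier multiplier $e^{-it|\xi|^\al}$ has modulus one, so Plancherel gives $\|U(t)f\|_{L^2}=\|f\|_{L^2}$ for every $t$. Hence for $2<p<\infty$,
\[
\|U(t_n)f\|_{L^p}\le \|U(t_n)f\|_{L^2}^{2/p}\,\|U(t_n)f\|_{L^\infty}^{1-2/p}=\|f\|_{L^2}^{2/p}\,\|U(t_n)f\|_{L^\infty}^{1-2/p}\to 0 ,
\]
while $p=\infty$ is exactly the displayed decay. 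So everything reduces to the $L^\infty$ decay.

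To prove it, write $U(t)f(x)=(2\pi)^{-d}\int_{\R^d}e^{i(x\cdot\xi-t|\xi|^\al)}\widehat f(\xi)\,d\xi$; since $f\in C_0^\infty$, $\widehat f$ is Schwartz. Fix $\ep>0$. Using rapid decay of $\widehat f$ choose $R$ with $\int_{|\xi|\ge R/2}|\widehat f|\,d\xi<\ep$, and using boundedness of $\widehat f$ choose $\delta\in(0,R)$ with $\int_{|\xi|\le 2\delta}|\widehat f|\,d\xi<\ep$. Fix a smooth partition of unity $1=\phi_{\mathrm{lo}}+\phi_{\mathrm{ann}}+\phi_{\mathrm{hi}}$ on $\R^d$ with $\operatorname{supp}\phi_{\mathrm{lo}}\subset\{|\xi|<2\delta\}$, $\operatorname{supp}\phi_{\mathrm{hi}}\subset\{|\xi|>R/2\}$ and $\operatorname{supp}\phi_{\mathrm{ann}}\subset\{\delta<|\xi|<R\}$. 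The $\phi_{\mathrm{lo}}$ and $\phi_{\mathrm{hi}}$ pieces of $U(t)f(x)$ are bounded, uniformly in $t$ and $x$, by $\int_{|\xi|\le 2\delta}|\widehat f|+\int_{|\xi|\ge R/2}|\widehat f|<2\ep$, so it remains to control
\[
I_{\mathrm{ann}}(t,x):=\int_{\R^d}e^{i(x\cdot\xi-t|\xi|^\al)}\,a(\xi)\,d\xi,\qquad a:=\phi_{\mathrm{ann}}\widehat f\in C_0^\infty,\ \operatorname{supp}a\subset\Omega:=\{\delta\le|\xi|\le R\}.
\]

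This is the heart of the matter. On $\Omega$ the phase $\Phi(\xi)=x\cdot\xi-t|\xi|^\al$ has $\operatorname{Hess}\Phi=-t\operatorname{Hess}(|\xi|^\al)$, and a direct computation shows that $\operatorname{Hess}(|\xi|^\al)$ has eigenvalues $\al(\al-1)|\xi|^{\al-2}$ in the radial direction and $\al|\xi|^{\al-2}$ with multiplicity $d-1$; since $1<\al<2$ these are all positive and, on the compact set $\Omega$ away from the origin, bounded below by a constant $c=c(\al,\delta,R)>0$. Thus $\operatorname{Hess}\Phi$ is everywhere nondegenerate on $\Omega$ with $|\det\operatorname{Hess}\Phi|\gtrsim|t|^{d}$ there, and the map $\xi\mapsto\al|\xi|^{\al-2}\xi$ being injective on $\Omega$, the equation $\nabla\Phi(\xi)=0$ has at most one solution in $\Omega$. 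By the standard multidimensional stationary phase argument — localize $\Omega$ (by a further smooth cutoff, with constants depending only on $\al,\delta,R$) into a region where $|\nabla\Phi|\gtrsim|t|$, on which repeated integration by parts in $\xi$ gives decay faster than any power of $|t|^{-1}$, and a region around the at most one nondegenerate critical point, on which the van der Corput / Morse-lemma bound applies — one obtains
\[
\sup_{x\in\R^d}|I_{\mathrm{ann}}(t,x)|\le C(\al,\delta,R,a)\,|t|^{-d/2}.
\]
Hence there is $T$ with $\sup_x|I_{\mathrm{ann}}(t,x)|<\ep$ for $|t|\ge T$, and combining the three pieces $\|U(t)f\|_{L^\infty}<3\ep$ for $|t|\ge T$. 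Since $\ep$ was arbitrary this gives $\|U(t)f\|_{L^\infty}\to0$, and the lemma follows.

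The only nontrivial step is the uniform-in-$x$ stationary phase bound for $I_{\mathrm{ann}}$, and this is precisely where $\al\neq1$ is used: for $\al=1$ the Hessian of $|\xi|$ degenerates in the radial direction and the conclusion can fail. An equally short alternative is to invoke the frequency-localized dispersive estimate $\|e^{-it|\nabla|^\al}P_Nf\|_{L^\infty}\lesssim N^{d}(1+N^\al|t|)^{-d/2}\|f\|_{L^1}$ and sum over dyadic $N$, using $\|P_Nf\|_{L^1}\lesssim 1$ for $N\le1$ and $\|P_Nf\|_{L^1}\lesssim_M N^{-M}$ for $N\ge1$ (from smoothness and decay of $f$); since $d(1-\al/2)>0$ the sum over $N\le1$ converges and, for $|t|\ge1$, the sum over $N\ge1$ converges as well, yielding $\|U(t)f\|_{L^\infty}\lesssim|t|^{-d/2}$. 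Either way, the essential ingredient is the $|t|^{-d/2}$ dispersive decay of the fractional propagator away from zero frequency.
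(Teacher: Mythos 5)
Your argument is correct. Note that the paper itself gives no proof of this lemma---it simply cites Theorem 1 of \cite{cox}---so there is nothing internal to compare against; what you have written is a self-contained substitute for that citation. Your route (reduce to $L^\infty$ decay by interpolating against the conserved $L^2$ norm, then split the frequency support into a low piece, a high piece, and a compact annulus away from the origin, killing the first two by absolute integrability of $\widehat f$ and the annulus by stationary phase) is the standard way such qualitative decay statements are proved, and the one genuinely quantitative ingredient you need---$\sup_x|I_{\mathrm{ann}}(t,x)|\lesssim|t|^{-d/2}$---is correctly justified: the eigenvalues $\al(\al-1)|\xi|^{\al-2}$ and $\al|\xi|^{\al-2}$ of $\mathrm{Hess}(|\xi|^\al)$ are indeed bounded below on the annulus precisely because $1<\al<2$, and $\xi\mapsto|\xi|^{\al-2}\xi$ is injective there, so there is at most one nondegenerate critical point. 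Two cosmetic points: you should take $\delta<R/4$ (rather than merely $\delta<R$) so that the three cutoffs with the stated supports can actually form a partition of unity; and the uniformity in $x$ of the stationary-phase constant deserves one sentence (the non-stationary region is handled by integration by parts with $|\nabla\Phi|\gtrsim|t|$ there, and near the critical point the Morse-lemma constants depend only on the uniform lower bound for the Hessian on the annulus and on finitely many derivatives of $a$). Neither affects correctness. Your alternative via the dyadic dispersive bound and summation over $N$, using $d(1-\al/2)>0$, is also valid and arguably closer in spirit to how the cited reference is usually applied.
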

%

 The profile decomposition of $U(t)$ for mass critical case was already verified for radial data in \cite{chkl2} (see also \cite{chkl3}). From that decomposition, one can easily prove the following profile decomposition for the energy critical case:
\begin{lemma}\label{main}
Let  $d \ge 2$, $\frac{2d}{2d-1} < \alpha < 2$, and $(q, r)$ be $\al$-admissible pairs with $2 < q, r <
\infty$. Suppose that
$\{u_n\}_{n \geq 1}$ is a sequence of complex-valued radial functions bounded in $\dot{H}^\frac \al2$. Then up to a subsequence, for
any $J \geq 1$, there exist a sequence of radial functions
$\{\phi^j\}_{1\leq j \leq J}\subset \dot{H}^{\frac \al2}$, $\omega_n^J \in \dot{H}^{\frac \al2}$ and a
family of parameters $(h_n^j, t_n^j)_{1 \leq j \leq J, n \geq 1}$
such that
\begin{align}\label{profileid}
u_n(x) = \sum_{1 \leq j \leq J} U(t^j_n)[(h^j_n)^{-d/2+\al/2}\phi^j(\cdot/{h^j_n})](x) + \omega^J_n(x)
\end{align}
and the following properties are satisfied:
\begin{align}\label{profilerem}
\lim_{J \rightarrow \infty} \limsup_{n \rightarrow \infty}
\||\nabla|^\frac \al2U(\cdot)
\omega^J_n\|_{L^q_tL^r_x} = 0,
\end{align} and for $j \neq k$, $(h^j_n, t^j_n)_{n \geq
1}$ and $(h^k_n, t^k_n)_{n \geq 1}$ are asymptotically orthogonal in
the sense that
\begin{align}\begin{aligned}\label{profileparaortho}
&\mbox{either}\;\; \limsup_{n \rightarrow
\infty}\left(\frac{h^j_n}{h^k_n} +
\frac{h^k_n}{h^j_n}\right) = \infty,\\
&\mbox{or}\;\; (h^j_n) = (h^k_n)\;\;\mbox{and}\;\;\limsup_{n
\rightarrow \infty}\frac{|t^j_n - t^k_n|}{(h^j_n)^\alpha} = \infty,
\end{aligned}\end{align}
and for each $J$
\begin{align}\label{profilenormortho}
\lim_{n \rightarrow \infty} \Big[\|u_n\|_{\dot{H}^\frac \al2}^2 - (\sum_{1 \leq j \leq J}\|\phi^j\|_{\dot{H}^\frac \al2}^2
+ \|\omega^J_n\|_{\dot{H}^\frac \al2}^2) \Big] = 0.
\end{align}
\end{lemma}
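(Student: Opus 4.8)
The statement to be proved is Lemma \ref{main} — the linear profile decomposition in the energy space $\dot H^{\al/2}$ for radial sequences.

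\medskip

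The plan is to \emph{reduce} the energy-critical statement to the already-established mass-critical profile decomposition of \cite{chkl2}, rather than redo the concentration-compactness induction from scratch. Given a bounded sequence $\{u_n\}$ in $\dot H^{\al/2}_{rad}$, set $v_n := |\nabla|^{\al/2} u_n$, so that $\{v_n\}$ is bounded in $L^2_{rad}$. The point is that the operator $|\nabla|^{\al/2}$ intertwines the scaling actions: if we write the mass-critical rescaling operator as $g_h f = h^{-d/2} f(\cdot / h)$, then $|\nabla|^{\al/2} \big( h^{-d/2+\al/2}\phi(\cdot/h)\big) = h^{-d/2}(|\nabla|^{\al/2}\phi)(\cdot / h)$, i.e. the energy-critical rescaling of $\phi$ corresponds under $|\nabla|^{\al/2}$ to the mass-critical rescaling of $|\nabla|^{\al/2}\phi$. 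Since $U(t)$ commutes with $|\nabla|^{\al/2}$, applying the mass-critical decomposition of \cite{chkl2} to $\{v_n\}$ yields radial profiles $\chi^j \in L^2$, parameters $(h_n^j, t_n^j)$, and remainders $\widetilde\omega_n^J$ with $v_n = \sum_{j\le J} U(t_n^j) g_{h_n^j}\chi^j + \widetilde\omega_n^J$, with the standard smallness of $\|U(\cdot)\widetilde\omega_n^J\|$ in the relevant Strichartz norm, $L^2$-orthogonality of the profiles, and asymptotic orthogonality \eqref{profileparaortho} of the parameters. Then I set $\phi^j := |\nabla|^{-\al/2}\chi^j$ and $\omega_n^J := |\nabla|^{-\al/2}\widetilde\omega_n^J$; applying $|\nabla|^{-\al/2}$ to the mass-critical decomposition and using the intertwining identity gives exactly \eqref{profileid}.

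\medskip

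Next I would verify the three claimed properties one at a time. For \eqref{profileparaortho}, nothing needs to be done: the parameters are inherited verbatim from \cite{chkl2}. For \eqref{profilenormortho}, I apply $|\nabla|^{\al/2}$ to both sides of \eqref{profileid} (or equivalently read off the $L^2$ decoupling of $\{v_n\}$ from the mass-critical decomposition), noting $\|u_n\|_{\dot H^{\al/2}} = \|v_n\|_{L^2}$, $\|\phi^j\|_{\dot H^{\al/2}} = \|\chi^j\|_{L^2}$, $\|\omega_n^J\|_{\dot H^{\al/2}} = \|\widetilde\omega_n^J\|_{L^2}$; the $L^2$ Pythagorean expansion with vanishing cross terms is precisely the mass-critical statement. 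For \eqref{profilerem}, observe $|\nabla|^{\al/2} U(t)\omega_n^J = U(t)\widetilde\omega_n^J$, so $\||\nabla|^{\al/2}U(\cdot)\omega_n^J\|_{L^q_t L^r_x} = \|U(\cdot)\widetilde\omega_n^J\|_{L^q_t L^r_x}$; I want this to tend to $0$ (after the double limit) for all $\al$-admissible $(q,r)$ with $2<q,r<\infty$. The mass-critical decomposition typically gives smallness in one distinguished Strichartz norm together with a uniform Strichartz bound on $\widetilde\omega_n^J$ and on $v_n$; the passage to all admissible exponents is then an interpolation argument: interpolate the small norm against a bounded non-endpoint Strichartz norm of $U(\cdot)\widetilde\omega_n^J$ (controlled by $\|\widetilde\omega_n^J\|_{L^2}\lesssim \sup_n\|v_n\|_{L^2}$ via Lemma \ref{str}), using that the admissible line $\frac{\al}{q}+\frac dr = \frac d2$ with $2<q,r<\infty$ is a bounded segment whose interior points are convex combinations of two other admissible points.

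\medskip

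The step I expect to be the main obstacle is \eqref{profilerem} in the generality stated — smallness of the remainder in \emph{every} non-endpoint $\al$-admissible norm, not just the single one delivered by the mass-critical construction. One has to be slightly careful because the endpoint $(2, \frac{4d-2}{2d-3})$ is excluded and the admissible segment has two open ends, so the interpolation must stay away from both endpoints; this is where having $2<q,r<\infty$ strictly is used. A secondary technical point is checking that the profiles $\phi^j = |\nabla|^{-\al/2}\chi^j$ genuinely lie in $\dot H^{\al/2}$ and that $\omega_n^J \in \dot H^{\al/2}$ — this is immediate from $\chi^j, \widetilde\omega_n^J \in L^2$ — and that radiality is preserved under $|\nabla|^{\pm \al/2}$ and $U(t)$, which it is since these are radial Fourier multipliers. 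Everything else is a direct transcription of \cite{chkl2}, which is why I would keep the write-up short and refer to that paper for the construction itself, as the excerpt already signals.
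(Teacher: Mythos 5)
Your proposal is correct and follows essentially the same route as the paper, which likewise obtains Lemma \ref{main} by reducing to the mass-critical radial profile decomposition of \cite{chkl2} via the substitution $v_n = |\nabla|^{\al/2}u_n$ and the intertwining of $|\nabla|^{\al/2}$ with the two scaling actions. Your additional care about extending the remainder smallness from the single mass-critical norm $L^{\frac{2(d+\al)}{d}}_{t,x}$ to all non-endpoint $\al$-admissible pairs by interpolation with the uniform Strichartz bound is exactly the detail the paper leaves implicit, and it is handled correctly.
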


\begin{remark}
Since the space and frequency translations do not appear in the profile decomposition due to the radial symmetry, it is possible to get the strong convergence of remainder term in $L^q_t\dot{H}_r^{\frac \al2}$ as in \eqref{profilerem} not in $L^q_tL^{\frac {rd}{d-\frac {r\al}{2}}}_x$ norm. It plays a crucial role in the proof of Theorem \ref{energy-conc}.
\end{remark}

From energy critical profile decomposition, we prove some useful corollaries.
\begin{cor}\label{strong dec}
Suppose that $\{u_n\}_{n \geq 1}$ is a sequence of complex-valued radial functions bounded in $\dot{H}^\frac \al2$. Let $\{\phi^j\}_{1\leq j \leq J}\subset \dot{H}^{\frac \al 2}$, $\omega_n^J \in \dot{H}^{\frac \al 2}$ and a family of parameters $(h_n^j, t_n^j)_{1 \leq j \leq J, n \geq 1}$ from Lemma \ref{main}. Define group operator $G_n^j$ as $G_n^j(f) = U(t^j_n)[(h^j_n)^{-d/2+\al/2}f(\cdot/{h^j_n})](x)$. Then we have
\begin{align}\begin{aligned}\label{weak-limit-profile}
&(G_n^j)^{-1}(\omega_n^J) \rightharpoonup 0 \text{\ weakly\ in\ }\dot{H}^{\frac \al2}\text{\ as\ }n\ \rightarrow \infty,\\
&(G_n^j)^{-1}(u_n) \rightharpoonup \phi^j \text{\ weakly\ in\ }\dot{H}^{\frac \al2}\text{\ as\ }n\ \rightarrow \infty.
\end{aligned}\end{align}
\end{cor}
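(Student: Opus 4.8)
The plan is to exploit that each $G_n^j$ is a \emph{unitary} operator on $\dot H^\frac\al2$: it is the composition of the dilation $D_{h_n^j}f := (h_n^j)^{-d/2+\al/2}f(\cdot/h_n^j)$, which preserves the $\dot H^\frac\al2$ norm, with the propagator $U(t_n^j)=e^{-it_n^j|\nabla|^\al}$, which is unitary on every $\dot H^s$. Hence $(G_n^j)^{-1}$ is unitary as well, the sequences $(G_n^j)^{-1}\omega_n^J$ and $(G_n^j)^{-1}u_n$ are bounded in $\dot H^\frac\al2$, and to identify their weak limits it is enough to pair against a dense class of test functions. I would use the class of $\psi$ with $\widehat\psi$ smooth and supported in a compact annulus $\{c\le|\xi|\le C\}$ about the origin, which is dense in $\dot H^\frac\al2$ because $f\mapsto|\xi|^\frac\al2\widehat f$ is an isometry onto $L^2$ and such $\widehat f$ exhaust a dense subset of $L^2$.

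The key ingredient is the operator-orthogonality statement: for $j\neq k$, $(G_n^j)^{-1}G_n^k g \rightharpoonup 0$ in $\dot H^\frac\al2$ for every $g\in\dot H^\frac\al2$. Since these operators are uniformly bounded (unitary), by density it suffices to show $\langle (G_n^j)^{-1}G_n^k g,\psi\rangle_{\dot H^\frac\al2}\to0$ for $g,\psi$ in the annulus class above. First I would record the algebraic identity $(G_n^j)^{-1}G_n^k = U(s_n)D_{\mu_n}$, where $\mu_n=h_n^k/h_n^j$ and $s_n=(t_n^k-t_n^j)(h_n^j)^{-\al}$, which follows from the commutation $D_\mu U(t)=U(\mu^\al t)D_\mu$. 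Then $\langle (G_n^j)^{-1}G_n^k g,\psi\rangle_{\dot H^\frac\al2}=\int|\xi|^\al e^{-is_n|\xi|^\al}\,\widehat{D_{\mu_n}g}(\xi)\,\overline{\widehat\psi(\xi)}\,d\xi$. In the first alternative of \eqref{profileparaortho}, $\mu_n\to0$ or $\infty$, so the Fourier support of $D_{\mu_n}g$, namely $\{c/\mu_n\le|\xi|\le C/\mu_n\}$, is eventually disjoint from the support of $\widehat\psi$ and the pairing vanishes identically for large $n$. In the second alternative, $\mu_n=1$ and $|s_n|\to\infty$, and the phase $|\xi|^\al$ has non-vanishing gradient on the fixed compact origin-avoiding support of the amplitude, so repeated integration by parts (non-stationary phase; alternatively Lemma \ref{dispest}) gives decay of arbitrary polynomial order in $|s_n|$. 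Either way the pairing tends to $0$; in particular $(G_n^j)^{-1}G_n^k\phi^k\rightharpoonup0$ in $\dot H^\frac\al2$ for each $k\neq j$.

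With these facts the corollary follows by linearity. For the first assertion I would use that, by the construction underlying Lemma \ref{main} (cf. \cite{chkl2}), $\phi^j$ is the weak $\dot H^\frac\al2$-limit of $(G_n^j)^{-1}\omega_n^{j-1}$ and $\omega_n^j=\omega_n^{j-1}-G_n^j\phi^j$, so that $(G_n^j)^{-1}\omega_n^j=(G_n^j)^{-1}\omega_n^{j-1}-\phi^j\rightharpoonup0$; then for $J\ge j$ one writes $\omega_n^J=\omega_n^j-\sum_{j<k\le J}G_n^k\phi^k$, applies $(G_n^j)^{-1}$, and discards the finitely many cross terms via the orthogonality step to obtain $(G_n^j)^{-1}\omega_n^J\rightharpoonup0$. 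For the second assertion, apply $(G_n^j)^{-1}$ to \eqref{profileid}: $(G_n^j)^{-1}u_n=\phi^j+\sum_{1\le k\le J,\,k\neq j}(G_n^j)^{-1}G_n^k\phi^k+(G_n^j)^{-1}\omega_n^J$; the middle sum converges weakly to $0$ by orthogonality and the last term by what was just proved, so $(G_n^j)^{-1}u_n\rightharpoonup\phi^j$. The only genuinely delicate point is the second alternative of the orthogonality step — the oscillatory-integral/dispersive decay as $|s_n|\to\infty$ — together with making sure the input ``$\phi^j=\mathrm{w\text-}\lim(G_n^j)^{-1}\omega_n^{j-1}$'' is legitimately available from the construction; everything else is bookkeeping.
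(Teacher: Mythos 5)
Your proof is correct, but it takes a genuinely different route from the paper's. The common core is the operator--orthogonality statement $(G_n^j)^{-1}G_n^k g\rightharpoonup 0$ for $j\neq k$, which the paper simply asserts from \eqref{profileparaortho} and which you prove directly (Fourier-support separation when the scales decouple, non-stationary phase or Lemma \ref{dispest} when the times decouple) --- that part is fine. The real divergence is in how the remainder is handled. You invoke the internal structure of the profile extraction, namely that $\phi^j$ is \emph{by construction} the weak $\dot H^{\al/2}$-limit of $(G_n^j)^{-1}\omega_n^{j-1}$, which makes $(G_n^j)^{-1}\omega_n^J\rightharpoonup 0$ essentially tautological; you correctly flag that this input is not part of the statement of Lemma \ref{main}, only of its proof in \cite{chkl2}. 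The paper instead derives everything from the \emph{stated} properties of Lemma \ref{main}: it applies $(G_n^j)^{-1}U(t)|\nabla|^{\al/2}$ to \eqref{profileid}, observes that the weak limit $\omega^J$ of the transformed remainder must be independent of $J$ (because the left-hand side is), bounds it by $\limsup_n$ of the Strichartz-type norm in \eqref{profilerem} so that $\omega^J=0$, and then transfers the weak convergence back to $\dot H^{\al/2}$ via the Merle--Vega equivalence lemma. Your argument is shorter and stays entirely in the energy space, avoiding the auxiliary space-time norm and the equivalence lemma; the paper's is a genuine corollary of Lemma \ref{main} as stated, at the cost of that extra machinery. Note also that, given the orthogonality step, your two assertions are equivalent to each other (each follows from the other by applying $(G_n^j)^{-1}$ to \eqref{profileid}), so some external input --- either the construction, as you use, or the Strichartz smallness \eqref{profilerem}, as the paper uses --- is unavoidable to break the circle; your proposal does supply one, so there is no gap, only a dependence on the proof of Lemma \ref{main} rather than its statement.
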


\begin{proof}
We first prove
\[(G_n^j)^{-1}U(t)(|\nabla|^\frac \al2u_n) \rightharpoonup U(t)|\nabla|^\frac \al2\phi^j \text{\ weakly\ in\ }L^{\frac {2(2+\al)}{d}}_{t,x}\text{\ as\ }n\ \rightarrow \infty.\]
Applying $(G_n^j)^{-1}U(t)$ to \eqref{profileid}, we obtain
\[(G_n^j)^{-1}(U(t)|\nabla|^\frac \al2u_n) = U(t)|\nabla|^{\frac \al 2}\phi^j + \sum_{j'\neq j}^J (G_n^j)^{-1}(G_n^{j'})(U(t)|\nabla|^\frac \al2\phi^i) + (G_n^j)^{-1}U(t)|\nabla|^\frac \al2\omega^J_n.\]
From the pairwise orthogonality of the family of parameters, we have
\[(G_n^j)^{-1}(G_n^{j'})(U(t)|\nabla|^\frac \al2\phi^j) \rightharpoonup 0 \text{\ weakly\ in\ }L^{\frac {2(2+\al)}{d}}_{t,x}\text{\ as\ }n\ \rightarrow \infty\]
for every $j' \neq j$. Let $\omega^J$ be the weak limit of $\{(G_n^j)^{-1}U(t)|\nabla|^\frac \al2\omega^J_n\}$. Then
\[(G_n^j)^{-1}(U(t)|\nabla|^\frac \al2u_n) \rightharpoonup U(t)|\nabla|^\frac \al2\phi^j + \omega^J.\]
Since the weak limit is unique, $\omega^J$ does not depend on $J$.
And from
\[\|(G_n^j)^{-1}U(t)|\nabla|^\frac \al2\omega^J_n\|_{L^{\frac {2(2+\al)}{d}}_{t,x}} \leq \limsup_{n \rightarrow \infty}
\|U(t)|\nabla|^\frac \al2\omega^J_n\|_{L^{\frac {2(2+\al)}{d}}_{t,x}} \longrightarrow 0 \text{ as }J\rightarrow \infty,\]
we have $\omega^J = 0$ for every $J \geq 1$.
So we have
\[(G_n^j)^{-1}U(t)(|\nabla|^\frac \al2u_n) \rightharpoonup U(t)|\nabla|^\frac \al2\phi_j \text{\ weakly\ in\ }L^{\frac {2(2+\al)}{d}}_{t,x}\text{\ as\ }n\ \rightarrow \infty.\]
Then following lemma gives the conclusion.
\begin{lemma}[Lemma 3.63 in \cite{meve}]
Let $\{v_n\}$ and $v$ be in $L^2$. The following statements are equivalent.
\begin{enumerate}
\item $v_n \rightharpoonup v$ weakly in $L^2$.
\item $U(t)v_n \rightharpoonup U(t)v$ weakly in $L^{\frac {2(2+\al)}{d}}_{t,x}$.
\end{enumerate}
\end{lemma}
\end{proof}

\newcommand{\sar}{S_\al(\mathbb R)}
\newcommand{\xar}{X_\al(\mathbb R)}

\begin{proposition}
Let $\{u_n\}_{n \geq 1}$ be a sequence of complex-valued radial functions
satisfying
\[\|u_n\|_{\dot{H}^\frac \al2} \leq A \text{\quad and \quad} \|U(t)u_n\|_{\sar} \geq \delta.\]
Suppose $\{\phi^j\}_{1\leq j \leq J}\subset \dot{H}^{\frac \al 2}$ be linear profiles obtained in Lemma \ref{main}.
Then there exist at least one linear profile $\phi^{j_0}$ such that
\[\|U(t)\phi^{j_0}\|_{\sar} \geq C(A,\delta).\]
\end{proposition}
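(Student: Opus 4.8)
Below is how I would approach the proof; I will assume Lemmas~\ref{str}--\ref{main} and Corollary~\ref{strong dec}.

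\medskip

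The plan is to run the classical pigeonhole argument over the profiles produced by Lemma~\ref{main}. With the group operators $G_n^j$ of Corollary~\ref{strong dec}, identity \eqref{profileid} gives
\[
U(t)u_n=\sum_{j=1}^J U(t)G_n^j\phi^j+U(t)\omega_n^J .
\]
Two preliminary observations will be used repeatedly. First, $\|\cdot\|_{\sar}$ is invariant under time translations and under the scaling $v(t,x)\mapsto\lambda^{(d-\al)/2}v(\lambda^\al t,\lambda x)$ of \eqref{main eqn} (a one-line check of exponents, using $\sar=L^{2(d+\al)/(d-\al)}_{t,x}$ in the power case and $\sar=L^6_tL^{2d/(d-4\al/3)}_x$ in the Hartree case); since $G_n^j$ is built precisely from these symmetries, this yields $\|U(t)G_n^j\phi^j\|_{\sar}=\|U(t)\phi^j\|_{\sar}$ for every $n$. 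Second, taking the $\al$-admissible pair $(q,r)=(\tfrac{2(d+\al)}{d-\al},\tfrac{2d(d+\al)}{d^2+\al^2})$ for power type and $(6,\tfrac{6d}{3d-\al})$ for Hartree type (both exponents lying strictly between $2$ and $\infty$), the Sobolev embedding $\dot H^{\al/2}_r\hookrightarrow L^{r^\ast}_x$, with $r^\ast$ the spatial exponent of $\sar$, combined with the radial Strichartz estimate of Lemma~\ref{str}, gives $\|U(t)g\|_{\sar}\lesssim\||\nabla|^{\al/2}g\|_{L^2}$ for radial $g$. Hence $\|U(t)\phi^j\|_{\sar}\lesssim\|\phi^j\|_{\dot H^{\al/2}}$, so \eqref{profilenormortho} yields both $\sum_{j=1}^J\|U(t)\phi^j\|_{\sar}^2\lesssim A^2$ for all $J$ and $\|U(t)\phi^j\|_{\sar}\to0$ as $j\to\infty$; moreover, applying the same embedding to \eqref{profilerem} for the pair $(q,r)$ above,
\[
\limsup_{n\to\infty}\|U(t)\omega_n^J\|_{\sar}\ \lesssim\ \limsup_{n\to\infty}\big\||\nabla|^{\al/2}U(\cdot)\omega_n^J\big\|_{L^q_tL^r_x}\ \xrightarrow[\ J\to\infty\ ]{}\ 0 .
\]

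The crux is the asymptotic decoupling of the $\sar$-norm along the profiles: for a suitable exponent $\rho\ge2$ ($\rho=\tfrac{2(d+\al)}{d-\al}$ for power type, $\rho=6$ for Hartree type),
\[
\limsup_{n\to\infty}\Big\|\sum_{j=1}^J U(t)G_n^j\phi^j\Big\|_{\sar}^{\rho}\ \le\ \sum_{j=1}^J\big\|U(t)\phi^j\big\|_{\sar}^{\rho}.
\]
I would prove this in the standard way: first replace each $\phi^j$ by a $C_0^\infty$ approximant, the error being absorbed by the bound $\|U(t)g\|_{\sar}\lesssim\||\nabla|^{\al/2}g\|_{L^2}$; then expand the $\rho$-th power and show that, for $j\neq k$, every mixed cross term $\limsup_n\big\|\,|U(t)G_n^j\phi^j|\,|U(t)G_n^k\phi^k|\,\big\|$ (in the norm associated with $\sar$) vanishes. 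In the scale-separated alternative of \eqref{profileparaortho} this follows from H\"older's inequality together with the gain of a positive power of the scale ratio $h_n^j/h_n^k$; in the equal-scale, time-separated alternative it follows from the dispersive decay of Lemma~\ref{dispest}. This decoupling is the main obstacle, and in the Hartree case it must be carried out in the mixed norm $L^6_tL^{2d/(d-4\al/3)}_x$, where one has to organize the H\"older pairing against a fixed bump localized at one scale so that it genuinely produces a power of the scale ratio — essentially the same computation as the Hartree energy decoupling discussed just before Lemma~\ref{dispest}.

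Finally, I would combine the pieces. Given $\ep>0$, choose $J=J(\ep)$ with $\limsup_n\|U(t)\omega_n^J\|_{\sar}\le\ep$. Then, using $\|U(t)u_n\|_{\sar}\ge\delta$, the triangle inequality, the decoupling bound, and $\rho\ge2$,
\[
\delta\ \le\ \Big(\sum_{j=1}^J\|U(t)\phi^j\|_{\sar}^{\rho}\Big)^{1/\rho}+\ep\ \le\ \big(\sup_{1\le j\le J}\|U(t)\phi^j\|_{\sar}\big)^{1-2/\rho}\big(CA^2\big)^{1/\rho}+\ep .
\]
Letting $\ep\downarrow0$ forces $\big(\sup_{j\ge1}\|U(t)\phi^j\|_{\sar}\big)^{1-2/\rho}\ge\delta(CA^2)^{-1/\rho}$, and since $\|U(t)\phi^j\|_{\sar}\to0$ as $j\to\infty$ the supremum is attained at some $j_0$, whence
\[
\|U(t)\phi^{j_0}\|_{\sar}\ \ge\ \big(\delta(CA^2)^{-1/\rho}\big)^{\rho/(\rho-2)}\ =:\ C(A,\delta)>0,
\]
which is the assertion.
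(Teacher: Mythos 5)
Your argument is correct and follows essentially the same route as the paper: decompose $U(t)u_n$ via Lemma \ref{main}, remove the remainder using \eqref{profilerem}, decouple the $S_\al(\mathbb{R})$-norm across the profiles using the orthogonality \eqref{profileparaortho}, bound the resulting sum of high powers by $\sup_j\|U(t)\phi^j\|_{S_\al(\mathbb{R})}$ raised to a positive power times $\sum_j\|\phi^j\|_{\dot H^{\al/2}}^2\le A^2$ (via Strichartz), and pigeonhole. The only cosmetic difference is that the paper imports the decoupling as a fourth-power equality from Lemma 3.3 of \cite{chkl2}, whereas you sketch a self-contained proof of the $\rho$-power inequality with the natural Lebesgue exponent of the space; both yield the same conclusion with an explicit $C(A,\delta)$.
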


\begin{proof}
By Lemma \ref{main}, we have
\[U(t)(u_n)(x) = \sum_{1 \leq j \leq J} U(t - t^j_n)[(h^j_n)^{-d/2+\al/2}\phi^j(\cdot/{h^j_n})](x) + U(t)\omega^J_n(x)\]
with
\[\lim_{J \rightarrow \infty} \limsup_{n \rightarrow \infty}
\||\nabla|^\frac \al2U(t)
\omega^l_n\|_{\xar} = 0,\]
and for each $J$
\[\lim_{n \rightarrow \infty} \Big[\|u_n\|_{\dot{H}^\frac \al2}^2 - (\sum_{1 \leq j \leq J}\|\phi^j\|_{\dot{H}^\frac \al2}^2
+ \|\omega^J_n\|_{\dot{H}^\frac \al2}^2) \Big] = 0.\]
From the orthogonality(see Lemma 3.3 in \cite{chkl2}), we get
\[\lim_{n \rightarrow \infty} \|\sum_{j=1}^J U(t - t^j_n)[(h^j_n)^{-d/2+\al/2}\phi^j(\cdot/{h^j_n})](x)\|_{\sar}^4 = \sum_{j=1}^J \|U(t)(\phi^j)(x)\|_{\sar}^4\]
for every $J \geq 1$.
However,
\begin{align*}
&\limsup_{n \rightarrow \infty} \|U(t)(u_n)(x) - \sum^J_{j=1}U(t - t^j_n)[(h^j_n)^{-d/2+\al/2}\phi^j(\cdot/{h^j_n})](x)\|_{\sar} \\
&\qquad \leq \limsup_{n \rightarrow \infty}\|U(t)
\omega^J_n\|_{\sar} \leq \limsup_{n \rightarrow \infty}\||\nabla|^\frac \al2U(t)
\omega^J_n\|_{\xar} \to 0\;\;\mbox{as}\;\;J \to \infty.
\end{align*}
So we obtain
\[\limsup_{n \rightarrow \infty} \|U(t)(u_n)(x)\|_{\sar}^4 = \sum_{j=1}^J \|U(t)(\phi^j)(x)\|_{\sar}^4.\]
And Strichartz estimate gives
\[\sum_{j=1}^J \|U(t)(\phi^j)(x)\|_{\sar}^4 \le C \big(\sup_{j \geq J}\|U(t)(\phi^j)(x)\|_{\sar}^2\big)\sum_{j \geq 1}\|\phi^j\|_{\dot{H}^\frac \al2}^2.\]
Since $\sum_{j \geq 1}\|\phi^j\|_{\dot{H}^\frac \al2}^2 \leq \limsup_{n \rightarrow \infty} \|u_n\|_{\dot{H}^\frac \al2}^2 \leq A^2$, we have
\[\sup_{j \geq 1}\|U(t)(\phi^j)(x)\|_{\sar}^2 \geq \frac {\delta^4}{A^2}.\]
In particular, we can find $j_0$ such that
\[\|U(t)(\phi^{j_0})(x)\|_{\sar}^2 \geq \frac {\delta^4}{A^2}.\]
\end{proof}

\begin{proposition}
Let $\{u_n\}_{n \geq 1}$ be a sequence of complex-valued radial functions bounded in $\dot H^\frac\al2$. Suppose $\{\phi^j\}_{1\leq j \leq J}\subset \dot{H}^\frac\al2$ be linear profiles obtained in Lemma \ref{main}. Then for each $J$,
\[\lim_{n \rightarrow \infty} \left(E(u_n) - \sum_{1 \leq j \leq J}E(U(t^j_n)[(h^j_n)^{-d/2+\al/2}\phi^j(\cdot/{h^j_n})](x)) - E(\omega^J_n)\right) = 0.\]
\end{proposition}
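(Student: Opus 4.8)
The plan is to split $E=\mathcal K+\mathcal V$ and treat the two pieces separately. Write $\phi_n^j:=G_n^j\phi^j=U(t_n^j)\big[(h_n^j)^{-d/2+\al/2}\phi^j(\cdot/h_n^j)\big]$, so that $u_n=\sum_{j=1}^J\phi_n^j+\omega_n^J$. Since $U(t)$ is unitary on $\dot H^\frac\al2$ and the dilation entering $G_n^j$ preserves the $\dot H^\frac\al2$-norm, $\mathcal K(\phi_n^j)=\mathcal K(\phi^j)$; hence the kinetic part of the claim is precisely the norm decoupling \eqref{profilenormortho} of Lemma \ref{main}. So everything reduces to the potential term: with $H(v):=\int(|x|^{-2\al}*|v|^2)|v|^2\,dx$ (the Hartree case; the power case is as in \cite{kv}) one must show
$$H(u_n)=\sum_{1\le j\le J}H(\phi_n^j)+H(\omega_n^J)+o(1)\qquad(n\to\infty,\ J\ \text{fixed}).$$

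Expanding $H$ by multilinearity in the $J+1$ pieces $v_n^1=\phi_n^1,\dots,v_n^J=\phi_n^J,v_n^{J+1}=\omega_n^J$, the diagonal part is exactly $\sum_a H(v_n^a)$, so it remains to show that each of the finitely many off-diagonal terms
$$\int\!\!\int|x-y|^{-2\al}\,v_n^a(x)\overline{v_n^b(x)}\,v_n^c(y)\overline{v_n^d(y)}\,dx\,dy,\qquad (a,b,c,d)\ \text{not constant},$$
tends to $0$. Throughout, with $\rho:=\frac{2d}{d-\al}$, Hardy--Littlewood--Sobolev and Hölder bound such a term by $\|v_n^av_n^b\|_{L^{\rho/2}}\|v_n^cv_n^d\|_{L^{\rho/2}}$, hence by the product of the four $L^\rho$-norms; also $H(v)\lesssim\|v\|_{L^\rho}^4$, and each factor is $O(1)$ in $L^\rho$ since $\dot H^\frac\al2\hookrightarrow L^\rho$. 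Passing to a subsequence, the rescaled time $s_n^j:=t_n^j/(h_n^j)^\al$ either stays bounded --- then $U(s_n^j)\phi^j$ converges strongly in $\dot H^\frac\al2$ and, after a $C_0^\infty$-approximation whose error is absorbed by the uniform bounds, $\phi_n^j$ may be taken to be the pure rescaling $(h_n^j)^{-d/2+\al/2}\widetilde\phi^j(\cdot/h_n^j)$ of a fixed $\widetilde\phi^j\in C_0^\infty$ --- or $|s_n^j|\to\infty$, in which case Lemma \ref{dispest} and density give $\|\phi_n^j\|_{L^\rho}=\|U(s_n^j)\phi^j\|_{L^\rho}\to0$, so every term containing that $\phi_n^j$ is $o(1)$ and may be dropped.

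It then remains to kill the off-diagonal terms built only from fixed-scale profiles and from $\omega_n^J$. If the two components sitting inside one of the densities $|u_n(x)|^2,|u_n(y)|^2$ differ --- say $v_n^a\ne v_n^b$ --- then $\|v_n^av_n^b\|_{L^{\rho/2}}\to0$: two distinct fixed-scale profiles necessarily have separated scales (profiles with a common scale and bounded $s_n$ cannot satisfy \eqref{profileparaortho}), and a change of variables to the coarser scale produces a prefactor tending to $0$ times an $O(1)$ factor; if instead one of $v_n^a,v_n^b$ is $\omega_n^J$, rescaling to the profile's scale turns $\omega_n^J$ into a sequence converging weakly to $0$ in $\dot H^\frac\al2$ (Corollary \ref{strong dec}, using $s_n^j\to s^j$), hence to $0$ in $L^{\rho/2}_{\mathrm{loc}}$ by the compact embedding $\dot H^\frac\al2\hookrightarrow\hookrightarrow L^q_{\mathrm{loc}}$ ($q<\rho$), which kills the product against the fixed compactly supported $\widetilde\phi^j$. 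In the complementary case $a=b$, $c=d$, $a\ne c$, the term equals $\int(|x|^{-2\al}*|v_n^a|^2)|v_n^c|^2$; rescaling to the appropriate scale makes one squared factor $|\widetilde\phi^k|^2$ (fixed, compactly supported) and the other a sequence $g_n\rightharpoonup0$ in $\dot H^\frac\al2$ (a concentrating or spreading bubble, or the rescaled remainder), and using the symmetry $\int(|x|^{-2\al}*f)g=\int f(|x|^{-2\al}*g)$ the term becomes $\int W^k|g_n|^2$ with $W^k:=|x|^{-2\al}*|\widetilde\phi^k|^2$; splitting at $|x|\le R$ (where $W^k$ is bounded and $g_n\to0$ in $L^2_{\mathrm{loc}}$ by Rellich) and at $|x|>R$ (where Hölder gives $\le\|W^k\|_{L^{d/\al}(|x|>R)}\|g_n\|_{L^\rho}^2$, and $W^k\in L^{d/\al}$ so the tail $\to0$) yields $o(1)$.

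The main obstacle is exactly this last mechanism: because the Hartree potential is nonlocal, separation of scales --- or separation of a profile from the remainder --- does not make an interaction term vanish ``by disjoint supports'' as it would for a power nonlinearity. The device replacing the support argument is the combination of near-field compactness ($\dot H^\frac\al2\hookrightarrow\hookrightarrow L^q_{\mathrm{loc}}$) with far-field integrability ($|x|^{-2\al}*|\widetilde\phi|^2\in L^{d/\al}$), applied to the sequence that is weakly null after unscaling; the points that take the most care are verifying that $\omega_n^J$ is indeed weakly null in the frame of each fixed-scale profile (this is Corollary \ref{strong dec}) and organizing the scaling bookkeeping so that every relevant quantity is scale-invariant.
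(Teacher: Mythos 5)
Your argument is correct, but it is organized quite differently from the paper's. The paper proceeds by induction on the number of profiles: it extracts one profile at a time, proving the three-term splitting $H(u_n)=H(u_n-G_n^1\phi^1)+H(G_n^1\phi^1)+o(1)$ for the Hartree potential $H$, where the case $|t_n^1/(h_n^1)^\al|\to\infty$ is killed by dispersive decay (Lemma \ref{dispest}) and the case $t_n^1/(h_n^1)^\al\to t_\infty$ is reduced, after rescaling, to a Brezis--Lieb-type statement (Lemma \ref{refatou}) applied to the weakly convergent sequence $S_n^1(u_n)\rightharpoonup U(t_\infty)\phi^1$; that lemma in turn rests on the compactness of the multiplication operator $T_\beta:\dot H^{\al/2}\to L^p$, $p<\tfrac{2d}{d-\al}$, for $\beta\in C_0^\infty$. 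You instead expand the quadrilinear form in all $J+1$ pieces at once and kill each off-diagonal term by pairwise orthogonality, which trades the induction and the bookkeeping of the successively depleted sequence $u_n-\sum G_n^j\phi^j$ for a larger but finite case analysis. The ingredients largely coincide --- dispersive decay for time-divergent profiles, weak nullity of the remainder in each profile's frame (Corollary \ref{strong dec}), local compactness of multiplication by a $C_0^\infty$ approximant --- but your treatment of the genuinely nonlocal interaction $\int(|x|^{-2\al}*|v_n^a|^2)|v_n^c|^2$ between pieces at separated scales, via the near-field Rellich/far-field $L^{d/\al}$-integrability splitting of $|x|^{-2\al}*|\widetilde\phi^k|^2$, is made explicit where the paper absorbs the analogous difficulty into Lemma \ref{refatou} and the phrase ``repeated arguments give the conclusion.'' What your route buys is a self-contained, single-pass proof that isolates exactly which orthogonality kills which term; what the paper's route buys is brevity, since the Brezis--Lieb lemma packages all cross terms between the extracted profile and the rest into one statement. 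Both are complete modulo the same standard facts, so I would accept your proof as a valid alternative.
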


\begin{proof} For the power type we refer the readers to the proof in \cite{ker}. We only consider the Hartree case. Also see \cite{mgz} for NLS with Hartree nonlinearity.

Thanks to the kinetic energy decoupling \eqref{profilenormortho}, it suffices to show
\begin{align*}
\lim_{n \rightarrow \infty} \Big(\int |u_n|^2(|x|^{-2\al} * &|u_n|^2)dx - \sum_{1 \leq j \leq l}\int |G_n^j(\phi^j)|^2(|x|^{-2\al} * |G_n^j(\phi^j)|^2)dx\\
&- \int |\omega^J_n|^2(|x|^{-2\al} * |\omega^J_n|^2)dx\Big) = 0.
\end{align*}
We first prove
\begin{align*}
\lim_{n \rightarrow \infty} \Big(\int |u_n|^2(|x|^{-2\al} * |u_n|^2)dx &- \int |u_n - G_n^1(\phi^1)|^2(|x|^{-2\al} * |u_n - G_n^1(\phi^1)|^2)dx\\
-&\int |G_n^1(\phi^1)|^2(|x|^{-2\al} * |G_n^1(\phi^1)|^2)dx \Big)= 0.
\end{align*}
Then repeated arguments give the conclusion.

When $\lim_{n \rightarrow \infty} \big|\frac {t^1_n}{(h^1_n)^\al}\big| = \infty$, we obtain
\begin{align*}
&\quad \lim_{n \rightarrow \infty} \int |G_n^1(\phi^1)|^2(|x|^{-2\al} * |G_n^1(\phi^1)|^2)dx = \lim_{n \rightarrow \infty} \|G_n^1(\phi^1)\|^4_{L^{\frac {2d}{d-\al}}} = \lim_{n \rightarrow \infty} \Big\|U\Big(\frac {t^1_n}{(h^1_n)^\al}\Big)(\phi^1)\Big\|^4_{L^{\frac {2d}{d-\al}}} = 0
\end{align*}
by using H\"older inequality, fractional integration, scaling and Lemma \ref{dispest}. Similarly, one can prove \[\lim_{n\to \infty}\left(\int |u_n|^2(|x|^{-2\al} * |u_n|^2)dx - \int |u_n - G_n^1(\phi^1)|^2(|x|^{-2\al} * |u_n - G_n^1(\phi^1)|^2)dx\right) = 0.\]

Now we handle the case $\lim_{n \rightarrow \infty} \big|\frac {t^1_n}{(h^1_n)^\al}\big| < \infty$. By taking subsequence we may assume that $\lim_{n \rightarrow \infty} \frac {t^1_n}{(h^1_n)^\al} = t_\infty.$ Let $S^1_n(u_n) := (h^1_n)^{\frac d2 - \frac \al 2}u_n(h^1_n \cdot)$. Then we have
\begin{align*}
S^1_n(u_n) \rightharpoonup U(t_\infty)\phi^1 &\text{\ weakly\ in\ }\dot{H}^{\frac \al2} \text{\ as\ }n \rightarrow \infty\\
\text{\ and\ }G_n^1(S^1_n(\phi_1)) \rightarrow U(t_\infty)\phi^1 &\text{\ strongly\ in\ }L^{\frac {2d}{d-\al}} \text{ as }n \rightarrow \infty.
\end{align*}
The scaling symmetry yields
\begin{align*}
&\int |u_n|^2(|x|^{-2\al} * |u_n|^2)dx - \int |u_n - G_n^1(\phi^1)|^2(|x|^{-2\al} * |u_n - G_n^1(\phi^1)|^2)dx\\
&\qquad -\int |G_n^1(\phi^1)|^2(|x|^{-2\al} * |G_n^1(\phi^1)|^2)dx\\
&=\int |S^1_n(u_n)|^2(|x|^{-2\al} * |S^1_n(u_n)|^2)dx\\
&\qquad-\int |S^1_n(u_n) - G_n^1(S^1_n(\phi^1))|^2(|x|^{-2\al} * |S^1_n(u_n) - G_n^1(S^1_n(\phi^1))|^2)dx\\
&\qquad -\int |G_n^1(S^1_n(\phi^1))|^2(|x|^{-2\al} * |G_n^1(S^1_n(\phi^1))|^2)dx\\
&=: I_n + II_n + III_n,
\end{align*}
where
\begin{align*}
I_n &= \int |S^1_n(u_n)|^2(|x|^{-2\al} * |S^1_n(u_n)|^2)dx\\
&\quad- \int |S^1_n(u_n) - U(t_\infty)\phi^1|^2(|x|^{-2\al} * |S^1_n(u_n) - U(t_\infty)\phi^1|^2)dx\\
&\quad- \int |U(t_\infty)\phi^1|^2(|x|^{-2\al} * |U(t_\infty)\phi^1|^2)dx,\\
II_n &= \int |S^1_n(u_n) - G_n^1(S^1_n(\phi_1))|^2(|x|^{-2\al} * |S^1_n(u_n) - G_n^1(S^1_n(\phi_1))|^2)dx\\
&\quad - \int |S^1_n(u_n) - U(t_\infty)\phi^1|^2(|x|^{-2\al} * |S^1_n(u_n) - U(t_\infty)\phi^1|^2)dx,\\
III_n &= \int |G_n^1(S^1_n(\phi_1))|^2(|x|^{-2\al} * |G_n^1(S^1_n(\phi_1))|^2)dx\\
 &\quad- \int |U(t_\infty)\phi^1|^2(|x|^{-2\al} * |U(t_\infty)\phi^1|^2)dx.
\end{align*}
$I_n$ goes to $0$ by Lemma \ref{refatou} below. And by using H\"older inequality, fractional integration and Lemma \ref{dispest} again, we also obtain
\begin{align*}
\lim_{n\to \infty}(II_n + III_n) = 0.
\end{align*}
\end{proof}

\begin{lemma}\label{refatou}
Let $\{f_n\}$ be bounded sequence in $\dot{H}^{\frac \al2}$. If $f_n$ weakly converges to $f$, then for some subsequence $\{f_n\}$,
\begin{align*}
\int |f_n|^2(|x|^{-2\al} * |f_n|^2) - &|f_n - f|^2(|x|^{-2\al} * |f_n - f|^2) - |f|^2(|x|^{-2\al} * |f|^2)dx\\
& \longrightarrow 0 \text{ as } n \rightarrow \infty.
\end{align*}
\end{lemma}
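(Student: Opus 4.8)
The plan is to prove this as a Brezis--Lieb type decoupling for the Hartree functional, driven by almost-everywhere convergence. Set $g_n = f_n - f$, so that $g_n \rightharpoonup 0$ weakly in $\dot H^\frac\al2$ and, by the Sobolev embedding $\dot H^\frac\al2 \hookrightarrow L^\frac{2d}{d-\al}$, the sequence $\{g_n\}$ is bounded in $L^\frac{2d}{d-\al}$. On every ball $B_R := \{|x|\le R\}$ the sequence is then bounded in the fractional Sobolev space $H^\frac\al2(B_R)$ (the Gagliardo seminorm over $B_R$ is dominated by $\|g_n\|_{\dot H^\frac\al2}$, and $\|g_n\|_{L^2(B_R)}$ by $\|g_n\|_{L^\frac{2d}{d-\al}}$), so by Rellich compactness and a diagonal extraction I would pass to a subsequence along which $g_n \to 0$ in $L^2_{loc}$ and a.e. It is convenient to introduce the symmetric bilinear form
\[
\mathcal B(F,G) = \iint F(x)\,|x-y|^{-2\al}\,G(y)\,dx\,dy = \int F\,(|x|^{-2\al}*G)\,dx,
\]
which by the Hardy--Littlewood--Sobolev inequality obeys $|\mathcal B(F,G)| \le C\|F\|_{L^\frac{d}{d-\al}}\|G\|_{L^\frac{d}{d-\al}}$ (here $d > 2\al$ guarantees $1 < \frac{d}{d-\al} < \frac{d}{\al} < \infty$); in particular $|x|^{-2\al}*|f|^2 \in L^\frac{d}{\al} = (L^\frac{d}{d-\al})^*$ is a fixed function.

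Next I would expand. Writing $|f_n|^2 = |f|^2 + |g_n|^2 + r_n$ with $r_n := 2\,\mathrm{Re}(f\,\overline{g_n})$ and using bilinearity and symmetry of $\mathcal B$, the quantity in the lemma equals
\begin{align*}
&\mathcal B(|f_n|^2,|f_n|^2) - \mathcal B(|g_n|^2,|g_n|^2) - \mathcal B(|f|^2,|f|^2) \\
&\qquad = 2\,\mathcal B(|f|^2,|g_n|^2) + \mathcal B\big(r_n,\ r_n + 2|f|^2 + 2|g_n|^2\big).
\end{align*}
For the first term, the a.e.\ convergence $g_n \to 0$ together with the uniform bound $\||g_n|^2\|_{L^\frac{d}{d-\al}} = \|g_n\|_{L^\frac{2d}{d-\al}}^2 \le C$ yields $|g_n|^2 \rightharpoonup 0$ weakly in $L^\frac{d}{d-\al}$, so pairing against $|x|^{-2\al}*|f|^2 \in L^\frac{d}{\al}$ gives $\mathcal B(|f|^2,|g_n|^2) \to 0$. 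By the Hardy--Littlewood--Sobolev bound above, the second term is at most $C\|r_n\|_{L^\frac{d}{d-\al}}$ times a quantity bounded uniformly in $n$, so everything reduces to the claim $\|r_n\|_{L^\frac{d}{d-\al}} \to 0$.

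To prove that claim I would truncate the fixed profile $f$: given $M$, write $f = f_M + f^M$ with $f_M = f\,\mathbf 1_{\{|f|\le M\}\cap B_M}$, so that $\ep_M := \|f^M\|_{L^\frac{2d}{d-\al}} \to 0$ as $M \to \infty$. By H\"older, $\|f^M\,\overline{g_n}\|_{L^\frac{d}{d-\al}} \le \ep_M\sup_n\|g_n\|_{L^\frac{2d}{d-\al}}$; for the bounded, compactly supported piece the inequality $\frac{d}{d-\al} < 2$ (which is exactly $d>2\al$) and H\"older on $B_M$ give $\|f_M\,\overline{g_n}\|_{L^\frac{d}{d-\al}} \le C_M\,\|g_n\|_{L^2(B_M)} \to 0$ as $n\to\infty$ for each fixed $M$, by the $L^2_{loc}$ convergence. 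Hence $\limsup_n\|r_n\|_{L^\frac{d}{d-\al}} \le C\ep_M$ for every $M$, and letting $M\to\infty$ gives $\|r_n\|_{L^\frac{d}{d-\al}} \to 0$, which would complete the argument.

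The main obstacle is precisely this last step, the cross term $\mathcal B(r_n,\cdot)$: $r_n$ converges only weakly a priori, and since $\dot H^\frac\al2$ is the critical Sobolev space for $L^\frac{2d}{d-\al}$ one cannot upgrade $g_n \rightharpoonup 0$ to strong convergence in $L^\frac{2d}{d-\al}$ (concentration is allowed). What rescues the argument is that $r_n$ is \emph{linear} in $g_n$ and carries the fixed factor $f$, so truncating $f$ turns the Rellich compactness of $g_n$ in $L^2_{loc}$ --- available precisely because $d>2\al$ forces $L^2_{loc}\hookrightarrow L^\frac{d}{d-\al}_{loc}$ --- into genuine $L^\frac{d}{d-\al}$ smallness of $r_n$. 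Everything else is routine bookkeeping with Hardy--Littlewood--Sobolev and weak convergence.
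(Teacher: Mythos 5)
Your argument is correct, and at heart it is the same Brezis--Lieb-type decoupling the paper uses: expand the difference into cross terms and kill each one by combining the Hardy--Littlewood--Sobolev bound with a local compactness statement for the weakly vanishing part $g_n=f_n-f$. The packaging of that compactness differs: you extract, via Rellich on balls and a diagonal argument, a subsequence with $g_n\to 0$ in $L^2_{loc}$ and a.e., and then truncate the fixed profile $f$ by level sets and balls; the paper instead approximates $f$ in $\dot H^{\al/2}$ by $\beta\in C^\infty_0$ and invokes compactness of the multiplication operator $T_\beta:\dot H^{\al/2}\to L^p$, $p<\frac{2d}{d-\al}$ --- the same Rellich phenomenon in operator form. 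Your route buys one genuinely extra thing: the a.e.\ convergence gives $|g_n|^2\rightharpoonup 0$ weakly in $L^{\frac{d}{d-\al}}$, which is what handles the term $\mathcal B(|f|^2,|g_n|^2)=\int|f_n-f|^2\,(|x|^{-2\al}*|f|^2)\,dx$. This matters because your algebraic expansion is the complete one, whereas the identity displayed in the paper's proof drops exactly the terms $2\int|f_n-f|^2\big(|x|^{-2\al}*(|f|^2+2\mathrm{Re}(f\overline{(f_n-f)}))\big)\,dx$ (test with real scalars $f_n=a$, $f=c$: the left side is $a^4-(a-c)^4-c^4$ while the paper's right side evaluates to $4ac^2(a-c)$, e.g.\ $14$ versus $8$ at $a=2$, $c=1$). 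Of the omitted terms, the piece $2\int|f_n-f|^2(|x|^{-2\al}*|f|^2)\,dx$ is not of the form ``fixed profile times $g_n$'' and cannot be absorbed by the paper's ``remaining five terms are treated the same way'' remark; it needs precisely your a.e.-convergence/weak-$L^{\frac{d}{d-\al}}$ argument. So your version is not only a valid alternative but the more careful one.
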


\begin{proof}
Assume that $\||\nabla|^\frac\al2 f_n\|_{L^2} \le M$ for all $n \ge 1$. Since $C^\infty_{0}$ is dense in $\dot{H}^{\frac \al2}$, one can find $\beta \in C^\infty_{0}$ such that $\|\beta - f\|_{\dot{H}^{\frac \al2}} < \frac{\ep}{12C(1+M)^2}$ for some constant $C$. And since the multiplication operator $T_{\beta} : \dot{H}^{\frac \al2} \rightarrow L^p_x, T_{\beta}(f) = \beta f$ is compact when $\beta \in C^\infty_0, 1 \leq p < \frac {2d}{d-\al}$, there exists subsequence of $f_n$ such that $\|\beta(f_n-f)\|_{L^{\frac {d}{d-\al}}} < \frac{\ep}{12C(1+M)^2}$ if $n \ge N$.

On the other hand, one can easily check that
\begin{align*}
\qquad&\int |f_n|^2(|x|^{-2\al} * |f_n|^2) - |f_n - f|^2(|x|^{-2\al} * |f_n - f|^2) - |f|^2(|x|^{-2\al} * |f|^2)dx\\
=&\int (f\overline{f_n} + \overline{f}f_n - 2|f|^2)(|x|^{-2\al} * |f_n|^2) - |f_n - f|^2(|x|^{-2\al} *(f\overline{f_n} + \overline{f}f_n - 2|f|^2))\\
& +(f\overline{f_n} + \overline{f}f_n - 2|f|^2)(|x|^{-2\al} *|f|^2 )-|f_n|^2(|x|^{-2\al} * |f|^2)+ |f|^2(|x|^{-2\al} * |f_n|^2)dx.
\end{align*}
Let us observe that
\[\int -|f_n|^2(|x|^{-2\al} * |f|^2)+ |f|^2(|x|^{-2\al} * |f_n|^2)\,dx = 0.\]
Then by using H\"older's inequality and Sobolev embedding, we have
\begin{align*}
&\qquad \int(f\overline{(f_n-f)})(|x|^{-2\al} * |f_n|^2)dx\\
& = \int((f - \beta)\overline{(f_n-f)})(|x|^{-2\al} * |f_n|^2) + (\beta\overline{(f_n-f)})(|x|^{-2\al} * |f_n|^2)dx\\
&\le C \|f - \beta\|_{\dot H^\frac\al2}\|f_n-f\|_{L^{\frac {2d}{d-\al}}}\|f_n\|_{L^{\frac {2d}{d-\al}}} + C \|\beta(f_n-f)\|_{L^{\frac {d}{d-\al}}}\|f_n\|_{L^{\frac {2d}{d-\al}}}\\
&\le CM^2\|f - \beta\|_{\dot H^\frac\al2} + CM\|\beta(f_n-f)\|_{L^{\frac {d}{d-\al}}} < \frac\ep6\;\;\mbox{if}\;\;n \ge N.
\end{align*}
We need to treat remaining 5 terms. But they can be done by exactly the same way as above.
\end{proof}

Using the local well-posedness theorem with initial data at $t=0$ or $t=\pm \infty $, we define the nonlinear profile by the maximal nonlinear solution for each linear profile.

\begin{defn}
Let $\{(h_n,t_n)\} $ be a family of parameters and $\{t_n\}$ have a limit in $[-\infty, \infty]$. Given a linear profile $\phi \in \dot{H}^\frac \al2$ with $\{(h_n,t_n)\}$, we define the nonlinear profile associated with it to be the maximal solution $v$ to \eqref{main eqn} which is in $C((-T_{\min},T_{\max}); \dot{H}^\frac \al2)$ satisfying an asymptotic condition: For the sequence $\{t_n\}$,
\[\lim_{n \to \infty}\|U(t_n)\phi - v(t_n)\|_{\dot{H}^\frac \al2} = 0.\]
\end{defn}
\begin{remark}\label{nonlinear prof}
Let $\{u_n\}_{n \geq 1}$ be a sequence of complex-valued radial functions bounded in $\dot H^\frac\al2$ and $\{\phi^j\}_{1\leq j \leq J}\subset \dot{H}^\frac\al2$ be the corresponding linear profiles obtained in Lemma \ref{main}. Then by refining subsequence and using diagonal argument we may assume that for each $j$ the sequence $\{t_n^j\}$ converges to $t^j \in [-\infty, +\infty]$. By using the standard time-translation and absorbing error we may assume that $t^j := 0$ and either $t_n^j := 0 $ or $t_n^j \to \pm \infty$.

As stated in \cite{keme} the nonlinear profiles $v^j : I^j \times \mathbb R^d \to \mathbb C$ associated with $\phi^j$ and $t_n^j$ always exist and they can be summarized as follows:
If $t_n^j = 0$, then $v^j$ is the maximal solution to \eqref{main eqn}  with initial data $v^j(0) = \phi^j$. If $t_n^j \to \pm\infty$, then $v^j$ is the maximal solution to \eqref{main eqn} that scatters forward/backward in time to $U(t)\phi^j$.

\end{remark}

\section{Energy concentration}
In this section we show Theorems \ref{conc-infty} and \ref{energy-conc} by following the arguments as in \cite{mets} and \cite{kv}, respectively.

\subsection{Unconfined kinetic energy: Proof of Theorem \ref{conc-infty}}
Let $\beta$ be a $C_0^\infty$-bump function which is $1$ for $|x| \le 1$ and $0$ for $|x| > 1$. Then we have from Lemma \ref{com-lem} and mass conservation that
\begin{align*}
\|\beta(\cdot/R)u\|_{L^\frac{2d}{d-\al}}^\frac{2d}{d-\al} &\le C\||\nabla|^\frac\al2(\beta(\cdot/R)u)\|_{L^2}^\frac{2d}{d-\al}\\
 &\le C\|\big[ |\nabla|^\frac\al2, \beta(\cdot/R)\big] u\|_{L^2}^\frac{2d}{d-\al} + C\||\nabla|^\frac\al2 u\|_{L^2(|x| \le 2R)}^\frac{2d}{d-\al}\\
 & \le CR^{-\frac\al2}\|u\|_{L^2}^\frac{2d}{d-\al} + C\||\nabla|^\frac\al2 u\|_{L^2(|x| \le 2R)}^\frac{2d}{d-\al}\\
 &  \le C(R^{-\frac\al2}\|\varphi\|_{L^2}^\frac{2d}{d-\al}) + C\||\nabla|^\frac\al2 u\|_{L^2(|x| \le 2R)}^\frac{2d}{d-\al}\\
 & \le A + \||\nabla|^\frac\al2 u\|_{L^2(|x| \le 2R)}^\frac{2d}{d-\al}
\end{align*}
for some $A = A(R, \|\varphi\|_{L^2})$. Using the endpoint Sobolev inequality (Proposition 2 of \cite{chooz-ccm}) and real interpolation \cite{bl} that
$$
|x|^\frac{d-1}2 |f(x)| \le C \|f\|_{\dot B_{2,1}^\frac12} \le C\|f\|_{L^2}^\frac{\al-1}\al\||\nabla|^\frac\al2 f\|_{L^2}^\frac1\al,
$$
we have
\begin{align*}
\|(1-\beta(\cdot/R))u\|_{L^\frac{2d}{d-\al}}^\frac{2d}{d-\al} &= \int (1-\beta(\cdot/R))|u|^{\frac{2\al}{d-\al}}|u|^2dx &\le CR^{-\frac{\al(d-1)}{d-\al}}\|u\|_{L^2}^\frac{2(d-1)}{d-\al}\||\nabla|^\frac\al2u\|_{L^2}^{\frac{2}{d-\al}} \le A \||\nabla|^\frac\al2u\|_{L^2}^{\frac{2}{d-\al}},
\end{align*}
where $\|f\|_{\dot B_{2,1}^\frac12} := \sum_N N^\frac12 \|f_N\|_{L^2}$ is the homogeneous Besov norm.

On the other hand, for Hartree type we have  that
\begin{align*}
\int V(u)&|\beta(x/R)u|^2\,dx\\
 &\le C\|u\|_{L^\frac{2d}{d-\al}}^2\|\beta(\cdot/R)u\|_{L^\frac{2d}{d-\al}}^2\\
 &\le C\|\beta(\cdot/R)u\|_{L^\frac{2d}{d-\al}}^4 + C\|(1 - \beta(\cdot/R))u\|_{L^\frac{2d}{d-\al}}^2\|\beta(\cdot/R)u\|_{L^\frac{2d}{d-\al}}^2\\
 &\le A + \||\nabla|^\frac\al2 u\|_{L^2(|x| \le 2R)}^4 + A\||\nabla|^\frac\al2 u\|_{L^2}^\frac{2}{d}(A + \||\nabla|^\frac\al2 u\|_{L^2(|x| \le 2R)}^2)
\end{align*}
and
\begin{align*}
\int V(u)&|(1-\beta(x/R))u|^2\,dx\\
 &\le C\|u\|_{L^\frac{2d}{d-\al}}^2\|(1-\beta(\cdot/R))u\|_{L^\frac{2d}{d-\al}}^2\\
 &\le C\|(1-\beta(\cdot/R))u\|_{L^\frac{2d}{d-\al}}^4 + C\|(1 - \beta(\cdot/R))u\|_{L^\frac{2d}{d-\al}}^2\|\beta(\cdot/R)u\|_{L^\frac{2d}{d-\al}}^2\\
 &\le A\||\nabla|^\frac\al2 u\|_{L^2}^\frac{4}{d} + A\||\nabla|^\frac\al2 u\|_{L^2}^\frac{2}{d}(A + \||\nabla|^\frac\al2 u\|_{L^2(|x| \le 2R)}^2).
\end{align*}

From the energy conservation it follows that
\begin{align*}
\||\nabla|^\frac\al2u(t)\|_{L^2}^2 &= \frac{2}{\mu}\left\{\begin{array}{l}\|\beta(\cdot/R)u(t)\|_{L^\frac{2d}{d-\al}}^\frac{2d}{d-\al} + \|(1-\beta(\cdot/R))u(t)\|_{L^\frac{2d}{d-\al}}^\frac{2d}{d-\al}\\
\int V(u)|\beta(\cdot/R)u(t)|^2\,dx + \int V(u)|(1-\beta(\cdot/R))u(t)|^2\,dx\end{array}\right\} + E(\varphi).
\end{align*}
Let $y(t) = \||\nabla|^\frac\al2u(t)\|_{L^2}^2$ and $z(t) = \||\nabla|^\frac\al2 u(t)\|_{L^2(|x|\le 2R)}^2$. Then from the above estimates we have
$$
y(t) \le  C\left\{\begin{array}{l} A + z^\frac{d}{d-\al} + y^\frac{1}{d-\al}\\ A + Ay^\frac1d(A+z)+Ay^\frac2d+z^2\end{array}\right\}  + E(\varphi).
$$
Since $\limsup_{t\to T^*}y(t) = +\infty$ and $d > \al+1$ for power type ($d > 2\al$ for Hartree type), we conclude that
$\limsup_{t\to T^*} z(t) = +\infty$.

If $u(t) \in L^\infty$ for all $t < T^*$, then since $\|\beta(\cdot/R)u(t)\|_{L^\frac{2d}{d-\al}} \le CR^\frac{d-\al}2 \|u\|_{L^\infty(|x| \le 2R)}$, by replacing $\||\nabla|^\frac\al2u(t)\|_{L^2(|x|\le 2R)}$ with $\|u\|_{L^\infty(|x| \le 2R)}$ in the above estimates we get the desired.

\subsection{Confined kinetic energy: Proof of Theorem \ref{energy-conc}}
 Choose a sequence $t_n \to T^*$ and let $u_n$ be the solution on $[0, T^* - t_n)$ to \eqref{main eqn} with initial data $u(t_n)$. Then since $\sup_{0 < t < T^*}\||\nabla|^\frac\al2u(t)\|_{L^2} =: M < +\infty$, by Lemma \ref{main} we can decompose each $u_n(0)$ by
$$
u_n(0) = \sum_{j = 1}^J G_n^j \phi^j + \omega_n^J.
$$
We denote the symmetry operator $g_n^j$ by $g_n^jf(t, x) = (h_n^j)^{-\frac{d-\al}{2}}f(t/(h_n^j)^\al, x/h_n^j)$. Then $G_n^j \phi^j = g_n^j U(t_n^j)\phi^j$. Let $v^j : I^j \times \mathbb R^d \to \mathbb C$ be nonlinear profile associated with $\phi^j$ and $(h_n^j, t_n^j)$ as stated in Remark \ref{nonlinear prof}. For each $j, n \ge 1$, we define $v_n^j : I_n^j \times \mathbb R^d \to \mathbb C$ by
$$
v_n^j(t) := g_n^jv^j(\cdot + t_n^j)(t),
$$
where $I_n^j = \{t \in \mathbb R : (h_n^j)^{-\al}t + t_n^j \in I^j\}$. Then $v_n^j$ is also a solution to \eqref{main eqn} with initial data $v_n^j(0) = g_n^jv^j(t_n^j)$ and maximal time interval $I_n^j = (-T_{n,j}^-, T_{n,j}^+)$ for $0 < T_{n,j}^-, T_{n,j}^+ < +\infty$. By the kinetic energy decoupling \eqref{profilenormortho} there exists $J_0 = J_0(\delta_0) \ge 1$ such that
$$
\||\nabla|^\frac\al2 \phi^j\|_{L^2} \le \delta_0\;\;\mbox{for all}\;\;j \ge J_0.
$$
For sufficiently small $\delta_0$, Lemma \ref{loc} yields that $v_n^j$ are global and satisfy that
\begin{align}\label{energy bound}
\sup_{t\in\mathbb R}\||\nabla|^\frac\al2 v_n^j(t)\|_{L^2} + \|v_n^j\|_{\sar} \le C\||\nabla|^\frac\al2 \phi^j\|_{L^2}.
\end{align}

Now we can find a so-called bad profile $\phi^{j_0}$, $1 \le j_0 < J_0$ such that
\begin{align}\label{bad}
\limsup_{n \to \infty}\|v_n^{j_0}\|_{S_\al([0, T^*-t_n)} = +\infty,
\end{align}
\newcommand{\san}{S_\al([0, T_n^*))}
\newcommand{\xan}{X_\al([0, T_n^*))}
\begin{proof}[Proof of \eqref{bad}]
We will actually show that
\begin{align}
\limsup_{n \to \infty}\|v_n^{j_0}\|_{S_\al([0, T_n^*))} = +\infty,
\end{align}
where $T_n^* = \min_{1 \le j < J_0}(T^*-t_n, T_{n,j}^+)$. Suppose that $\limsup_{n \to \infty}\|v_n^j\|_{S_\al([0, T_n^*))} < +\infty$ for all $1 \le j < J_0$. Then this implies that $T^*-t_n \le T_{n, j}^+$ for all $1 \le j < J_0$ if $n$ is large. If $T_{n, j}^+ \le T^*-t_n$ for some $j$, then since $\limsup_{n \to \infty}\|v_n^j\|_{S_\al([0, T_{n,j}^+))} < +\infty$, the maximality means that $T_{n,j}^+ = +\infty$ for sufficiently large $n$. This contradicts the fact $T^* < +\infty$.
 Then from this together with \eqref{energy bound} and \eqref{profilenormortho} it follows that
\begin{align}\label{s-bound}
\sum_{j \ge 1}^J \|v_n^j\|_{S_\al([0, T_n^*))}^2 \le C (1 + \sum_{j \ge J_0}^J\||\nabla|^\frac\al2 \phi^j\|_{L^2}^2) \le C(1 + M^2)
\end{align}
for any $J$ and for sufficiently large $n$. We now define functions $u_n^J$ on $[0, T_n^{m_0}]$ approximating $u_n$ by
$$
u_n^J = \sum_{j = 1}^Jv_n^j + U(t)\omega_n^J.
$$
Since $v^j$ are nonlinear profile associated with $(\phi^j, t_n^j)$, we have
\begin{align*}
\|u_n^J(0) - u_n(0)\|_{\dot H^\frac\al2} = \|\sum_{j = 1}^J(g_n^j v^j(t_n^j) - g_n^j U(t_n^j)\phi^j)\|_{\dot H^\frac\al2} \le \sum_{j=1}^J\|v^j(t_n^j) - U(t_n^j)\phi^j\|_{\dot H^\frac\al2} \to 0
\end{align*}
as $n \to \infty$. By \eqref{profilerem} and \eqref{s-bound} we also have
\begin{align}\begin{aligned}\label{app-s-bound}
\lim_J\limsup_{n \to \infty} \|u_n^J\|_{S_\al([0, T_n^*))} &\le \lim_J\limsup_{n \to \infty}(\|\sum_j v_n^j\|_{\san} + \|U(t)\omega_n^J\|_{\san})\\
& \le C(1 + M^2).
\end{aligned}\end{align}
By the local well-posedness we deduce that
\begin{align}\label{app-norm}
\lim_J\limsup_{n \to \infty} (\|u_n^J\|_{L^\infty_{[0, T_n^*)} \dot H^{\frac \al2}} + \||\nabla|^\frac\al2 u_n^J\|_{\xan}) \le C(M).
\end{align}

On the other hand, $u_n^J$ satisfy that
$$
i\partial_t u_n^J = |\nabla|^\al u_n^J - V(u_n^J)u_n^J + e,
$$
where $e = e_1 + e_2$, $$e_1 = V(u_n^J) - V(\sum_{j=1}^J v_n^j)(\sum_{j=1}^J v_n^j)$$ and $$e_2 = V(\sum_{j=1}^Jv_n^j)(\sum_{j=1}^Jv_n^j)- \sum_{j = 1}^J V(v_n^j)v_n^j.$$
We first show that $\limsup_{n \to \infty}\||\nabla|^\frac\al2 e_2\|_{Y_\al([0, T_n^*))} = 0$.
In fact, from direct calculation we get that for power type
\begin{align*}
e_2 = \frac{2\al}{d-\al}{\rm Re}\sum_{j\neq j'}v_n^{j'}v_n^j\int_0^1| s \sum_{j'\neq j}v_n^{j'} + v_n^j|^\frac{4\al-2d}{d-\al}(s \sum_{j'\neq j}v_n^{j'} + v_n^j)\,ds
\end{align*}
and for Hartree type
\begin{align*}
e_2 = \sum_{j'\neq j}(|x|^{-2\al} * |v_n^{j'}|^2)v_n^j + \sum_j\sum_{j_1' \neq j_2'} (|x|^{-2\al} * (v_n^{j_1'}\overline{v_n^{j_2'}})) v_n^j.
\end{align*}
Since $\al < d \le 2\al$ for power type, we have
\begin{align*}
\||\nabla|^\frac\al2 e_2\|_{Y_{\al}([0,T^*))} \le C\sum_{j\neq j'}\Big(&\||\nabla|^\frac\al2(v_n^{j'}v_n^j)\|_{L^\frac{d+\al}{d-\al}_{[0,T_n^*)}L^\frac{2d(d+\al)}{2d^2-\al d+\al^2}}(\sum_{j=1}^J\|v_n^j\|_{\san}^\frac{3\al-d}{d-\al})\\
& + (\sum_{j=1}^J\||\nabla|^\frac\al2 v_n^j\|_{\xan})(\sum_{j=1}^J\|v_n^j\|_{\san}^\frac{4\al-2d}{d-\al})\|v_n^{j'}v_n^j\|_{L_{[0,T_n^*),x}^\frac{d+\al}{d-\al}}\Big).
\end{align*}
Thus the orthogonality \eqref{profileparaortho} gives
$$
\limsup_{n\to\infty}\||\nabla|^\frac\al2 e_2\|_{L^2_{[0, T_n^*)} L^\frac{2d}{d+\al}} = 0.
$$
For Hartree type by the orthogonality \eqref{profileparaortho} and the argument used for the proof of Lemma 3.3 in \cite{chkl2} one can easily get
\begin{align*}
\limsup_{n\to \infty}\||\nabla|^\frac\al2 e_2\|_{L^2_{[0,T_n^*)}L^\frac{2d}{d+\al}} = 0.
\end{align*}
Now let us consider $e_1$. Let $V_n^J = \sum_{j= 1}^Jv_n^j$ and let us invoke that $\mu = \frac{2d}{d-\al}$ for power type and $\mu = 4$ for Hartree type. Then we have
\begin{align*}
&\||\nabla|^\frac\al2 e_1\|_{Y_\al([0, T_n^*))}\\
 &\le C\Big(\||\nabla^\frac\al2 u_n^J\|_{\xan} + \||\nabla|^\frac\al2V_n^J\|_{\xan}\Big)\Big(\|u_n^J\|_{\san}^{\mu-3} + \|V_n^J\|_{\san}^{\mu-3}\Big)\|U(t)\omega_n^J\|_{\san}\\
&\quad + C\Big(\|u_n^J\|_{\san}^{\mu-2} + \|V_n^J\|_{\san}^{\mu-2}\Big)\||\nabla|^\frac\al2 U(t)\omega_n^J\|_{\xan}.
\end{align*}
By \eqref{profilerem} we get
$$
\lim_{J\to \infty}\limsup_{n\to \infty}\||\nabla|^\frac\al2 e_1\|_{Y_\al([0, T_n^*))} = 0.
$$

We apply Lemma \ref{stab} with $\widetilde u = u_n^J$ and $u = u_n$ to conclude that
$$
\|u_n\|_{S_\al([0,T^*-t_n))} < +\infty \;\;\mbox{for sufficiently large}\;\;n.
$$
This contradicts that $u$ blows up within finite time $T^*$.
\end{proof}

By reordering we may assume that $\limsup_{n\to\infty}\|v_n^1\|_{S_\al([0,T^*-t_n))} = +\infty$ and that there exists $1 \le J_1 < J_0$ such that
$$
\limsup_{n\to\infty}\|v_n^j\|_{S_\al([0, T^*-t_n))} = \infty\;\;(j \le J_1)\;\;\mbox{and}\;\;\limsup_{n\to\infty}\|v_n^j\|_{S_\al([0, T^*-t_n))} < \infty\;\;(j > J_1).
$$
Then for each $m, n \ge 1$, there exist $1 \le j(m, n) \le J_1$ and $0 < T_n^m < T^*-t_n$ such that
\begin{align}\label{pigeonhole}
\sup_{1 \le j \le J_1}\|v_n^j\|_{S_\al([0, T_n^m])} = \|v_n^{j(m,n)}\|_{S_\al([0,T_n^m])} = m.
\end{align}
By using the pigeonhole principle and then reordering, we may assume that $j(m,n) = 1$ for infinitely many $m, n$. Then by Theorem \ref{gwp} there exists $0 \le \tau_n^m \le T_n^m$ such that
$$
\limsup_{m\to\infty}\limsup_{n\to\infty}\||\nabla|^\frac\al2 v_n^1(\tau_n^m)\|_{L^2} \ge \||\nabla|^\frac\al2 W_\al\|_{L^2}.
$$
For any $\ep > 0$ we can find $m_0 = m_0(\ep)$ such that
$$
\||\nabla|^\frac\al2 v_n^1(\tau_n^{m_0})\|_{L^2} \ge \||\nabla|^\frac\al2 W_\al\|_{L^2} - \ep\;\;\mbox{for infinitely many}\;\;n.
$$
\newcommand{\tno}{\tau_n^{m_0}}
\newcommand{\tnm}{\tau_n^-}
\newcommand{\tnp}{\tau_n^+}
Passing to a subsequence we may have that
\begin{align}\label{energy-ep}
\||\nabla|^\frac\al2 v_n^1(\tno)\|_{L^2} \ge \||\nabla|^\frac\al2 W_\al\|_{L^2} - \ep\;\;\mbox{for all}\;\;n \;\;\mbox{and}\;\;\lim_{n\to \infty}\||\nabla|^\frac\al2 v_n^1(\tno)\|_{L^2}\;\;\mbox{exists}.
\end{align}

Now we choose a small $\eta$ to be specified later and fix $n$. Then since $\|v_n^1\|_{S_\al([0, T_n^{m_0}])} = m_0$, we can find $\tnm, \tnp$ with $0 \le \tnm \le \tno \le \tnp\le  T_n^{m_0}$ such that
\begin{align}\label{s-eta}
\|v_n^1\|_{S_\al([\tnm, \tnp])} = \eta.
\end{align}
Using local well-posedness (Lemma \ref{loc}) we get
$$
\|U(t)v_n^1(\tno)\|_{S_\al([\tnm-\tno, \tnp-\tno])} \ge C\eta^{\widetilde{D}}
$$
for some dimension-dependent constant $\widetilde D$. By Lemma \ref{inverse str} there exists $\tnm - \tno \le s_n \le \tnp-\tno$ such that
\begin{align}\label{tightness0}
\int_{|x| \le \widetilde C|T^*-t_n'|^\frac1\al} |U(s_n)|\nabla|^\frac\al2 v_n^1(\tno)|^2\,dx \ge \widetilde C^{-1},
\end{align}
where $\widetilde C = \widetilde C(d, M, \eta)$ and $t_n' = t_n + s_n + \tno$.

From the definition of $v_n^1$ and \eqref{tightness0} we deduce that
$$
\int_{|y| \le \widetilde C(h_n^1)^{-1}|T^*-t_n'|^\frac1\al} |U(s_n(h_n^1)^{-\al})\big(|\nabla|^\frac\al2 v^1((h_n^1)^{-\al} \tno + t_n^1, y)\big)|^2\,dy \ge \widetilde C^{-1}.
$$
By applying Lemma \ref{tightness} and rescaling we have
\begin{align}\label{tightness1}
\Big|\||\nabla|^\frac\al2 v_n^1(\tno)\|_{L^2}^2 - \int_{|x| \le R_n} |U(s_n)v_n^1(\tno)|^2\,dx\Big| \to 0
\end{align}
for any sequence $R_n \in (0, \infty)$ such that $(T^*-t_n')^{-\frac1\al}R_n \to \infty$ as $n \to \infty$.  Let $u_n^J$ be the approximate functions defined on $[0, T_n^{m_0}]$ as above. Then in view of the proof of \eqref{bad} and \eqref{pigeonhole} we can deduce that
$$
\lim_{J \to \infty}\limsup_{n \to \infty}\||\nabla|^\frac\al2(u_n^J(s_n + \tno) - u(t_n'))\|_{L^2} = 0.
$$
\newcommand{\bl}{\big\langle}
\newcommand{\br}{\big\rangle}
Using \eqref{profileparaortho} and Corollary \ref{strong dec} we have
$$
\limsup_{n\to\infty}\bl |\nabla|^\frac\al2 u_n^J(s_n + \tno), |\nabla|^\frac\al2v_n^1(s_n +\tno) \br = \limsup_{n\to\infty}\||\nabla|^\frac\al2 v_n^1(s_n+\tno)\|_{L^2}^2
$$
for all $J \ge 1$. Thus we obtain
$$
\limsup_{n\to\infty} |\bl |\nabla|^\frac\al2 u_n(t_n'), |\nabla|^\frac\al2 v_n^1(s_n + \tno) \br| = \limsup_{n\to\infty} \||\nabla|^\frac\al2 v_n^1(s_n+\tno)\|_{L^2}^2.
$$
From \eqref{s-eta} and Strichartz estimate it follows that
$$
\||\nabla|^\frac\al2(v_n^1(s_n + \tno) - U(s_n)v_n^1(\tno))\|_{L^2} \le C\eta^{\mu-2}.
$$
So, if $\eta$ is sufficiently small, then we get
$$
\limsup_{n\to\infty}|\bl|\nabla|^\frac\al2 u_n(t_n'), U(s_n)|\nabla|^\frac\al2 v_n^1(\tno) \br| \ge \lim_{n\to\infty}\||\nabla|^\frac\al2 v_n^1(\tno)\|_{L^2}^2 - \eta^{D'},
$$
for some $D' < \mu-2$.
Therefore by Cauchy-Schwarz inequality and \eqref{energy-ep} we obtain that
\begin{align*}
\limsup_{n\to \infty}\int_{|x| \le R_n} ||\nabla|^\frac\al2 u(t_n')|^2\,dx &\ge \frac{(\lim_{n\to\infty}\||\nabla|^\frac\al2 v_n^1(\tno)\|_{L^2}^2 - \eta^{D'})^2}{\lim_{n\to\infty} \||\nabla|^\frac\al2 v_n^1(\tno)\|_{L^2}^2} \\
&\ge \||\nabla|^\frac\al2 W_\al\|_{L^2}^2 - \ep - 2\eta^{D'} + \eta^{2D'}/M^2.
\end{align*}
Since $\ep$ and $\eta$ can be taken arbitrarily small, we get the desired result.

\section{Proof of finite time Blowup}

Let us denote $\sup_{0 \le t < T^*}\||\nabla|^\frac\al2 u(t)\|_{L^2}$ by $M$ and $\|\varphi\|_{L^2}$ by $m$. We will show that $T^* = T^*(\varphi, M) < +\infty$. From the regularity persistence it follow that if $\varphi \in H^2$, then $u \in C([0, T^*); H^2)$ (this is the case for the power type since $\al < d < 3\al$ and thus $\frac{2\al}{d-\al} > 1$). Since the maximal existence time $T^* = T^*(\varphi)$ is lower semi-continuous, that is, if $\varphi_k \to \varphi$ in $H^\frac\al2$, then $T^*(\varphi) \le \liminf_{k\to \infty} T^*(\varphi_k)$, we may assume that $u \in C([0,T^*); H^{2})$ and $\varphi$ satisfies the condition \eqref{ass}.



\newcommand{\ald}{\mathcal A}

\newcommand{\blb}{\big[}
\newcommand{\brb}{\big]}
\newcommand{\re}{\mbox{Re}}
\newcommand{\mlf}{\mathbf{m}_{1,\ld}}
\newcommand{\mlft}{\widetilde{\mathbf{m}}_{1, \ld}}
\newcommand{\mls}{\mathbf{m}_{2, \ld}}
\newcommand{\im}{\mbox{Im}}

\subsection{Moment estimates}

\begin{proposition}\label{moment}
If $\varphi$ satisfies the condition \eqref{ass-mo}, then the solution $u \in C([0,T^*); H^2)$ satisfies that for each $t \in (0, T^*)$
\begin{align*}
\||x|u(t)\|_{L^2} \le CMt + \||x|\varphi\|_{L^2},\;\;\||x||\nabla|^{\al-1}u(t)\|_{L^2} + \||x|^2u(t)\|_{L^2} < +\infty.
\end{align*}
\end{proposition}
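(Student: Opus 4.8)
The plan is to run a weighted‑energy (virial‑type) argument, propagating from $t=0$ the finiteness and the growth of the weighted $L^2$‑norms by differential inequalities and closing with Gronwall. Every formal manipulation below is first carried out with a smooth truncated weight $w_R$ — equal to the relevant polynomial weight on $\{|x|\le R\}$, bounded by $CR^k$, with $x$‑derivatives of the correct homogeneity uniformly in $R$ — so that all quantities are finite and $C^1$ in $t$; one then derives $R$‑uniform bounds and lets $R\to\infty$ by monotone convergence. Throughout we use the a priori bound $M=\sup_{[0,T^*)}\||\nabla|^{\al/2}u\|_{L^2}$, mass conservation $\|u(t)\|_{L^2}=m$, the persistence $u\in C([0,T^*);H^2)$ (valid for the power type because $\al<d<3\al$ makes the nonlinearity sufficiently regular), and the exact commutator identity $[f(\nabla),x_j]=-i(\partial_{\xi_j}f)(\nabla)$ for Fourier multipliers.

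First I would treat $\||x|u(t)\|_{L^2}$. Differentiating $\langle|x|^2u,u\rangle$ and substituting $i\partial_tu=|\nabla|^\al u-V(u)u$, the nonlinear contribution is $\mathrm{Im}\langle|x|^2V(u)u,u\rangle=\mathrm{Im}\int|x|^2V(u)|u|^2\,dx=0$ because $V(u)$ is real, so $\tfrac{d}{dt}\||x|u\|_{L^2}^2=2\,\mathrm{Im}\langle|x|^2|\nabla|^\al u,u\rangle$. A standard manipulation using the self‑adjointness of $|\nabla|^\al$ reduces this to $\mp i\langle[|\nabla|^\al,|x|^2]u,u\rangle$; writing $[|\nabla|^\al,|x|^2]=\sum_j([|\nabla|^\al,x_j]x_j+x_j[|\nabla|^\al,x_j])$ and symmetrizing, the inner product equals $2i\sum_j\mathrm{Im}\langle[|\nabla|^\al,x_j]u,x_ju\rangle$, where $[|\nabla|^\al,x_j]$ has symbol $-i\al\xi_j|\xi|^{\al-2}$ and is therefore bounded from $\dot H^{\al-1}$ to $L^2$. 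Hence
\[
\Big|\tfrac{d}{dt}\||x|u(t)\|_{L^2}^2\Big|\le C\,\||\nabla|^{\al-1}u(t)\|_{L^2}\,\||x|u(t)\|_{L^2},
\]
and since $|\xi|^{\al-1}\le|\xi|^{\al/2}+1$ for $1<\al<2$ we get $\||\nabla|^{\al-1}u\|_{L^2}\le C(M+m)$, so $\tfrac{d}{dt}\||x|u\|_{L^2}\le CM$ (absorbing $m$) and, integrating from $0$, $\||x|u(t)\|_{L^2}\le CMt+\||x|\varphi\|_{L^2}$, which is finite by \eqref{ass-mo}.

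For the remaining two quantities I would run a coupled Gronwall argument for $Y(t):=\||x|^2u(t)\|_{L^2}^2+\||x|\langle\nabla\rangle u(t)\|_{L^2}^2$; the target $\||x||\nabla|^{\al-1}u\|_{L^2}$ is then controlled by $\||x|\langle\nabla\rangle u\|_{L^2}$ together with the first‑moment and mass bounds, because $|\nabla|^{\al-1}\langle\nabla\rangle^{-1}$ is a bounded multiplier whose commutator with $|x|$ is of order $\al-2$, and negative‑order pieces like $\||\nabla|^{\al-2}u\|_{L^2}$ are handled by the fractional Hardy inequality in the Fourier variable ($\||\nabla|^{-(2-\al)}u\|_{L^2}\lesssim\||x|^{2-\al}u\|_{L^2}$, $0<2-\al<1$) together with interpolation. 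Differentiating $\||x|^2u\|_{L^2}^2$, the nonlinearity again drops by reality, and the same symmetrized‑commutator computation gives $|\tfrac{d}{dt}\||x|^2u\|_{L^2}^2|\lesssim(\||x||\nabla|^{\al-1}u\|_{L^2}+\||\nabla|^{\al-2}u\|_{L^2})\||x|^2u\|_{L^2}\lesssim(Y^{1/2}+\text{l.o.})\,Y^{1/2}$. For $\||x|\langle\nabla\rangle u\|_{L^2}^2$ one applies $\langle\nabla\rangle$ to the equation (it commutes with $|\nabla|^\al$); the linear part is exactly as above and is $\lesssim(\|u\|_{H^2}+Y^{1/2})\,Y^{1/2}$, but now the nonlinear term does \emph{not} vanish — it reduces to $\mathrm{Im}\langle|x|^2[\langle\nabla\rangle,V(u)]u,\langle\nabla\rangle u\rangle$, which must be bounded by a weighted Kato--Ponce/Coifman--Meyer commutator estimate together with the Sobolev bounds on $V(u)$ coming from $M$ and from $u\in H^2$. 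Collecting, $Y'(t)\le C(t)(1+Y(t))$ with $C(t)$ locally bounded on $[0,T^*)$ (it depends on $\sup_{[0,t]}\|u\|_{H^2}$, which is finite for each $t<T^*$), and since $Y(0)<\infty$ by \eqref{ass-mo}, Gronwall yields $Y(t)<\infty$ for every $t\in(0,T^*)$.

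The main obstacle is precisely this nonlinear commutator term in the evolution of the derivative‑weighted quantity and the weighted product estimates it forces: one must distribute the polynomial weights $|x|,|x|^2$ — and, for the Hartree nonlinearity, the convolution kernel, via the pointwise bound $|x|^2(|x|^{-2\al}*|u|^2)(x)\le\int(|x-y|^2+|y|^2)|x-y|^{-2\al}|u(y)|^2\,dy$ — across the factors with Hölder exponents dictated by the ranges $\al<d<3\al$ (power type) and $d>2\al+2$ (Hartree), while keeping control of the negative‑order remainders through the spatial decay established in the previous step. A secondary but indispensable point is the rigorous justification of the formal identities — that the truncated quantities are $C^1$ in $t$ and that $R\to\infty$ is admissible — which forces the bootstrap to proceed in the right order: first $\||x|u\|_{L^2}$, then the coupled pair, reading off $\||x||\nabla|^{\al-1}u\|_{L^2}$ at the end.
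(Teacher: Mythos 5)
Your treatment of the first moment coincides with the paper's: truncate the weight, kill the nonlinear term by reality of $V(u)$, reduce to the commutator $[|\nabla|^\al,x_j]$ of order $\al-1$, and integrate; this part is fine. The gap is in the derivative-weighted moment. You correctly identify that the nonlinear contribution survives there and reduces to $\mathrm{Im}\langle|x|^2[\langle\nabla\rangle,V(u)]u,\langle\nabla\rangle u\rangle$, i.e.\ essentially to bounding $\||x|\big((\nabla V(u))u+V(u)\nabla u\big)\|_{L^2}$, but the tools you invoke to close it --- a ``weighted Kato--Ponce/Coifman--Meyer commutator estimate together with the Sobolev bounds on $V(u)$ coming from $M$ and from $u\in H^2$'' --- do not produce this bound. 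The obstruction is the weight sitting on the nonlinearity: for the power type one must control $|x|^\al V(u)=|x|^\al|u|^{\frac{2\al}{d-\al}}$ in $L^\infty$, and neither $u\in H^2$ (which does not embed into $L^\infty$ for $d\ge 4$, and $d$ ranges up to just below $3\al<6$ here) nor the $L^2$-weighted bound $\||x|u\|_{L^2}<\infty$ from the first step gives pointwise spatial decay of $u$. The paper closes this precisely with the radial Sobolev inequality \eqref{sobo-radial}, $|x|^{\frac{d-\al}{2}}|u(x)|\le C\||\nabla|^{\frac\al2}u\|_{L^2}$, which yields $\||x|^\al|u|^{\frac{2\al}{d-\al}}\|_{L^\infty}\le CM^{\frac{2\al}{d-\al}}$; for the Hartree type it uses the weighted fractional-integration bound \eqref{frac-int} to get $\||x|^{2\al}\nabla V(u)\|_{L^\infty}\lesssim\|u\|_{L^2}\|\nabla u\|_{L^2}$ and $\||x|^\al V(u)\|_{L^\infty}\lesssim M^2$. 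In addition, the paper factors $|\nabla|(V(u)u)=\nabla|\nabla|^{-1}\cdot\big((\nabla V(u))u+V(u)\nabla u\big)$ and uses the boundedness of the Riesz-type operator $\nabla|\nabla|^{-1}$ on $L^2(|x|^2dx)$ ($A_2$ weight) to move the weight past the nonlocal operator; your sketch has no substitute for this step either. Without these two ingredients the decisive estimate is simply asserted, not proved, so the argument does not close.

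A secondary remark: your Hardy-type handling of the negative-order remainder $\||\nabla|^{\al-2}u\|_{L^2}$ and the coupled Gronwall for $\||x|^2u\|_{L^2}^2+\||x|\langle\nabla\rangle u\|_{L^2}^2$ are workable and structurally parallel to the paper (which runs the moments sequentially, feeding $\||x||\nabla|^{\al-1}u\|_{L^2}$ into the estimate for $\||x|^2u\|_{L^2}$ via the identity $x|\nabla|^\al=|\nabla|^\al x+\al|\nabla|^{\al-2}\nabla$). Once the weighted nonlinear estimate above is supplied --- by the radial Sobolev inequality and \eqref{frac-int} --- the rest of your scheme goes through.
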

\begin{proof}
For a fixed radial bump function $\psi \in C_0^\infty$ with $\psi(x) = 1$ when $|x| \le 1$ and $\psi(x) = 0$ when $|x| \ge 2$ we denote $\psi(\frac{x}{\lambda})$ by $\pl$ for $\lambda \ge 1$. Then we can define moments $\mlf, \mlft, \mls$ by
\begin{align*}
&\mlf^2 : = \bl x\pl u ; x\pl u \br,\\
&\mlft^2 : = \bl x \pl|\nabla|u; x\pl |\nabla|u \br,\\
&\mls^2 := \bl |x|^2\pl u, |x|^2\pl u\br.
\end{align*}
Differentiating $\mlf^2$ w.r.t $t$, we have
\begin{align*}
\frac{d}{dt}\mlf^2 &= 2\im\bl |\nabla|^{-\frac\al2}x \pl (|\nabla|^\al u - V(u)u) ; |\nabla|^\frac\al2x \pl u\br = 2\im \bl |\nabla|^{-\frac\al2}x\pl |\nabla|^\al u ; |\nabla|^\frac\al2 x\pl u\br\\
&= 2 \sum_{j=1}^d\im \bl \blb x_j\pl, |\nabla|^\al \brb u, x_j\pl u\br \le 2\left(\sum_j\|\blb x_j\pl, |\nabla|^\al \brb u\|_{L^2}^2\right)^\frac12\mlf.
\end{align*}
In order to estimate the last term we use the following lemma.
\begin{lemma}\label{com-lem}
Let $\beta_\ld(x) = \beta(\frac{x}{\ld})$ for $\beta \in C_0^\infty$. If $s \ge 1$ for any $f \in H^{s-1}$ we have
$$
\|\blb \beta_\ld, |\nabla|^s \brb f\|_{L^2} \le C_\beta\ld^{-1}\|f\|_{H^{s-1}}.
$$
If $0 < s < 1$, then for any $f \in L^2$ we have
$$
\|\blb \beta_\ld, |\nabla|^s \brb f\|_{L^2} \le C_\beta\ld^{-s}\|f\|_{L^2}.
$$
\end{lemma}
From the above lemma it follows that
$$
\frac{d}{dt}\mlf^2 \le 2\ld\left(\sum_j\|\blb \frac{x_j}\ld\pl, |\nabla|^\al \brb u\|_{L^2}^2\right)^\frac12\mlf \le C\|u\|_{H^{\al-1}}\mlf \le CM\mlf
$$
and thus $\frac{d}{dt}\mlf \le CM$. Integrating over $[0, t]$, we have
$$
\mlf(t) \le CMt + \||x|\pl \varphi\|_{L^2}.
$$
Letting $\ld \to +\infty$, by Fatou's lemma we get the desired result.

Next we estimate $\mlft$ as follows.
\begin{align*}
\frac{d}{dt}\mlft^2 &= 2\im \bl |\nabla|^{-\frac\al2} x\pl |\nabla|(|\nabla|^\al - V(u)u) ; |\nabla|^\frac\al2x \pl |\nabla|u\br\\
&= 2\im \bl |\nabla|^{-\frac\al2}x\pl |\nabla|^{\al} |\nabla|u ; |\nabla|^\frac\al2 x\pl |\nabla| u\br - 2\im \bl x\pl |\nabla|(V(u)u) ; x\pl |\nabla|u\br\\
&= 2\sum_j\im \bl \blb x_j \pl, |\nabla|^\al \brb |\nabla|u, x_j\pl |\nabla|u\br\\
& \quad + 2\sum_j\im \bl x_j\pl \nabla|\nabla|^{-1}\cdot((\nabla V(u))u + V(u)\nabla u), x_j\pl |\nabla|u\br\\
&\le C\|u\|_{H^\al}\mlft + C\||x|((\nabla V(u))u + V(u)\nabla u)\|_{L^2}\mlft.
\end{align*}
For the last term we used the weight estimate of the singular integral operator $\nabla|\nabla|^{-1}$ with $A_2$-weight $|x|$.

If $V(u) = |u|^\frac{2\al}{d-\al}$, then by Sobolev inequality \eqref{sobo-radial} we have
\begin{align*}
\||x|((\nabla V(u))u + V(u)\nabla u)\|_{L^2} &\le C\||u|^\frac{2\al}{d-\al}|\nabla u|\|_{L^2} + \||x|^\al |u|^\frac{2\al}{d-\al}|\nabla u|\|_{L^2}\\
&\le C\||\nabla|^\frac\al2 u\|_{L^2}^\frac{2\al}{d-\al}\|u\|_{H^{1+\frac\al2}} + \||\nabla|^\frac\al2 u\|_{L^2}^\frac{2\al}{d-\al}\||\nabla|u\|_{L^2}
\end{align*}
and thus by integrating over $[0, t]$
$$
\mlft(t) \le \||x||\nabla|\varphi\|_{L^2} + C(1+M)^\frac{2\al}{d-\al}\int_0^t \|u(t')\|_{H^{1+\frac\al2}}\,dt'.
$$

If $V(u) = |x|^{-2\al}*|u|^2$, then from the fractional integration for radial function that \begin{align}\label{frac-int} |x|^\delta(|x|^{-\gamma}*|f|) \le C\||x|^{\delta-\gamma}f\|_{L^1}\;\; (0 <\delta \le \gamma < d-1)\end{align} we get
\begin{align*}
\||x||\nabla V(u)|u\|_{L^2} &\le \||\nabla V(u)|u\|_{L^2} + \||x|^{2\al}|\nabla V(u)|u\|_{L^2}\\
& \le \|\nabla V(u)\|_{L^\frac{2d}{\al}}\|u\|_{L^\frac{2d}{d-\al}} + C\|u\|_{L^2}^2\|\nabla u\|_{L^2}\\
&\le C\|u|\nabla u|\|_{L^\frac{2d}{2d-3\al}}\||\nabla|^\frac\al2 u\|_{L^2} + C\|\varphi\|_{L^2}^2\|\nabla u\|_{L^2}\\
&\le C\|u\|_{H^{1+\frac\al2}}^3
\end{align*}
and
\begin{align*}
\||x| V(u)|\nabla u|\|_{L^2} \le \|V(u)\|_{L^\infty}\|\nabla u\|_{L^2} + \|u\|_{L^2}^2\|\nabla u\|_{L^2} \le C\|u\|_{H^\al}^3.
\end{align*}
Thus $$\mlft(t) \le \||x||\nabla|\varphi\|_{L^2} + C\int_0^t \|u(t')\|_{H^{1+\frac\al2}}^3\,dt'.$$
Fatou's lemma yields the desired results.

Similarly to the estimate of $\mlf$ we have for $\mls$ that
\begin{align*}
\frac{d}{dt}\mls &= 2\im \bl |x|^2\pl (|\nabla|^\al u - V(u)u), |x|^2\pl u\br\\
&= 2\im \bl |x|^2\pl |\nabla|^\al u, |x|^2 \pl u\br = 2\im \bl \pl x \cdot (|\nabla|^\al x + \al |\nabla|^{\al-2}\nabla) u, |x|^2 \pl u\br\\
&= 2\im \bl x\pl  \cdot |\nabla |^\al x u,  |x|^2 \pl u\br + 2\al \im \bl \pl x \cdot \nabla/|\nabla||\nabla|^{\al-1} u, |x|^2 \pl u\br\\
&= 2\sum_{j}\im \bl \blb x_j\pl, |\nabla|^\al\brb x_ju, |x|^2 \pl u \br + 2\al \im \bl \pl x \cdot \nabla/|\nabla||\nabla|^{\al-1} u, |x|^2 \pl u\br.
\end{align*}
Lemma \ref{com-lem} shows that
$$
\frac{d}{dt}\mls \le C(\||x|u\|_{H^{\al-1}} + \||x||\nabla|^{\al-1}u\|_{L^2})\mls \le C(\|u\|_{H^{\al-1}} +  \||x||\nabla|^{\al-1}u\|_{L^2})\mls,
$$
which implies that $$\||x|^2u(t)\|_{L^2} \le \||x|^2\varphi\|_{L^2} + C\int_0^t (\|u\|_{H^{\al-1}} +  \||x||\nabla|^{\al-1}u\|_{L^2})\,dt'.$$
This completes the proof of Proposition \ref{moment}.

\end{proof}
\begin{proof}[Proof of Lemma \ref{com-lem}]
We show the first inequality. By Plancherel's theorem it suffices to show that $\|Tg\|_{L^2} \le C\|(1+ |\zeta|^{s-1})g\|_{L^2}$, where
$$
Tg(\xi) = \ld^d\int \widehat\beta(\ld(\xi-\zeta))(|\zeta|^s - |\xi|^s)g(\zeta)\,d\zeta.
$$
In fact,
\begin{align*}
|Tg(\xi)| &\le s\ld^d \int |\widehat \beta(\ld (\xi-\zeta))|(|\xi|^{s-1} + |\zeta|^{s-1})|\xi-\zeta||g(\zeta)|\,d\zeta\\
&\le s\ld^d \int |\widehat \beta(\ld (\xi-\zeta))||\xi-\zeta|^{s}|g(\zeta)|\,d\zeta + 2s\ld^d \int |\widehat \beta(\ld (\xi-\zeta))||\zeta|^{s-1}|\xi-\zeta||g(\zeta)|\,d\zeta\\
&= \ld^{-s}s\ld^d \int |\widehat \beta(\ld (\xi-\zeta))||\ld(\xi-\zeta)|^{s}|g(\zeta)|\,d\zeta\\
 &\quad + 2s\ld^d \int |\widehat \beta(\ld (\xi-\zeta))|\ld(\xi-\zeta)|||\zeta|^{s-1}g(\zeta)|\,d\zeta.
\end{align*}
Since $\widehat \beta(\xi)(|\xi|^\al + |\xi|)$ is integrable and $s > 1$, we get
$$
\|Tg\|_{L^2} \le C_\beta\ld^{-1}\|(1+|\zeta|^{s-1})g\|_{L^2}.
$$
Similarly for $0 < s < 1$ we have
$$
|Tg(\xi)| \le \ld^d \ld^{-s} \int |\widehat \beta(\ld (\xi-\zeta))|\ld(\xi-\zeta)|^{s-1}|g(\zeta)|\,d\zeta
$$
and thus
$$
\|Tg\|_{L^2} \le C_\beta\ld^{-s}\|g\|_{L^2}.
$$
This completes the proof of lemma.
\end{proof}

\subsection{Virial argument}
Here we consider the virial inequality through the moment estimates above.
Let us define two quantities associated with dilation and virial operators respectively by
$$
\mathcal A(u) := -{\rm Im}\big< u, x\cdot \nabla u \big> , \qquad \mathcal M(v) := \big<|\nabla|^{1-\frac\al2}(x u)\,;\, |\nabla|^{1-\frac\al2}(x u)\big>.
$$
From the regularity and moment estimates we can differentiate them w.r.t time.
\begin{align}\label{commut0}
\frac{d}{dt}\ald(u(t)) = {\rm Re}\big\langle (|\nabla|^\al u - V(u)u),\, x\cdot \nabla  u \big\rangle - {\rm Re}\bl  u,\, x\cdot \nabla (|\nabla|^\al u - V(u)u) \br.
\end{align}
By integration by parts, we have
\begin{align*}
\frac{d}{dt}\ald(u(t)) &= {\rm Re}\bl x |\nabla|^\al  u\,;  \nabla u \br  + d \bl u, V(u) u \br - {\rm Re}\bl u, x\cdot \nabla ( |\nabla|^\al u) \br\\
&\qquad + 2\re \bl u, x \cdot \nabla (V(u) u) \br.
\end{align*}

Using the identity $x |\nabla|^\beta = |\nabla|^\beta x + \beta |\nabla|^{\beta-2}\nabla $ for $0 < \beta < 2$, we have
\begin{align}\begin{aligned}\label{diff}
\frac{d}{dt}\ald(u(t)) &= \al \bl  u, |\nabla|^\al  u \br + 2 \re \bl  u, x\cdot \nabla (V(u) u) \br + d \bl  u, V(u) u \br.
\end{aligned}\end{align}

We first consider the power type. If $V(u) = |u|^\frac{2\al}{d-\al}$, then by direct calculation we get that
\begin{align*}
&2 \re \bl u, x\cdot \nabla (V(u) u) \br = - (d+\al) \re \bl  u , V(u)  u \br.
\end{align*}
Plugging this into \eqref{diff}, we have
\begin{align}\label{commut1}
\frac{d}{dt}\ald(u(t)) = \al(\blb \bl  u, |\nabla|^\al  u \br -  \bl  u, V(u) u \br\brb).
\end{align}

Now we consider the Hartree case $V(u) = |x|^{-2\al}*|u|^2$. Using integration by parts, we also get
$$
2\re\bl  u, x\cdot \nabla(V(u)  u)\br = \re \bl  u, (x\cdot \nabla V(u)) u\br - d \bl  u, V(u) u \br.
$$
Since $d > 2\al + 1$, by direct differentiation we have
\begin{align*}
\re \bl  u, (x\cdot \nabla V(u)) u\br = - 2\al\bl  u, V(u)  u\br - 2\al\int\!\!\int  |u(x)|^2 |x-y|^{-2\al-1}y\cdot \frac{x-y}{|x-y|}|u(y)|^2\,dxdy.
 \end{align*}
In fact, from change of variables we deduce that
\begin{align*}
2\al\int\!\!\int |u(x)|^2 |x-y|^{-2\al-1}y\cdot \frac{x-y}{|x-y|}|u(y)|^2\,dxdy = \re\bl  u, (x\cdot \nabla V(u))  u\br .
\end{align*}
So, we have
\begin{align*}
\re \bl  u, (x\cdot \nabla V(u)) u\br = - \al\bl  u, V(u)  u\br.
 \end{align*}

Putting all together, we finally have
\begin{align}\label{commut2}
\frac{d}{dt}\ald(u(t)) = \al ( \blb\bl  u, |\nabla|^\al  u \br -  \bl  u, V(u) u \br \brb).
\end{align}

To deal with the RHS of \eqref{commut1} and \eqref{commut2} we introduce the following lemma to be shown in appendix.
\begin{lemma}\label{moment}
If $E(\varphi) \le (1-\delta_0) E(W_\al)$ and $\||\nabla|^\frac\al2 \varphi\|_{L^2} \ge \||\nabla|^\frac\al2 W_\al\|_{L^2}$ for some $0 < \delta_0 < 1$, then there exists a positive $\overline \delta$ such that $\||\nabla|^\frac\al2 u(t)\|_{L^2}^2 \ge (1 + \overline \delta)\||\nabla|^\frac\al2 W_\al\|_{L^2}^2$ for all $t \in (0, T^*)$.
\end{lemma}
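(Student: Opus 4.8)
The plan is to run the standard variational argument of Kenig--Merle \cite{keme}, combining the sharp Sobolev inequality \eqref{bc} with energy conservation and a continuity/connectedness argument in time.

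Write $W_0 := \||\nabla|^\frac\al2 W_\al\|_{L^2}^2 = C_{d,\al}^{-2/(\mu-2)}$ and $y(t) := \||\nabla|^\frac\al2 u(t)\|_{L^2}^2$; since $u \in C([0,T^*); \dot H^\frac\al2)$, the function $y$ is continuous on $[0,T^*)$. First I would record the lower bound for the potential energy coming from \eqref{bc},
\[
\mathcal V(u(t)) = -\frac1\mu \int V(u)|u|^2\,dx \ge -\frac{C_{d,\al}}{\mu}\, y(t)^{\mu/2},
\]
so that, by energy conservation,
\[
E(\varphi) = \mathcal K(u(t)) + \mathcal V(u(t)) \ge f(y(t)), \qquad f(y) := \tfrac12 y - \tfrac{C_{d,\al}}{\mu}\, y^{\mu/2} .
\]

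Next I would analyze the scalar function $f$ on $[0,\infty)$. Since $\mu>2$ in both cases ($\mu=\tfrac{2d}{d-\al}>2$ for power type, $\mu=4$ for Hartree type), we have $f(0)=0$, $f(y)\to-\infty$ as $y\to\infty$, and $f'(y)=\tfrac12\big(1-C_{d,\al}\,y^{(\mu-2)/2}\big)$ vanishes only at $y=W_0$; hence $f$ is strictly increasing on $[0,W_0]$ and strictly decreasing on $[W_0,\infty)$, with maximum value $f(W_0)=\tfrac{\mu-2}{2\mu}W_0$. Using that $W_\al$ solves \eqref{ell eqn}, so that $\||\nabla|^\frac\al2 W_\al\|_{L^2}^2=\int V(W_\al)|W_\al|^2\,dx$, one checks $E(W_\al)=\mathcal K(W_\al)+\mathcal V(W_\al)=\big(\tfrac12-\tfrac1\mu\big)W_0=f(W_0)$. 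Consequently, because $E(W_\al)>0$ and $0<(1-\delta_0)E(W_\al)<E(W_\al)$, the sublevel set $\{y\ge 0: f(y)\le (1-\delta_0)E(W_\al)\}$ equals $[0,a]\cup[b,\infty)$ for unique $a,b$ with $0<a<W_0<b<\infty$, the two pieces being disjoint and closed.

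Finally I would conclude by connectedness. For every $t\in[0,T^*)$ we have $f(y(t))\le E(\varphi)\le(1-\delta_0)E(W_\al)$, so $y(t)\in[0,a]\cup[b,\infty)$; since $y$ is continuous and $[0,T^*)$ is connected, $y([0,T^*))$ lies entirely in one of the two components. The hypothesis $y(0)\ge W_0>a$ then forces $y([0,T^*))\subset[b,\infty)$, i.e.\ $\||\nabla|^\frac\al2 u(t)\|_{L^2}^2\ge b$ for all $t$, and $\overline\delta:=b/W_0-1>0$ gives the claim. I do not expect a real obstacle: the only delicate points are identifying $f(W_0)$ with $E(W_\al)$ through the Pohozaev identity encoded in \eqref{ell eqn}, and invoking the sharp constant $C_{d,\al}^{-2/(\mu-2)}=\||\nabla|^\frac\al2 W_\al\|_{L^2}^2$, which for the Hartree nonlinearity relies on the best-constant result proved in the appendix.
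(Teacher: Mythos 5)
Your proof is correct and is precisely the standard Kenig--Merle variational trapping argument the paper intends; note the paper defers this lemma's proof to the appendix but never actually supplies it there, so your argument fills that gap. The key points all check out: $f(W_0)=\tfrac{\mu-2}{2\mu}W_0=E(W_\al)$ via the Pohozaev identity $\||\nabla|^{\al/2}W_\al\|_{L^2}^2=\int V(W_\al)|W_\al|^2\,dx$, the sharp constant identification $W_0=C_{d,\al}^{-2/(\mu-2)}$, and the continuity-plus-connectedness step, which yields a $\overline\delta$ depending only on $\delta_0$, $d$ and $\al$.
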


From Proposition \ref{trap} it follows that
\begin{align*}
\bl  u, |\nabla|^\al  u \br -  \bl  u, V(u) u \br &= \mu\left(\frac12\||\nabla|^\frac\al2 u\|_{L^2}^2 - \frac1{\mu}\int V(u)|u|^2\,dx \right) - \frac{\mu-2}2\||\nabla|^\frac\al2 u\|_{L^2}^2\\
& = \mu E(\varphi) - \frac{\mu-2}2\||\nabla|^\frac\al2 u\|_{L^2}^2) \le \mu E(W_\al) - \frac{\mu-2}2(1 + \overline \delta)\||\nabla|^\frac\al2 W_\al\|_{L^2}^2\\
 &= - \frac{\mu-2}2 \,\overline\delta \,C_{d, \al}^{-\frac2{\mu-2}} =: -\epsilon_0 < 0.
\end{align*}
Thus integrating \eqref{commut1} and \eqref{commut2} over $[0,t]$ we get
\begin{align}\label{dilation}
\mathcal A(u(t)) \le  \mathcal A(\varphi) - \al\epsilon_0 t.
\end{align}

On the other hand by differentiating $\mathcal M$ and using the identity $x |\nabla|^\beta = |\nabla|^\beta x + \beta |\nabla|^{\beta-2}\nabla $ for $\beta = \al$ and $2-\al$, we get
\begin{align*}
\frac{d}{dt}\mathcal M(u) &= 2 \im \bl |\nabla |^{1-\al}x(|\nabla |^\al u - V(u)u) ; |\nabla |x u\br\\
&= 2\im \bl |\nabla |^{1-\al} x |\nabla |^\al u ; |\nabla |x u \br - 2\im \bl x V(u)u ; |\nabla|^{2-\al}x u\br\\
&=  - 2\al\im \bl u, x\cdot \nabla u \br - 2\im \bl |x|^2 V(u)u ; |\nabla |^{2-\al}u\br - 2(2-\al)\im \bl x V(u)u ; |\nabla |^{-\al}\nabla u \br\\
&= 2\al \mathcal A(u) - 2\im \bl |x|^2 V(u)u , |\nabla |^{2-\al}u\br - 2(2-\al)\im \bl x V(u)u ; |\nabla |^{-\al}\nabla u \br.
\end{align*}
Since $|x|^2u \in L^2$, $V(u) \in L^\infty$ and $u \in H^{1 + \frac\al2}$, the second term of last line is at least well-defined.
Actually, it is possible to get a better estimate as below.

If $V(u) = |u|^\frac{2\al}{d-\al}$, then since $\frac 43 \le \al < 2$ we have
\begin{align*}
- 2\im \bl |x|^2 V(u)u ; |\nabla |^{2-\al}u\br &\le 2\||x|^\al V(u)\|_{L^\infty}\||x|^{2-\al}u\|_{L^2}\||\nabla|^{2-\al}u\|_{L^2}\\
&\le Cm^{\frac{\al^2 + 2\al -4}{\al}}M^{\frac{2\al}{d-\al}+\frac{4-2\al}{\al}}\||x|u\|_{L^2}^{2-\al}.
\end{align*}
From Lemma \ref{com-lem} it follows that
$$
- 2\im \bl |x|^2 V(u)u ; |\nabla |^{2-\al}u\br \le C m^{\frac{\al^2 + 2\al -4}{\al}}M^{\frac{2\al}{d-\al}+\frac{4-2\al}{\al}}(Mt + m_1)^{2-\al},
$$
where $m_1 = \||x|\varphi\|_{L^2}$.

On the other hand, the last term is bounded by
\begin{align}\label{sec}
C\int |\nabla|^{-(\al-1)}(|\cdot|^{-(\al-1)}g)(x)f(x)\,dx = C\int\!\!\int \frac{f(x)g(y)}{|x-y|^{-(d-(\al-1))}|y|^{\al-1}}\,dxdy,
\end{align}
where $f = |\nabla/|\nabla| u|$ and $g = x |x|^{\al-1} V(u)u$. For this we use Stein-Weiss inequality that
\begin{align}\label{s-t}
\left|\int\!\int \frac{f(x)\overline g(y)}{|x|^{\theta_1}|x-y|^{\theta}|y|^{\theta_2}}\,dxdy\right| \le C\|f\|_{L^{p_1}}\|g\|_{L^{p_2}},
\end{align}
provided that $1 < p_1, p_2 < \infty$, $\theta_1 + \theta_2 \ge 0$, $0 < \theta < d$, $\frac1{p_1} + \frac1{p_2} + \frac{\theta+\theta_1+\theta_2}{d} = 2$ and $\theta_1 < \frac d{p_1'}$, $\theta_2 < \frac{d}{p_2'}$.
Let $p_1 = p_2 = 2$ and $\theta_1 = 0, \theta_2 = \al-1, \theta = d-(\al-1)$. Then \eqref{s-t} implies that
$$
\eqref{sec} \le Cm^2M^\frac{2\al}{d-\al}.
$$
These estimates lead us to
\begin{align}\label{virial-p}
\mathcal M(u(t)) \le  -\al^2 \epsilon_0t^2 + \left(C(m, M)(Mt + m_1)^{3-\al} + (Cm^2M^\frac{2\al}{d-\al} + \mathcal A(\varphi))t\right) + \mathcal M(\varphi).
\end{align}

We then consider the Hartree case. We follow the same strategy as in \cite{chkl}. To begin with
let us observe that
$$
2\im \bl |x|^2 V(u)u , |\nabla |^{2-\al}u\br = \im \bl \blb |\nabla|^{2-\al}, g\brb u, u \br,
$$
where $g = |x|^2V(u)$. Then by the commutator estimate of \cite{chkl} one can get
$$
\|\blb |\nabla|^{2-\al}, g\brb u\|_{L^2} \le C m \sup_{x\neq y}\frac{|g(x) - g(y)|}{|x-y|^{2-\al}}.
$$
If $x\neq y$, then
$$
|g(x) - g(y)| \le |x-y|\int_0^1 |\nabla g(z_s)|\,ds, \;\;z_s = x + s(y-x).
$$
Since $|\nabla g(z_s)| \le 2|z_s|V(u) + 2\al |z_s|^2\int |z_s-y|^{-(2\al+1)}|u(y)|^2\,dy$ and $d > 2\al+2$, by \eqref{frac-int} and Hardy-Sobolev inequality we have
$$
|\nabla g(z_s)| \le C|z_s|^{1-\al}(\||x|^{-2\al+\al}|u|^2\|_{L^1} + \||x|^{-2\al-1+1+\al}|u|^2\|_{L^1}) \le C|z_s|^{1-\al}\||\nabla|^\frac\al2 u\|_{L^2}^2 \le CM^2|z_s|^{1-\al}.
$$
Thus $|g(x) - g(y)| \le C|x-y|^{2-\al}M^2$, which implies that
$$
|\im \bl |x|^2 V(u)u , |\nabla |^{2-\al}u\br| \le CM^2m^2.
$$
Moreover, since by \eqref{frac-int} $|x|^\al V(u) \le C\||x|^{-\al}|u|^2\|_{L^2} \le CM^2$, from \eqref{sec} and \eqref{s-t} we have
$$
|\im \bl x V(u)u ; |\nabla |^{-\al}\nabla u \br| \le CM^2m^2.
$$
Therefore we get
\begin{align}\label{virial-h}
\mathcal M(u(t)) \le -\al^2\epsilon_0 t^2 + (Cm^2M^2 + \mathcal A(\varphi)) t + \mathcal M(\varphi).
\end{align}
Since $\mathcal M(u)$ is non-negative, by \eqref{virial-p} and \eqref{virial-h} we deduce that $T^* < +\infty$.

\section{Appendix}
We consider the characterization of maximizer of \eqref{bc} only for Hartree equation. For this we study a minimization problem:
\begin{align}\label{min1}
m = \inf_{u\in H^\frac\al2,\;\; \int V(u)|u|^2dx \neq 0} I(u), \quad I(u) := \frac{\||\nabla|^\frac\al2 u\|_{L^2}^4}{\int V(u)|u|^2dx}.
\end{align}
This is equivalent to the constrained minimization problem:
\begin{align}\label{min2}
m = \inf_{u\in H^\frac\al2,\;\; \int V(u)|u|^2dx = 1} J(u),\quad J(u):=\||\nabla|^\frac\al2 u\|_{L^2}^4.
\end{align}
By Sobolev embedding one can observe that $m > 0$. Suppose that $\underline u \in H^\frac\al2$ is a minimizer of \eqref{min2}. Then since $J$ is Fr\'{e}chet differentiable on $H^\frac\al2$, for any $\phi \in C_0^\infty$
$J$ should satisfy that
$$
\frac{d}{d\ep}J(v_\ep)\Big|_{\ep=0} = 0, \;\;\mbox{where}\;\; v_\ep = \frac {\underline u + \ep \phi}{\left(\int V(\underline u + \ep \phi)|\underline u + \ep \phi|^2\,dx\right)^\frac14}.
$$
By direct calculation we conclude that
$$
\bl |\nabla|^\al \underline u - m^\frac12 V(\underline u)\underline u,  \phi\br = 0.
$$
which means $\underline u$ is a solution to $|\nabla|^\al w - m^\frac12V(w)w$. By using a change of variables it is also a solution to \eqref{ell eqn}. Thus the minimizer $\underline u$ is $e^{i\theta}\lambda^\frac{d-\al}{2}W_\al(\lambda(x-x_0))$. Here we note that $W_\al \in H^\frac\al2$ because $d > 2\al$. Now it remains to show that $J$ attains $m$ in $H^\frac\al2$.
In fact, the minimizer can be found in $H^\frac\al2_{rad}$. Choose a minimizing sequence $u_j \in H^\frac\al2_{rad}$ with $\int V(u_j)|u_j|^2\,dx = 1$. Then it is bounded in $H^\frac\al2_{rad}$ and thus we can take a subsequence converging weakly to $\underline u$. According to Lemma 5.2 of \cite{dass} $\int V(u_j)|u_j|^2\,dx \to \int V(u)|u|^2\,dx$ due to the radial symmetry, which implies that $\int V(\underline u)|\underline u|^2\,dx = 1$. By the lower semi-continuity we deduce that $m \le \||\nabla|^\frac\al2 \underline u\|_{L^2}^4 \le \liminf_{j \to \infty}\||\nabla|^\frac\al2 u_j\|_{L^2}^4 = m$. Therefore $\underline u$ is a minimizer.

\end{document}